\newtheorem{theorem}{Theorem}[section]
\newtheorem{corollary}[theorem]{Corollary}
\newtheorem{proposition}[theorem]{Proposition}
\newtheorem{lemma}[theorem]{Lemma}
\theoremstyle{definition}    
\newtheorem{definition}[theorem]{Definition}
\theoremstyle{remark}
\newtheorem{remark}[theorem]{Remark}
\newtheorem{example}[theorem]{Example}
\newcommand\A{\mathcal{A}}
\newcommand\M{\mathcal{M}}
\newcommand\G{\mathcal{G}}
\renewcommand{\L}{\mathcal{L}}
\newcommand{\T}{\mathcal{T}}
\newcommand{\ca}{\mathcal}
\newcommand{\N}{\mathbb{N}}
\newcommand{\R}{\mathbb{R}}
\newcommand{\Z}{\mathbb{Z}}
\newcommand\pt{\on{pt}}
\renewcommand{\P}{\ca{P}}
\newcommand{\FF}{{\mathsf{F}}}
\newcommand\lie[1]{\mathfrak{#1}}
\renewcommand{\k}{\lie{k}}
\newcommand{\g}{\lie{g}}
\renewcommand{\a}{\mathsf{a}}
\newcommand{\on}{\operatorname}
\newcommand{\Aut}{ \on{Aut} } 
\newcommand{\Gau}{ \on{Gau} }
 \newcommand{\gau}{ \mf{gau} }
\newcommand{\Ad}{ \on{Ad} }
\newcommand{\Hom}{ \on{Hom}}
\renewcommand{\ker}{ \on{ker}}
\newcommand{\SO}{ \on{SO}}
\newcommand{\Mult}{  \on{Mult}}
\newcommand\qu{/\kern-.7ex/} % Categorical quotients
\newcommand{\lra}{\longrightarrow}
\newcommand{\hra}{\hookrightarrow}
\renewcommand{\d}{{\mathsf{d}}}
\newcommand{\f}{\frac}
\newcommand{\p}{\partial}
\renewcommand{\l}{\langle}
\renewcommand{\r}{\rangle}
\newcommand\hh{{\f{1}{2}}}
\newcommand{\ti}{\tilde}
\newcommand{\eeq}{\end{eqnarray*}}
\newcommand{\beq}{\begin{eqnarray*}}
\newcommand{\D}{\ca{D}}
\newcommand{\pr}{\on{pr}}
\newcommand{\wh}{\widehat}
\newcommand{\wt}{\widetilde}
\newcommand{\mf}{\mathfrak}
\newcommand{\rra}{\rightrightarrows}
\newcommand{\Vect}{\on{Vect}}
\newcommand{\Diff}{\on{Diff}}
\newcommand{\PSL}{\on{PSL}}
\newcommand{\SL}{\on{SL}}
\newcommand{\RP}{\R\!\on{P}}
\renewcommand{\S}{\ca{S}}
\newcommand{\vir}{\mf{vir}}
\newcommand{\CC}{\mathsf{C}}
\newcommand{\PP}{\mathsf{P}}
\newcommand{\ignore}[1]{}
\newcommand{\sz}{\mathsf{s}}
\newcommand{\tz}{\mathsf{t}}
\renewcommand{\subset}{\subseteq}
\newcommand{\DC}{\ca{D}(\CC)}
\newcommand{\op}{{\on{op}}}
\begin{document}
\sloppy

\title{On the coadjoint Virasoro action}
\author{Anton Alekseev}
\author{Eckhard Meinrenken}

\begin{abstract}	
The set of coadjoint orbits of the Virasoro algebra at level 1 is in bijection with the set of conjugacy classes in a certain open subset $\wt{\SL}(2,\R)_+$ of the universal cover of $\SL(2,\R)$. We strengthen this bijection to a Morita equivalence of quasi-symplectic groupoids, integrating the Poisson structure on $\vir^*_\mathsf{1}(S^1)$ and the Cartan-Dirac structure on  $\wt{\SL}(2,\R)_+$, respectively.
\end{abstract}

\maketitle
\tableofcontents

\section{Introduction}
Let $\CC$ be an unparametrized circle: a compact, connected, oriented 1-manifold. The Virasoro Lie algebra $\vir(\CC)$ is the canonical central extension of the Lie algebra $\Vect(\CC)$ of vector fields. Its smooth dual at level $1$, denoted $\vir^*_1(\CC)$, has a geometric interpretation as the space of projective structures on $\CC$, and also as a space of Hill operators on $\CC$. 
The group $\Diff_+(\CC)$ of orientation preserving diffeomorphisms acts on   $\vir^*_1(\CC)$  by affine-linear transformations;
we shall refer to this action as the  \emph{coadjoint Virasoro action}.   
   
 A classification of the coadjoint Virasoro orbits was achieved in  the work of  Kirillov \cite{kir:orb},  Lazutkin-Pankratova \cite{laz:nor}, Goldman \cite{gol:the}, Segal \cite{seg:uni}, and Witten \cite{wit:coa}. In the approach of \cite{gol:the,seg:uni}, this classification is described in terms of a diagram 
\begin{equation}\label{eq:corr0}
\xymatrix{ & \DC\ar[dl]\ar[dr] & \\
	\vir^*_1(\CC) & & \wt{\SL}(2,\R)_+
}\end{equation}
Here $\DC$ is the space of \emph{developing maps} for projective structures on $\CC$. A developing map is an orientation-preserving immersion
\[ \gamma\colon \wt{\CC}\to \RP(1)\]
of the simply connected cover of $\CC$, such that $\gamma$ is \emph{quasi-periodic}, with monodromy given by the action of an element of $\PSL(2,\R)$. The right arrow takes $\gamma$ to its \emph{lifted} monodromy in the universal cover, and has as its image a certain open subset indicated by a subscript `$+$'. The left arrow takes $\gamma$ to the corresponding projective structure. The space $\DC$ has natural commuting actions of $\PSL(2,\R)$ and of the universal cover of $\Diff_+(\CC)$, and the two arrows are the quotient maps for these two actions.  Consequently,  the diagram sets up a bijection between the  coadjoint orbits in $\vir^*_1(\CC)$ and conjugacy classes in $\wt{\SL}(2,\R)_+$. 

In this article, we show that this correspondence of group actions extends canonically to a correspondence between the Poisson structure on $\vir^*_1(\CC)$ and the Cartan-Dirac structure on $\wt{\SL}(2,\R)_+$. More precisely, we construct a Morita equivalence, in the sense of Xu \cite{xu:mom}: 
\begin{equation}\label{eq:corr02}
\xymatrix{ (\G_1,\omega_1) \ar[d]<2pt>\ar@<-2pt>[d]& (\DC,\varpi_\D)\ar[dl]\ar[dr] & (\G_2,\omega_2) \ar[d]<2pt>\ar@<-2pt>[d]\\
	\vir^*_1(\CC) & & \wt{\SL}(2,\R)_+
}\end{equation}
In this diagram, $(\G_1,\omega_1)$ is a symplectic groupoid integrating the Poisson structure on $\vir^*_1(\CC)$, and 
$(\G_2,\omega_2)$ is a quasi-symplectic groupoid integrating the Cartan-Dirac structure on $\wt{\SL}(2,\R)_+$. 
The groupoid  $\G_2$ is the action groupoid for the conjugation action of $\PSL(2,\R)$. On the other hand,  $\G_1$ is \emph{not} simply an action 
groupoid  -- rather, it is obtained as a \emph{quotient} of an action groupoid.   The Morita equivalence between these quasi-symplectic groupoids has the space of developing maps as its Hilsum-Skandalis bimodule, and comes with a distinguished bi-invariant  
2-form 
\[ \varpi_\D\in \Omega^2(\DC).\]
For $\CC=S^1$, writing quasi-periodic paths as $\gamma=(\sin\phi:\cos\phi)$
with $\phi\colon \R\to \R$, this 2-form is given by an expression
\begin{equation}\label{eq:theformula}
 \varpi_\D=\int_0^1 \d \Big((\phi')^2+\hh \S(\phi) \Big)\wedge \ \f{\d\phi}{\phi'}+\mbox{boundary terms}\end{equation}
involving the Schwarzian derivative $\S(\phi)$; the boundary terms are needed to make the expression invariant under diffeomorphisms.

The Morita equivalence \eqref{eq:corr02} of quasi-symplectic groupoids gives rise to a 1-1 correspondence of the associated 
Hamiltonian spaces. It hence allows us to associate (certain) Hamiltonian Virasoro spaces to finite-dimensional $\PSL(2,\R)$-spaces 
with $\wt{\SL}(2,\R)_+$-valued moment maps and vice versa. One such example, motivated by the  recent physics literature on Jackiw-Teitelboim gravity (e.g., \cite{sa:jt,sta:jt}), is the Teichm\"uller moduli space of conformally compact hyperbolic metrics on surfaces with boundary. Details of these and similar examples will be discussed in forthcoming work.  

The Morita equivalence for the Virasoro Lie algebra is related to a similar 
Morita equivalence for loop groups, via Drinfeld-Sokolov reduction. For any connected Lie group $G$ with an invariant metric on its Lie algebra $\g$, the 
loop algebra $L\g$ has a central extension $\wh{L\g}$. Taking $G=\PSL(2,\R)$, the Drinfeld-Sokolov procedure \cite{dri:kdv} realizes $\vir^*_1(S^1)$ as a Marsden-Weinstein reduced space of $\wh{L\g}^*_1$; we shall work with the coordinate-free description of this  method, as described by Segal \cite{seg:geo}. An analogue of the 2-form $\varpi_\D$ for the loop group setting (for more general $G$) was given in our earlier work \cite{al:mom,al:ati}, and used to establish a 1-1 correspondence between Hamiltonian loop group spaces and finite-dimensional q-Hamiltonian  spaces. See \cite{xu:mom} for its interpretation as a Morita equivalence.

Throughout this article, we will treat  infinite-dimensional manifolds as Fr\'{e}chet manifolds \cite{ham:inv}, but without entering any technical discussion. Recent references giving a detailed treatment of the relevant techniques in a related context are \cite{die:gro,rie:ham}.
 Alternatively, one could work with diffeologies \cite{igl:dif}. Recall that diffeology is particularly well-suited for dealing with differential forms, which actually suggests diffeological symplectic and quasi-symplectic groupoids as  convenient stand-ins for infinite-dimensional Poisson and Dirac manifolds.  

The organization of the paper is as follows. Section \ref{sec:vir} discusses Hill operators and the Virasoro algebra. The approach will be coordinate-free; the coordinate expressions are spelled out towards the end of the section. Section \ref{sec:dev} studies the geometry of the space of developing maps. In particular, we determine the stabilizers for the actions of diffeomorphisms; this leads us to the construction of the groupoid $\G_1$. Section \ref{sec:mor} describes the 2-form $\varpi_\D$ 
on the space of developing maps, giving the Morita equivalence of quasi-symplectic groupoids and resulting in a correspondence of their Hamiltonian spaces. Section \ref{sec:ds} gives the  construction of $\varpi_\D$ by Drinfeld-Sokolov reduction, which accounts for its basic properties. The appendix gives a general review of Morita equivalence of quasi-symplectic Lie groupoids, as defined by Xu \cite{xu:mom}.  
\bigskip

{\bf Acknowledgments.} We are grateful to Henrique Bursztyn, Rui Fernandes, and Yiannis Loizides for helpful
discussions on various aspects of this work. Research of A.A. was supported in part by the grants 182767, 208235, 200400 and by the NCCR SwissMAP of the Swiss National Science Foundation (SNSF), and by the award of
the Simons Foundation to the Hamilton Mathematics Institute of the Trinity College
Dublin under the program \emph{Targeted Grants to Institutes}. Research of E.M. was supported by an NSERC Discovery Grant.

\section{Virasoro Lie algebra}\label{sec:vir}
We begin by reviewing some background on the Virasoro Lie algebra, and its relationship with Hill operators.  We shall follow the coordinate free approach from \cite{fre:ver,seg:uni}; further information may be found in \cite{khe:inf,kir:orb,ovs:pro}.

\subsection{Hill operators}
Let $\CC$ be a 1-dimensional oriented manifold. For $r\in \R$, we denote by 
$|\Lambda|^r_\CC\to \CC$ the $r$-density bundle on $\CC$; its space of sections is denoted by 
\[ |\Omega|^r_\CC=\Gamma(|\Lambda|^r_\CC).\]
The orientation on $\CC$ identifies $|\Lambda|^r_\CC$  with the cotangent bundle for $r=1$, and with the 
tangent bundle for $r=-1$. 
%There is a canonical pairing between $r$-densities $u$ of compact support with $1-r$-densities $v$, given by $\int_\CC u\,v$. 
%Consider %$\on{DO}^k_{r_1,r_2}(\CC)$ be the space of 
The principal symbol of a $k$-th order differential operator 
\[ D\colon |\Omega|^{r_1}_\CC\to  |\Omega|^{r_2}_\CC\]
is an element 
\[ \sigma_k(D)\in \Gamma\big(\on{Sym}^k(T\CC)\otimes \Hom(|\Lambda|^{r_1}_\CC,|\Lambda|^{r_2}_\CC)\big)
\cong  |\Omega|^{r_2-r_1-k}_\CC; 
\]
the principal symbol of a composition of two such operators is the product of the principal symbols. 
The formal  adjoint 
%\colon \Gamma(|\Lambda|^{1-r_2})_\CC\to   \Gamma(|\Lambda|^{1-r_1})_\CC$ 
of $D$ is the $k$-th order differential operator 
\[ D^*\colon |\Omega|_\CC^{1-r_2}\to |\Omega|_\CC^{1-r_1}\]
given by $\int_\CC (Du)v=\int_\CC u(D^*v)$ whenever $u$ has compact support; its principal symbol is 
$\sigma_k(D^*)=(-1)^k \sigma_k(D)$. %Note that $D$ and $D^*$ act on the same spaces if and only if $r_1+r_2=1$. In this case, we may ask for $D$ to be \emph{self-adjoint}. 

\begin{example}\phantom{.}
	\begin{enumerate}
		\item 
		Every $v\in \Vect(\CC)$ defines a first order differential operator $\L_v\colon |\Omega|^{r_1}_\CC\to  |\Omega|^{r_1}_\CC
		$ given by {Lie derivative}, with $\sigma_1(\L_v)=v$. The formal adjoint is 
		$\L_v^*=-\L_v$ as an operator $|\Omega|^{1-r_1}_\CC\to  |\Omega|^{1-r_1}_\CC$. 
		\item
		Every  $u\in  |\Omega|^{r}_\CC$ defines a $0$-th order differential operator $M_u\colon  |\Omega|^{r_1}_\CC\to  |\Omega|^{r_1+r}_\CC$ given by multiplication, with $\sigma_0(M_u)=u$. The formal adjoint  is 
		 $M_u^*=M_u\colon |\Omega|^{1-r_1-r}_\CC\to  |\Omega|^{1-r_1}_\CC$. 
	\end{enumerate}
\end{example}
A \emph{Hill operator} is a second order differential operator 
\[ L\colon  |\Omega|^{ -\f{1}{2}}_\CC \to  |\Omega|^{ \f{3}{2}}_\CC 
%L\in \on{DO}^2_{-\f{1}{2},\f{3}{2}}(\CC)
\]
with $L^*=L$ and with principal symbol $\sigma_2(L)=1$.  (The choices $r_1=-\f{1}{2},\ r_2=\f{3}{2}$ are the unique ones for which these conditions make sense.) We denote by $\on{Hill}(\CC)$ the affine space  of all Hill operators; the underlying linear space is the space $|\Omega|^2_\CC$ of
quadratic differentials. 
%\footnote{Kirillov requires $\sigma_2(L)=2$ instead. This would lead to the standard normalizations of $\vir(\CC)$ of $\CC=S^1$?!}

It is often convenient to choose a connection on the bundle $|\Lambda|^{-\f{1}{2}}_\CC$, or equivalently a  first order differential operator 
\begin{equation}\label{eq:partial}\partial\colon  |\Omega|^{ -\f{1}{2}}_\CC \to  |\Omega|^{ \f{1}{2}}_\CC\end{equation}
with $\sigma_1(\partial)=1$. (Any two such differ by $M_u$ for some 
$u\in |\Omega|^1_\CC$.)  
Then $\partial^*\colon  |\Omega|^{\f{1}{2}}_\CC \to  |\Omega|^{ \f{3}{2}}_\CC $, and the 
	Hill operators on $\CC$ are of the 
	form 
	\[ L=-\partial^*\circ \partial+\T,\] 
	where $\T\in |\Omega|^{2}_\CC$ is a quadratic differential.  
Using \eqref{eq:partial} we may define the \emph{Wronskian} of two $-\hh$-densities as 
\begin{equation}\label{eq:wronskian}
 W(u_1,u_2)=u_1\partial u_2-u_2\partial u_1\in |\Omega|^{0}_\CC;\end{equation}
this does not depend on the choice of $\partial$.

The group $\Diff_+(\CC)$ of orientation preserving diffeomorphisms acts on the spaces of $r$-densities by push-forward. 
This induces an affine action on $\on{Hill}(\CC)$ by 
\begin{equation}\label{eq:hillaction} \FF\cdot L=\FF_*\circ L\circ (\FF^{-1})_*,\end{equation}
with underlying linear action the push-forward  of
quadratic differentials. Given a Hill operator $L$ and any $v\in \Vect(\CC)$, the Lie derivative $\L_v(L)=[\L_v,L]$ is a differential operator of order $0$, and hence is multiplication by a  quadratic differential. Denoting the latter by $-D_L v$, this defines a linear map
\begin{equation}\label{eq:dv} D_L\colon |\Omega|^{-1}_\CC\to  |\Omega|^{2}_\CC,\end{equation}
satisfying $D_L [v_1,v_2]=\L_{v_1} (D_L v_2)-\L_{v_2}(D_L v_1)$. By definition, 
\begin{equation}\label{eq:infhillaction} v\cdot L=-D_L v\end{equation}
describes the infinitesimal action on the space of Hill operators. From the coordinate description below, we
will see below that $D_L$ is a third order differential operator, with 
\[ D_L^*=-D_L,\ \ \sigma_3(D_L)=-\hh.\]
Finally, we note that the Hill operator determines for all $x_0\in \CC$,  a symmetric bilinear form (`bilinear concomitant')
on the 2-jet bundle $J^2(T\CC)$ 
\[ B_{L}\colon J^2(T\CC)\times_{\CC} J^2(T\CC)\to \R.\]
At $x_0\in \CC$, this bilinear form is given by 
\begin{equation}\label{eq:bl}
 B_{L,x_0}(j^2_{x_0}(v_1),\,j^2_{x_0}(v_2))=
\int_{I_+}\big((D_L v_1)v_2+v_1(D_L v_2)\big)\end{equation}
where $I\subset \CC$ is an open interval around $x_0$, 
$I_+=\{x\in I\colon x\ge x_0\}$ is its positive side with respect to the orientation of $\CC$
and $v_1,v_2$ are representatives of the given 2-jets with compact support in $I$. From now on, we will  write 
$B_{L,x_0}(v_1,v_2)$ in place of $B_{L,x_0}(j^2_{x_0}(v_1),\,j^2_{x_0}(v_2))$. If $x_0,x_1\in \CC$ are the end points of a 
positively oriented arc in $\CC$, we obtain 
\begin{equation}\label{eq:intbyparts}
 \int_{x_0}^{x_1}  \big((D_L v_1)v_2+v_1(D_L v_2)\big)
 =B_{L,x_1}(v_1,v_2)-B_{L,x_0}(v_1,v_2).\end{equation}

\subsection{Virasoro Lie algebra}\label{subsec:vir}
The construction of $\vir(\CC)$ (following \cite{fre:ver}) uses the following well-known principle. 
Let $K$ be a Lie group, with an affine $K$-action on an affine space $E$ such that the underlying linear action is the coadjoint action of $K$ on  $\k^*$. Then one obtains a central extension 
\[ 0\to \R\to \wh{\k}\to \k\to 0\]
 of the Lie algebra. As a vector space, 
$\wh{\k}$ is the space of affine-linear maps $\wh{\xi}\colon E\to \R$; the associated linear map $\xi\colon \k^*\to \R$ is an element of $\k$. The Lie bracket is given by 
\[ [\wh{\xi}_1,\wh{\xi}_2](\mu)=-\l \xi_1\cdot\mu,\xi_2\r,
\]
%For  $\xi_1,\xi_2\in \k$ with lifts $\wh{\xi}_i\in\wh{\k}$, the bracket is given by 
where $\xi\cdot \mu=\xi_E|_\mu\in T_\mu E=\k^*$ are the infinitesimal 
generators corresponding to $\xi\in\k$. 
A choice of base point $\mu_0\in E$ defines a splitting $\wh{\k}=\k\oplus \R$, and the corresponding Lie algebra cocycle is 
$(\xi_1,\xi_2)\mapsto -\l \xi_1\cdot\mu_0,\xi_2\r$. 

Suppose $\CC$ is compact, connected, and oriented. Using the integration pairing between $|\Omega|^2_\CC$
and $|\Omega|^{-1}_\CC\cong \Vect(\CC)$, the action of $\Diff_+(\CC)$ on quadratic differentials is indeed the coadjoint action on the smooth dual of $\Vect(\CC)$. As remarked above, this is the linear action underlying the affine action of $\Diff_+(\CC)$ on the space of Hill operators. The resulting 
central extension 
\begin{equation}\label{eq:virasoro}
0\to \R\to \vir(\CC)\to \Vect(\CC)\to 0
\end{equation}
is the \emph{Virasoro Lie algebra}. As a vector space, $\vir(\CC)$ consists of affine-linear functionals $\wh{v}\colon \on{Hill}(\CC)\to \R$ for which the underlying linear functional $|\Omega|^{2}_\CC \to \R$ is given by pairing with an element $v\in \Vect(\CC)$.  
Using the description \eqref{eq:infhillaction} of the infinitesimal action, we see that the 
 Lie bracket on $\vir(\CC)$ is  given by 
\begin{equation}\label{eq:bracket}
 [\wh{v}_1,\wh{v}_2](L)%=\l v_1,\ [\L_{v_2},L]\r
=\int_\CC (D_Lv_1) v_2.\end{equation}
From now on, we refer to the action of $\Diff_+(\CC)$ on the affine space
\begin{equation}\label{eq:coadvir} \vir^*_1(\CC)=\on{Hill}(\CC)\end{equation}
as the \emph{coadjoint Virasoro action}.

\subsection{Coordinate descriptions}\label{subsec:coordinate} 
Choose an identification $\CC\cong S^1=\R/\Z$, and denote by $x$ the local coordinate on $S^1$. The  $r$-density bundles $|\Lambda|^r_\CC$ are canonically trivialized by $|\partial x|^r$ . For $u=f\,|\partial x|^r$, with 
coefficient function $f$, we write 
\begin{equation}\label{eq:primenotation}
u'=\f{\p f}{\p x}  |\partial x|^{r+1},\end{equation}
an $(r+1)$-density. 
The Hill operators on $\CC$ are of the form 
\begin{equation}\label{eq:hillpotential} L u=u''+\T u,\end{equation}
%for $u\in |\Omega|^{-\f{1}{2}}_{S^1}$, 
with a \emph{Hill potential} $\T\in |\Omega|^{2}_{S^1}$. For $\FF\in \Diff_+(S^1)$, the Hill potential of 
$\FF^{-1}\cdot L=\FF^*\circ L\circ \FF_*$  is given by
%
%$\FF^*L=\FF^*\circ L\circ (\FF^{-1})^*$ has Hill potential \[ \FF^*\T+\hh \S(\FF)\](where $\FF^*$ is the pullback as a quadratic differential). 
%
\begin{equation}\label{eq:coadjointaction}%\FF_*(\T-\hh\S(\FF))=
\FF^*\T+\hh \S(\FF)\end{equation}
where 
$\S\colon \Diff(S^1)\to |\Omega|^{2}_{S^1} $ is the Schwarzian derivative
\begin{equation}\label{eq:schwarzian}\S(\FF)=\f{\FF'''}{\FF'}-\f{3}{2}\big(\f{\FF''}{\FF'}\big)^2.\end{equation}
Here we used \eqref{eq:primenotation} for $\FF$ regarded as an $\R/\Z$-valued function; for example $\FF'''$ is a 3-density. 
The infinitesimal generators of the action are 
%\footnote{For $\FF(x)=x+t v(x)$ we have $\S(\FF)=t v'''(x)+O(t^2)$. Similarly	$\FF_*T=T-t(vT'+2vT)+O(t^2)$, so 	$\FF_*(\T-\hh\S(\FF))=T-t(v'''+vT'+2v'T)+O(t^2)$.}
\begin{equation}\label{eq:dlv} -D_L v=\T'v+2\T v'+\hh v'''.\end{equation}
One notes that $(D_L v_1)v_2+v_1(D_Lv_2)$ is the total derivative of 
$-\big(2\T v_1v_2+\hh(v_1'' v_2-v_1'v_2'+v_1v_2'')\big)$,
which verifies that $D_L^*=-D_L$, and gives the formula  
\begin{equation}\label{eq:bdryterm}
B_{L,x_0}(v_1,v_2)=\big(2\T v_1v_2+\hh(v_1'' v_2-v_1'v_2'+v_1v_2'')\big)\Big|_{x_0}.
\end{equation}
for the bilinear form \eqref{eq:bl}. 

Taking the Hill operator $L_0$ with the zero Hill potential (i.e., $L_0u=u''$) as the base point for the affine space 
$\on{Hill}(S^1)$, we see that $\vir(S^1)$ is the central extension of $\Vect(S^1)$ defined by the Gelfand-Fuchs cocycle 
\begin{equation}\label{eq:GF} c_{\on{GF}}(v_1,v_2)=\hh \int_{S^1} v_1'''\  v_2,\ \ \ v_1,v_2\in \Vect(S^1).\end{equation}
%
%The adjoint action of the group $\Diff(S^1)$  reads as
%\begin{equation}\label{eq:adjointaction} \FF.(v,\,a)=\big(\FF_* v,\ a-\int_{S^1} \S(\FF)\ v\big),
%\end{equation}
%and the coadjoint action on $\vir(S^1)^*$ is 
%\begin{equation}\label{eq:coadj} \FF.(\mu,\mathsf{c})=(\FF_*\mu-\mathsf{c}\, \S(\FF^{-1}),\mathsf{c}).\end{equation}
For $\FF\in \Diff_+(S^1)$ and $(\T,\mathsf{c})\in \vir^*(\CC)\cong |\Omega|^{2}_{S^1} \times \R$
we have 
\[ F^{-1}\cdot (\ca{T},\mathsf{c})= \big(\FF^*\T+\f{\mathsf{c}}{2} \S(\FF),\mathsf{c}\big).\] 
%

%
%Suppose $E\to \CC$ is a real vector bundle, and $D\colon \Gamma(E)\to \Gamma(|\Omega|_\CC^1\otimes E^*)$ a $k$-th order (formally) skew-adjoint differential operator, i.e., $\int_\CC (\l D\sigma_1,\sigma_2\r+\l\sigma_1,D\sigma_2\r)=0$ for compactly supported sections $\sigma_1,\sigma_2$. Then $D$ induces a symmetric bilinear form (`bilinear concomitant') on the jet bundle $J^{k-1}E$. To describe it at $x_0\in \CC$,  choose an open interval $I\subset \CC$ containing $x_0$, and let $I_+=\{x\in I\colon x\ge x_0\}$, using the ordering given by the orientation. Given two $(k-1)$-jets at $x_0$, pick representatives $\sigma_1,\sigma_2\in \Gamma(E)$ with compact support in $I$, and put \[  B_D(j_{x_0}^2(\sigma_1),j_{x_0}^2(\sigma_2))=\int_{I_+}(\l D\sigma_1,\sigma_2\r+\l\sigma_1,D\sigma_2\r).\]Applying this construction to the operator $D_L$ for a Hill operator $L$, with $E=|\Lambda_\CC|^{-1}=TC$, we obtain a bilinear form on $J^2(T\CC)$. We will write $B_L=B_{D_L}$. \begin{proposition}\label{prop:bilform}For $\CC=S^1=\R/\Z$, the bilinear form on $J^2(TS^1)$ defined by the Hill operator $Lu=u''+\T u$ is given by \begin{equation}\label{eq:bdryterm}	 B_L(j_{x_0}^2(v_1),j_{x_0}^2(v_2))=\Big(2\T v_1v_2+\hh(v_1'' v_2-v_1'v_2'+v_1v_2'')\Big)\Big|_{x_0}.\end{equation}\end{proposition}

\section{The space of developing maps}\label{sec:dev}
For the rest of this paper, we take $\CC$ to be compact, connected, and oriented (i.e., diffeomorphic to an oriented circle). We let $\wt{\CC}$ be a simply connected covering space, so that $\CC=\wt{\CC}/\Z$.  The diffeomorphism of $\wt{\CC}$ corresponding to translation by $1\in \Z$ will be denoted $\kappa$. 

%Throughout this section, $\PSL(2,\R)=\PSL(2,\R)$, with $\wt{\PSL(2,\R)}$ its universal cover. 
\subsection{Developing maps}
Let $L\in \on{Hill}(\CC)$ be a Hill operator, and $u_1,u_2\in |\Omega|^{-\f{1}{2}}_{\wt\CC}$ a fundamental system of solutions (defined on the universal cover), with normalized Wronskian 
\[W(u_1,u_2)=-1.\] 
The normalization determines the fundamental system uniquely up to the action of $\SL(2,\R)$. Since $u_1,u_2$ have no common zeroes, 
their ratio
\[ \gamma=(u_1:u_2)\colon \wt{\CC}\to \RP(1)\]
is a well-defined local diffeomorphism. Under the action of $\kappa$, this map transforms according to
$\gamma(\kappa\cdot x)=h\cdot \gamma(x)$ (with the standard action of $\PSL(2,\R)$ on $\RP(1)$),  
for some $h\in \PSL(2,\R)$. 
\begin{definition}\cite{thu:3dim}
	A  \emph{developing map} is an 
	orientation preserving local diffeomorphism $\gamma\colon \wt{\CC}\to \RP(1)$ 
	which is \emph{quasi-periodic}, in the sense that 
	\[ \gamma(\kappa\cdot x) =h\cdot \gamma(x)\]
	%\[ \gamma(\kappa(x))=h\cdot \gamma(x)\]
	for some  \emph{monodromy} $h\in \PSL(2,\R)$. We denote by 
	\[ \DC\subset  C^\infty(\wt{\CC},\RP(1))\]
	the space of developing maps.
\end{definition}
Thus, every normalized fundamental system for a Hill operator defines a developing map; a different choice of 
fundamental system changes $\gamma$ by the natural $\PSL(2,\R)$-action, 
\begin{equation}\label{eq:gaction}
 (g\cdot \gamma)(x)=g\cdot \gamma(x).\end{equation}
Conversely, every developing map $\gamma$ arises in this way from a unique Hill operator $L$. 

\begin{definition}\label{def:normalized}
	A \emph{normalized lift} of $\gamma\in \DC$ is a pair of $-\hh$-densities $u_1,u_2\in |\Omega|^{-\f{1}{2}}_{\wt\CC}$ with $W(u_1,u_2)=-1$, with  $\gamma=(u_1:u_2)$. 
\end{definition}
Every $\gamma$ admits a normalized lift, unique up to an overall sign. 
With $\partial$ as in \eqref{eq:partial}, there is a unique Hill operator 
having the normalized lift as its fundamental system of solutions. Explicitly, 
\begin{equation}\label{eq:Hilloperator} 
L(u)=-\det\left(\begin{array}{ccc}
\partial^*\partial u & \partial^*\partial u_1 & \partial^*\partial u_2\\
\partial u & \partial u_1 & \partial u_2\\
u & u_1 & u_2
\end{array}	
\right)\end{equation}
(The right hand side defines an operator over $\wt\CC$; one checks that it descends to $\CC$.) This defines a surjective map 
\[ p\colon \DC\to \on{Hill}(\CC)=\vir^*_1(\CC),\] 
which is the quotient map for the principal $\PSL(2,\R)$-action \eqref{eq:gaction}.

Writing 
\[ \gamma=(\sin\phi:\cos\phi)\]
with $\phi\in C^\infty(\wt{\CC},\R),\ \phi'>0$ (defined uniquely up to multiples of $\pi$), a normalized lift is given by 
\begin{equation}\label{eq:normalized} u_1=(\partial \phi)^{-\f{1}{2}}\sin\phi,\ u_2=(\partial \phi)^{-\f{1}{2}}\cos\phi\end{equation}
(with $\partial\phi$ the positive 1-density given by exterior differential).
 For $\CC=S^1=\R/\Z$, one finds that the associated Hill potential  is given by 
\begin{equation}\label{eq:T} \T=(\phi')^2 +\hh \S(\phi),\end{equation}
with the Schwarzian derivative $\S(\phi)=\phi'''/\phi' -\f{3}{2}(\phi''/\phi')^2$.

\begin{remark}
Given a developing map, the projective structure on $\RP(1)$ pulls back to a $\Z$-invariant projective structure on $\wt\CC$, which hence descends to $\CC$. This gives the identification $\on{Hill}(\CC)\cong \on{Proj}(\CC)$ of the space of Hill operators with the space of projective structures. 
\end{remark}

\subsection{The quotient map $q$}
The $\PSL(2,\R)$-action on $\RP(1)$ lifts to a $\wt{\SL}(2,\R)$-action on the universal cover, $\wt{\RP}(1)=\R$. 
Hence, writing $\gamma\in \DC$ in the form  $\gamma=(\sin\phi:\cos\phi)$ as above, we have 
	 \[ \phi(\kappa\cdot x)=\wt{h}\cdot \phi(x)\] 
with the \emph{lifted  monodromy} $\wt{h}\in \wt{\SL}(2,\R)$. 
The map $\phi$ is only determined up to multiples of $\pi$, but the lifted monodromy does not depend on the choice. 
This defines a map 
\[ q\colon \DC\to \wt{\SL}(2,\R)\]
taking $\gamma$ to $\wt{h}$. Let $\wt{\SL}(2,\R)_+$ be the range of this map, giving the diagram

\begin{equation}\label{eq:corr0b}
%\xymatrix{ & \DC\ar[dl]_p\ar[dr]^q & \\	\vir^*_1(\CC) & & \wt{\SL}(2,\R)_+}
\begin{tikzcd}
[column sep={5.5em,between origins},
row sep={4em,between origins},]
& \DC \arrow[dl, "p"]  \arrow [dr, "q"'] & \\
\vir^*_1(\CC) & & \wt{\SL}(2,\R)_+
\end{tikzcd}
\end{equation}

\begin{proposition}\label{prop:image}
	The image of the map $q$ is the subset 
	\[ \wt{\SL}(2,\R)_+=\{\wt{h}\in\wt{\SL}(2,\R)|\ \exists \phi_0\in \R\colon \wt{h}\cdot \phi_0>\phi_0\}.\]
\end{proposition}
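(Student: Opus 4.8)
The plan is to prove the two inclusions separately, after translating everything into the diffeomorphism $\psi\colon\R\to\R$ by which a given $\wt h\in\wt{\SL}(2,\R)$ acts on $\wt{\RP}(1)=\R$. This $\psi$ is orientation-preserving, hence strictly increasing, and commutes with the deck transformation $\phi\mapsto\phi+\pi$. In the coordinate $\phi$ (writing $\wt\CC\cong\R$ with $\kappa\colon t\mapsto t+1$), a developing map with lifted monodromy $\wt h$ is precisely a smooth $\phi\colon\R\to\R$ with $\phi'>0$ satisfying the cocycle equation $\phi(t+1)=\psi(\phi(t))$, and then $\gamma=(\sin\phi:\cos\phi)$ recovers the developing map. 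So the proposition is equivalent to: such a $\phi$ exists if and only if $\psi(\phi_0)>\phi_0$ for some $\phi_0$.

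The inclusion $\im q\subseteq\wt{\SL}(2,\R)_+$ is immediate. Given a developing map with lifted monodromy $\wt h$, pick any $t$ and set $\phi_0=\phi(t)$; since $\phi$ is strictly increasing and $t+1>t$, the cocycle equation gives $\wt h\cdot\phi_0=\phi(t+1)>\phi(t)=\phi_0$.

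The substance is the reverse inclusion. Assume $\psi(\phi_0)>\phi_0$. Since $F(\phi):=\psi(\phi)-\phi$ is smooth and $\pi$-periodic, $\phi_0$ lies in a maximal open interval $I$ on which $F>0$; its finite endpoints (if any) are fixed points of $\psi$, and $\psi$ maps $I$ onto itself with $\psi(\phi)>\phi$ throughout. In either case ($I$ bounded with fixed endpoints, or $I=\R$, where periodicity forces $\min F>0$) every orbit $\{\psi^n(\phi_0)\}_{n\in\Z}$ runs monotonically through all of $I$. I would then construct an increasing diffeomorphism $\phi\colon\R\to I$ conjugating the unit shift to $\psi|_I$, i.e.\ solving $\phi(t+1)=\psi(\phi(t))$, by the fundamental-domain method: the orbit intervals $[\psi^n(\phi_0),\psi^{n+1}(\phi_0)]$ tile $I$, so it suffices to fix $\phi$ on $[0,1]$ as a strictly increasing diffeomorphism onto $[\phi_0,\psi(\phi_0)]$ and propagate by $\phi(t+n)=\psi^n(\phi(t))$.

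The main obstacle is ensuring that the propagated $\phi$ is globally $C^\infty$ and strictly increasing, the only delicate point being smoothness across the integer seams. I would arrange this by seeding $\phi$ near $t=0$ with an arbitrary increasing germ $g$ (with $g(0)=\phi_0$) and letting the cocycle dictate $\phi=\psi\circ g(\,\cdot-1)$ on $[1-\delta,1+\delta]$; the two germs then automatically agree to infinite order with the propagation formula at the seam $t=1$, and consistency at the other integers follows by iterating $\psi$ and its inverse. Since the endpoint values of the gap $[\,\delta,1-\delta\,]$ are correctly ordered, $g(\delta)<(\psi\circ g)(-\delta)$, with positive slopes, a standard monotone interpolation fills the gap with a positive-derivative function of the required total increase. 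This yields the desired $\phi$, whence $\gamma=(\sin\phi:\cos\phi)\in\DC$ with $q(\gamma)=\wt h$, completing the construction.
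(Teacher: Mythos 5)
Your proof is correct and takes essentially the same route as the paper: the forward inclusion follows from monotonicity of $\phi$, and the reverse inclusion is obtained by producing a smooth increasing function $\phi$ with $\phi(x_0)=\phi_0$ and $\phi(\kappa(x))=\wt{h}\cdot\phi(x)$, then setting $\gamma=(\sin\phi:\cos\phi)$. The only difference is one of detail: the paper simply asserts that such a $\phi$ can be chosen, whereas you spell out the underlying construction (the orbit intervals of $\psi$ tiling the maximal interval where $\psi>\mathrm{id}$, plus germ propagation across the integer seams), which is exactly the argument the paper leaves implicit.
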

\begin{proof}
	By definition, if $\gamma\in \DC$ has lifted monodromy 
	$\wt{h}=q(\gamma)$, then $\wt{h}\cdot\phi(x)=\phi(\kappa(x))>\phi(x)$ since $\phi$ is increasing. This 
	proves the inclusion $\subseteq$. For the opposite inclusion, suppose that $\wt{h}\in \wt{\SL}(2,\R)$ and $\phi_0\in \R$ with
	$\wt{h}\cdot\phi_0>\phi_0$ are given. Pick a base point $x_0\in\wt{\CC}$, and choose an increasing function 
	$\phi\in C^\infty(\wt{\CC},\R)$ with $\phi(x_0)=\phi_0$ and with the property $\phi(\kappa(x))=\wt{h}\cdot \phi(x)$ for all $x$. 
	Then $\gamma=(\sin(\phi):\cos(\phi))$ is a developing map with lifted monodromy $\wt{h}$.
\end{proof}

For a more concrete description, recall the classification of conjugacy classes in $\wt{\SL}(2,\R)$.
Let 
\begin{equation}\label{eq:j} J_1=\left(\begin{array}{cc}0&1\\ -1& 0
\end{array}\right),\ \ J_2=\left(\begin{array}{cc}1&0\\ 0& -1
\end{array}\right),\ \ J_3=\left(\begin{array}{cc}0&1\\ 0& 0
\end{array}\right),\end{equation}
the basis of $\mf{sl}(2,\R)$ adapted to the standard Iwasawa decomposition $\mf{sl}(2,\R)=\k\oplus \mf{a}\oplus \mf{n}$. 
Every element of $\wt{\SL}(2,\R)$ is conjugate to exactly one of the following types:
% \[ \{r_\alpha|\ \alpha\in \R\}\cup \{h_{s,n}|\ s>0,\ n\in\Z\}\cup \{p^\pm_n|\ n\in \Z\}.\]
\begin{enumerate}
	\item[(i)] {\bf elliptic/central}: $r_\alpha$ for $\alpha\in \R$ 
	is the homotopy class of the path $t\mapsto \exp(t\alpha J_1)$. 
	In particular,  $r_{\pi n}, \, n\in \Z$ are the central elements of $\wt{\SL}(2,\R)$. \smallskip
	\item[(ii)] {\bf hyperbolic}: 
	$h_{\beta,n}=r_{\pi n} h_{\beta,0}$ for $n\in \Z,\ \beta>0$, 
	where 
	$h_{\beta,0}\in \wt{\SL}(2,\R)$ is the homotopy class of the path $t\mapsto \exp(t \beta J_2)$.\smallskip
	\item[(iii)] {\bf parabolic}: $p_n^\pm=r_{\pi n} p_0^\pm$ where 
	$p_0^\pm$ are represented by paths $t\mapsto \exp(\pm t J_3)$.
\end{enumerate}
Among these elements, exactly 
\begin{equation}\label{eq:representatives}
 \{r_\alpha|\ \alpha>0\}\cup \{h_{\beta,n}|\ \beta>0,\ n\ge 0\}\cup \{p^\pm_n|\ n>0\}\cup \{p^+_0\}\end{equation}
are in $\wt{\SL}(2,\R)_+$. The (non-Hausdorff) space $\wt{\SL}(2,\R)/\PSL(2,\R)$ of conjugacy classes is illustrated by the following well-known picture.
\bigskip
\begin{center}
	%\resizebox{12cm}{4cm}{
		\begin{tikzpicture}
		\draw [stealth-stealth](-8,0) -- (8,0);
		\draw [-stealth](0,0) --(0,-4);
		\draw [-stealth](2,0) --(2,-4);
		\draw [-stealth](4,0) --(4,-4);
		\draw [-stealth](6,0) --(6,-4);
		\draw [-stealth](-2,0) --(-2,-4);
		\draw [-stealth](-4,0) --(-4,-4);
		\draw [-stealth](-6,0) --(-6,-4);
		\draw (0,0.25) node{0};
		\draw (2,0.25) node{1};
		\draw (4,0.25) node{2};
		\draw (6,0.25) node{3};
		\draw (-2,0.25) node{-1};
		\draw (-4,0.25) node{-2};
		\draw (-6,0.25) node{-3};
		\filldraw[black] (-0.25,-0.25) circle (2pt);
		\filldraw[black] (0.25,-0.25) circle (2pt);
			\filldraw[black] (1.75,-0.25) circle (2pt);
		\filldraw[black] (3.75,-0.25) circle (2pt);
		\filldraw[black] (5.75,-0.25) circle (2pt);
		\filldraw[black] (2.25,-0.25) circle (2pt);
		\filldraw[black] (4.25,-0.25) circle (2pt);
		\filldraw[black] (6.25,-0.25) circle (2pt);
		\filldraw[black] (-1.75,-0.25) circle (2pt);
		\filldraw[black] (-3.75,-0.25) circle (2pt);
		\filldraw[black] (-5.75,-0.25) circle (2pt);
		\filldraw[black] (-2.25,-0.25) circle (2pt);
		\filldraw[black] (-4.25,-0.25) circle (2pt);
		\filldraw[black] (-6.25,-0.25) circle (2pt);
		\end{tikzpicture}
%	}
\end{center}
\bigskip

The horizontal line in this picture depicts the elliptic/central classes, with the central elements $r_{\pi n}$ as the nodes $n$. The vertical lines represent the hyperbolic classes, and the fat dots represent the parabolic classes. 
The space $\wt{\SL}(2,\R)_+/\PSL(2,\R)$ is the subset\medskip

\hfill	\begin{tikzpicture}
\draw [-stealth](0.08,0) -- (8,0);
\draw [-stealth](0,-0.08) --(0,-4);
\draw [-stealth](2,0) --(2,-4);
\draw [-stealth](4,0) --(4,-4);
\draw [-stealth](6,0) --(6,-4);
\draw (0,0.25) node{0};
\draw (2,0.25) node{1};
\draw (4,0.25) node{2};
\draw (6,0.25) node{3};
\draw(0,0)  circle (2pt);
\filldraw[black] (0.25,-0.25) circle (2pt);
\filldraw[black] (1.75,-0.25) circle (2pt);
\filldraw[black] (3.75,-0.25) circle (2pt);
\filldraw[black] (5.75,-0.25) circle (2pt);
\filldraw[black] (2.25,-0.25) circle (2pt);
\filldraw[black] (4.25,-0.25) circle (2pt);
\filldraw[black] (6.25,-0.25) circle (2pt);
\end{tikzpicture}
%	}

\bigskip

	Let $\CC=S^1=\R/\Z$, so $\wt\CC=\R,\ \kappa(x)=x+1$. 
The article of Balog-Feher-Palla \cite{bal:coa} (see also \cite{dai:coa}) contains a detailed discussion of the 
developing maps and Hill potentials for the representatives \eqref{eq:representatives}. 
%\begin{example}\label{ex:firstexamples} 
 The simplest examples are given by 	`exponential paths': 
 \[ x\mapsto \exp(xA)\cdot (\sin\phi_0:\cos\phi_0)\] 
 for $A\in \mf{sl}(2,\R),\ \phi_0\in\R$:
 		\begin{align*}
 	\gamma(x)&=\exp(x\alpha J_1)\cdot (0:1)=(\sin(\alpha x):\cos(\alpha x)) & &q(\gamma)=r_\alpha, 
 	&& \T=\alpha^2 |\partial x|^2
 	\\
 	\gamma(x)&=\exp(x\beta J_2)\cdot (1:1)=( e^{\beta x}:\ e^{-\beta  x})& &q(\gamma)=h_{\beta ,0},\ && \T=-\beta^2 |\partial x|^2\\
 	\gamma(x)&=\exp(xJ_3)\cdot(0:1)= (x:1)& & q(\gamma)=p_0^+&& \T=0
 	\end{align*}
 (Here $\alpha,\beta>0$.) 	
 	%The corresponding Hill potentials are $\T=\alpha^2 |\partial x|^2,\ \T=-\beta^2 |\partial x|^2,\ \T=0$, respectively.
 	For fixed $n\in \N$, if $A\in\mf{sl}(2,\R)$ is sufficiently close to $0$,  the path 
 	\begin{equation} \gamma(x)=\exp(x A)\exp(x\pi n J_1)\cdot (0:1)\end{equation}
 is a developing map (The condition $\gamma'>0$ holds since it is a small perturbation of the path for $A=0$).  This may be used to realize all $\wt{h}$ 
 in some open neighborhood of the central element $r_{\pi n}$.  
 In particular, the conjugacy classes of $p_n^\pm,\ n\in \N$  may be realized in this way, as well as $h_{\beta,n}$ for $\beta $ small. 	As developing maps realizing $h_{\beta,n}$ with $n\in \N$ for arbitrary $\beta >0$  one may take \cite[Section 3.5.2]{bal:coa}
 	\begin{align}\label{ex:feher}\gamma(x)&=\exp(x \beta   J_2)\exp(\f{\beta }{\pi n} J_3)\exp(x \pi n J_1)\cdot (0:1)
 	& &q(\gamma)=h_{\beta ,0},\ \ \beta >0,\ n\in \N.
 	\end{align}
 
% \end{example}

%
%\subsection{Lifted monodromy}

\subsection{Action of diffeomorphisms}
For the following discussion, we shall use the \emph{Poincar\'{e} translation number} 
\cite{ghy:gro}. The translation number is a quasi-homomorphism $\tau\colon \wt{\SL}(2,\R) \to \R$, 
defined in terms of the action of $\wt{\SL}(2,\R)$ on $\R=\wt{\RP}(1)$ by 
\begin{equation}\label{eq:translationnumber} \tau(\wt{g})=\lim_{N\to \pm\infty} \f{\wt{g}^N.\phi_0}{N\pi},  
\end{equation}
for arbitrary $\phi_0\in \R$. This is well-defined and independent of the choice of $\phi_0$; it is furthermore conjugation invariant and continuous, and it satisfies the quasi-homomorphism property $|\tau(\wt{g}_1\wt{g}_2)-\tau(\wt{g}_1)-\tau(\wt{g}_2)|\le 1$. 
One may think of the $\tau(\wt{g})$ 
as the `asymptotic slope' of the action of $\ti{g}$ on $\R$. We have 
\[ \tau(r_\alpha)=\f{1}{\pi} \alpha,\ \ \ \tau(h_{\beta,n})=n,\  \ \ \tau(p^\pm_n)=n.\] 
In terms of the picture of conjugacy classes, $\tau$ is simply the horizontal coordinate.

Recall that the action of $h\in \PSL(2,\R)$ on $\RP(1)$ has two fixed points if $h$ is hyperbolic, and a single fixed point if $h$ is parabolic. 

\begin{lemma}\label{lem:image1}
	Let $\gamma\in \DC$ be a developing map with lifted monodromy $\wt{h}=q(\gamma)$. 
	\begin{enumerate}
		\item If $\tau(\wt{h})>0$, then $\gamma\colon \R\to \RP(1)$ is surjective (and hence is a covering). 
		\item If $\tau(\wt{h})=0$, and $h$ is hyperbolic, then the range of $\gamma$ is an open arc whose end points are the 
		 two fixed points of $h$.
		\item  If $\tau(\wt{h})=0$, and $h$ is parabolic, then the range of $\gamma$ is the complement of the unique fixed point of $h$. 
\end{enumerate}
\end{lemma}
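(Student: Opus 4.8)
The plan is to convert the statement, which concerns the image of $\gamma$ in $\RP(1)$, into a statement about the one-dimensional dynamics of the lifted monodromy $\wt h$ on the line $\R=\wt{\RP}(1)$. Write $\gamma=(\sin\phi:\cos\phi)$ with $\phi\colon\R\to\R$ strictly increasing (as $\phi'>0$) and $\phi(x+1)=\wt h\cdot\phi(x)$. Since $\phi$ is a continuous increasing function, its image is an open interval $\phi(\R)=(a,b)$ with $a=\inf\phi\in[-\infty,+\infty)$ and $b=\sup\phi\in(-\infty,+\infty]$, and $\phi$ is a homeomorphism onto it. Under the covering projection $\pr\colon\R\to\R/\pi\Z=\RP(1)$ the developing map factors as $\gamma=\pr\circ\phi$, so its range is exactly $\pr\big((a,b)\big)$. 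Thus the whole problem reduces to locating the endpoints $a,b$ and computing the image of $(a,b)$ downstairs.

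I would then record three elementary inputs. Iterating the monodromy relation gives $\phi(x_0+N)=\wt h^N\cdot\phi(x_0)$, whence $b=\lim_{N\to+\infty}\phi(x_0+N)$ and $a=\lim_{N\to-\infty}\phi(x_0+N)$. The defining limit \eqref{eq:translationnumber} of the translation number yields $\lim_{N\to\pm\infty}\phi(x_0+N)/(N\pi)=\tau(\wt h)$. Finally, $\phi'>0$ forces $\phi(x+1)>\phi(x)$, that is $\wt h\cdot y>y$ for every $y\in(a,b)$ — the positivity already exploited in Proposition \ref{prop:image} — so the sequence $y_N:=\wt h^N\cdot\phi(x_0)$ is strictly increasing, and no interior point of $(a,b)$ can be fixed by $\wt h$ (a fixed value $\phi(x_1)$ would contradict $\wt h\cdot\phi(x_1)=\phi(x_1+1)>\phi(x_1)$).

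Part (a) is now immediate: if $\tau(\wt h)>0$, the asymptotic-slope identity forces $\phi(x_0+N)\to+\infty$ as $N\to+\infty$ and $\to-\infty$ as $N\to-\infty$, so $(a,b)=\R$ and $\gamma=\pr\circ\phi$ is surjective; being the composite of a homeomorphism with a covering, it is itself a covering. For parts (b) and (c), where $\tau(\wt h)=0$, I would invoke the classification: such $\wt h$ fixes points of $\R$, namely the $\pi\Z$-periodic preimage of the fixed-point set of $h$ in $\RP(1)$, which has two points per period $[c,c+\pi)$ when $h$ is hyperbolic and one when $h$ is parabolic. Since $\wt h$ (an increasing homeomorphism of $\R$) cannot carry a point across a fixed point while $\wt h\cdot y>y$ holds on $(a,b)$, the interval $(a,b)$ must sit inside a single gap $(p,q)$ between consecutive fixed points on which $\wt h$ pushes points upward; the bounded increasing orbit $y_N$ then converges to $q$ and $\wt h^{-N}\cdot\phi(x_0)$ to $p$ (limit points are fixed), giving $(a,b)=(p,q)$ with $p<q$ consecutive fixed points.

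It remains to project $(p,q)$ to $\RP(1)$ and match endpoints. In the hyperbolic case consecutive fixed points satisfy $q-p<\pi$ (two per period), so $\pr$ is injective on $(p,q)$ and the range of $\gamma$ is an open arc whose endpoints $\pr(p),\pr(q)$ are the two fixed points of $h$, proving (b). In the parabolic case the gap is exactly $q-p=\pi$ (one per period), so $\pr\big((p,q)\big)=\RP(1)\setminus\{\pr(p)\}$ is the complement of the unique fixed point of $h$, proving (c). I expect the one genuine difficulty to be pinning down $(a,b)=(p,q)$ exactly in the $\tau=0$ cases, i.e.\ showing that $\phi$ neither halts before reaching a fixed point nor sweeps past one; this rests on combining the monotonicity of $\phi$, the sign condition $\wt h\cdot y>y$, and the absence of interior fixed points, while the subsequent endpoint-matching and the length bookkeeping ($q-p<\pi$ versus $=\pi$) are exactly what separate conclusions (b) and (c).
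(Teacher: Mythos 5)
Your proposal is correct and follows essentially the same route as the paper's proof: pass to the increasing lift $\phi$ with $\phi(\kappa^N(x))=\wt{h}^N\cdot\phi(x)$, use $\tau(\wt h)>0$ to force surjectivity, and in the $\tau(\wt h)=0$ case show the range of $\phi$ contains no fixed points of $\wt h$ while its endpoints are limits of the $\wt h$-orbit, hence successive fixed points. The only difference is that you carry out explicitly the final descent to $\RP(1)$ (the fixed-point count per period of $\pi$, distinguishing the hyperbolic arc from the parabolic complement-of-a-point), which the paper leaves implicit.
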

\begin{proof}
	Write $\gamma=(\sin\phi:\cos\phi)\in \DC$.  We shall use the property 
	\begin{equation}\label{eq:see} \phi(\kappa^N(x))=\wt{h}^N\cdot \phi(x)\end{equation}
	for $x\in \R$, $N\in \Z$. 
	If $\tau(\wt{h})>0$, this property shows that $\phi$ is surjective, and hence 
	is a global diffeomorphism. Suppose  $\tau(\wt{h})=0$. In particular, $h$ must be hyperbolic or parabolic, and so its action 
	on $\RP(1)$ has (one or two) fixed points. Their pre-images are fixed points for the action of $\wt{h}$ (since 
	 $\tau(\wt{h})=0$). Since $\phi$ is increasing, equation \eqref{eq:see} shows that the range of $\phi$ cannot contain any such fixed points, but that for any given $x\in\wt\CC$ the limits $\lim_{N\to \pm\infty}\phi(\kappa^N(x))$ are fixed points. Hence, the range of $\phi$ must be the interval between two successive fixed points. 
\end{proof}

%\begin{lemma}\label{lem:image}	Let $\gamma=(\sin\phi:\cos\phi)\in \DC$ be a developing map with lifted monodromy $\wt{h}=q(\gamma)$. 	\begin{enumerate}		\item If $\tau(\wt{h})>0$, then $\phi$ is a bijection onto $\R$.		\item If $\tau(\wt{h})=0$, then $\phi$ is a bijection onto 		an open interval between two successive fixed points of $\wt{h}$. This interval has length $\pi/2$ when $h$ is hyperbolic, or $\pi$ when $h$ is parabolic. 	\end{enumerate}\end{lemma}\begin{proof}	If $\tau(\wt{h})>0$, the property $ \phi(\kappa^N(x))=\wt{h}^N\cdot \phi(x)$ shows that $\phi$ is surjective, and hence 	is a global diffeomorphism. On the other hand, recall that the action of $h\in G$ on 	$\RP(1)=\R/\pi\Z$ has two fixed points when $h$ is hyperbolic, and one fixed point when $h$ is parabolic. 	If  $\tau(\wt{h})=0$, then their pre-images in $\R$ are fixed points for the lifted action. The interval between two such fixed points must be an orbit. (See also 	\eqref{ex:hyp}, \eqref{ex:par}.)\end{proof}

Let $\Diff_\Z(\wt\CC)$ be the $\Z$-equivariant diffeomorphisms of $\wt\CC$. 
Any such diffeomorphism descends to an orientation preserving diffeomorphism of $\CC$, resulting in an identification 
\[ \Diff_\Z(\wt\CC)\cong \wt\Diff_+(\CC)\]
with the 
universal covering group of $\Diff_+(\CC)$. 

\begin{proposition}
	The map $q\colon  \DC\to \wt{\SL}(2,\R)_+$ is the quotient map for the action 
	\[ (\FF\cdot \gamma)(x)=\gamma(\FF^{-1}(x))\]
	of $\Diff_\Z(\wt\CC) $ on the space of developing maps. 
\end{proposition}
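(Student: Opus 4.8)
The plan is to verify two things which, together with the surjectivity already established in Proposition~\ref{prop:image}, say precisely that $q$ is the quotient map for the given action: first, that $q$ is \emph{invariant}, $q(\FF\cdot\gamma)=q(\gamma)$ for all $\FF\in\Diff_\Z(\wt\CC)$; and second, that the action is \emph{transitive on the fibers} of $q$, i.e.\ any two developing maps with the same lifted monodromy lie in a single $\Diff_\Z(\wt\CC)$-orbit.

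For invariance, write $\gamma=(\sin\phi:\cos\phi)$ with lifted monodromy $\wt h$, so that $\phi\circ\kappa=\wt h\cdot\phi$. The reparametrized map $\FF\cdot\gamma$ has phase $\psi=\phi\circ\FF^{-1}$, which is again increasing. Since $\FF$ is $\Z$-equivariant we have $\FF^{-1}\circ\kappa=\kappa\circ\FF^{-1}$, whence $\psi\circ\kappa=\phi\circ\kappa\circ\FF^{-1}=(\wt h\cdot\phi)\circ\FF^{-1}=\wt h\cdot\psi$. Thus $\FF\cdot\gamma$ has the same lifted monodromy, proving $q(\FF\cdot\gamma)=q(\gamma)$.

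For transitivity, let $\gamma_1,\gamma_2\in\DC$ have the same lifted monodromy $\wt h$, and write $\gamma_i=(\sin\phi_i:\cos\phi_i)$ with $\phi_i'>0$ and $\phi_i\circ\kappa=\wt h\cdot\phi_i$. Finding $\FF$ with $\gamma_2=\FF\cdot\gamma_1$ amounts to solving $\phi_1\circ\FF^{-1}=\phi_2$, for which the natural candidate is $\FF^{-1}:=\phi_1^{-1}\circ\phi_2$. This is an increasing diffeomorphism of $\wt\CC$ precisely when $\phi_1$ and $\phi_2$ have the same range $R\subseteq\R$, since each $\phi_i$ is then a diffeomorphism onto $R$. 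Granting this, applying $\phi_1^{-1}$ to $\phi_1\circ\kappa=\wt h\cdot\phi_1$ yields $\phi_1^{-1}(\wt h\cdot z)=\kappa\cdot\phi_1^{-1}(z)$, which gives $\FF^{-1}\circ\kappa=\kappa\circ\FF^{-1}$, so $\FF\in\Diff_\Z(\wt\CC)$, as required.

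It remains to match the ranges of $\phi_1$ and $\phi_2$, and this is where the main work lies; here one uses Lemma~\ref{lem:image1} together with the fact that each $\phi_i$ is only determined modulo $\pi\Z$. If $\tau(\wt h)>0$, Lemma~\ref{lem:image1}(a) shows both $\phi_i$ are surjective, so $R=\R$ and the candidate works directly. If $\tau(\wt h)=0$, then $\wt h$ is hyperbolic or parabolic and by Lemma~\ref{lem:image1}(b),(c) each range is a bounded open interval between two successive fixed points of $\wt h$ on $\R$. The fixed-point set of $\wt h$ is invariant under $\phi\mapsto\phi+\pi$ (the central element $r_\pi$ acts by this translation and commutes with $\wt h$), and the monotonicity of each $\phi_i$ forces both ranges to be intervals of the \emph{same type} --- running from a repelling to the next attracting fixed point in the hyperbolic case, and of length $\pi$ in the parabolic case. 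Hence these intervals form a single orbit under $\phi\mapsto\phi+\pi$, and replacing $\phi_2$ by $\phi_2+\pi k$ for a suitable $k\in\Z$ arranges $R_1=R_2$, completing the argument. The one point requiring care is this last type-matching of the intervals in the $\tau(\wt h)=0$ case; the surjective case and all the equivariance verifications are routine.
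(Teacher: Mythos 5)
Your proof is correct and follows essentially the same route as the paper: match the ranges of $\phi_1,\phi_2$ up to translation by a multiple of $\pi$ using Lemma \ref{lem:image1}, then take $\FF^{-1}=\phi_1^{-1}\circ\phi_2$ and check $\Z$-equivariance. You are in fact more careful than the paper at the one delicate point: in the hyperbolic, $\tau(\wt h)=0$ case, Lemma \ref{lem:image1} by itself leaves an ambiguity between the two arcs bounded by the fixed points, and your observation that quasi-periodicity together with monotonicity of $\phi_i$ forces both ranges to run from a repelling to an attracting fixed point (hence to lie in a single orbit of $\phi\mapsto\phi+\pi$) is precisely the detail that the paper's one-line appeal to the lemma suppresses.
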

\begin{proof}
	Let $\gamma_i=(\sin\phi_i:\cos\phi_i),\  i=1,2$ be two developing maps 
	with $q(\gamma_1)=q(\gamma_2)=\wt{h}$. Since the $\phi_i$ are increasing functions, they are  diffeomorphisms onto their range. By
	Lemma \ref{lem:image1}, the maps $\phi_1,\phi_2$ have the same range, after possibly changing $\phi_2$ by a multiple of $\pi$. 
	But then $\FF=\phi_2^{-1}\circ \phi_1\in \Diff_\Z(\wt\CC)$ satisfies	$\gamma_2=\gamma_1\circ \FF^{-1}$. 
\end{proof}
To summarize, the maps $p$ and $q$ in the diagram \eqref{eq:corr0b} are the 
quotient maps for commuting actions of $\PSL(2,\R)$ and $\Diff_\Z(\wt\CC)$, respectively. The projection $p\colon  \DC\to \vir^*_1(\CC)$ intertwines the $\on{Diff}_\Z(\wt\CC)$-action  with the action 
of $\Diff_+(\CC)$ on $\on{Hill}(\CC)$. 
We therefore obtain an identification of orbit spaces 
\begin{equation} \vir^*_1(\CC)/\Diff_+(\CC)\cong \wt{\SL}(2,\R)_+/\PSL(2,\R)\end{equation}
since both are identified as the quotients of $\DC$ under the action of $\Diff_\Z(\wt\CC)\times \PSL(2,\R)$. This recovers the classification of 
coadjoint Virasoro orbits in the form due to Goldman and Segal 
\cite{gol:the,seg:uni}. 

Nonetheless,  $\DC$ does \emph{not} give a Morita equivalence of action groupoids
for the two actions, since it does not identify the stabilizer groups. This may be traced back to the fact that 
the $\Diff_\Z(\wt\CC)$-action on the space of developing maps is not free: 
\begin{example}\label{ex:nontrivial}
	Let $\CC=S^1=\R/\Z$, and let $\gamma=(\sin(\alpha x):\cos(\alpha x)),\ \alpha>0$ be the developing map  realizing the elliptic/central element $r_\alpha$. The 
	stabilizer of $\gamma$ is 
	\[\Diff_\Z(\R)_\gamma= \f{\pi}{\alpha}\Z\subset  \Diff_\Z(\R),\]
	acting by translations. 
\end{example}

\subsection{Stabilizers} 
To understand the general situation, we shall now determine the stabilizers for the 
$\Diff_\Z(\wt\CC)\times \PSL(2,\R)$-action on $\DC$, and relate them to the stabilizers of the action of $\Diff_+(\CC)$ in $\vir^*_1(\CC)$ 
 and the conjugation action of $\PSL(2,\R)$ on $\wt{\SL}(2,\R)_+$.  Consider $\gamma\in \DC$, with monodromy $h\in \PSL(2,\R)$, and let $L=p(\gamma),\ \wt{h}=q(\gamma)$. The stabilizer group $(\Diff_\Z(\wt\CC)\times \PSL(2,\R))_\gamma$ consists of all 
 $(\FF,g)$ such that
 \[ \gamma(\FF(x))=g\cdot \gamma(x)\]
 for all $x\in \wt\CC$. 	 Since the $\PSL(2,\R)$-action on $\D(\CC)$ is free, the map 
 $(\FF,g)\mapsto \FF$ induces an isomorphism 
 \begin{equation}\label{eq:general1}  (\Diff_\Z(\wt\CC)\times \PSL(2,\R))_\gamma\cong \Diff_\Z(\wt\CC)_L.\end{equation}
The pre-image of $\kappa\in \Diff_\Z(\wt\CC)_L$ under this map is $(\kappa,h)$. Hence, 
\begin{equation}\label{eq:general4}
 \Diff_+(\CC)_L=(\Diff_\Z(\wt\CC)\times \PSL(2,\R))_\gamma/\Z\end{equation}
where $\Z$ is the subgroup generated by $(\kappa,h)$.
On the other hand, the projection $(\FF,g)\mapsto g$ induces a surjection
\begin{equation}\label{eq:general3}  (\Diff_\Z(\wt\CC)\times \PSL(2,\R))_\gamma\longrightarrow \PSL(2,\R)_h\end{equation}
with kernel  $\Diff_\Z(\wt\CC)_\gamma$. (Note that the stabilizer of $\wt{h}$ equals that of its image $h$.) 
	That is, 
\begin{equation}\label{eq:general2}\PSL(2,\R)_h=\Diff_\Z(\wt\CC)_{L}/\Diff_\Z(\wt\CC)_\gamma.\end{equation}
With these preparations, we obtain the following description of the stabilizer groups:

\begin{proposition}\label{prop:stabilizers}
 Let $\gamma\in\DC$ be as above, written as $\gamma=(\sin\phi:\ \cos\phi)$.
\begin{itemize}
	\item If  $\tau(\wt{h})>0$, there is an isomorphism of stabilizer groups 
\[ 	\wt{\SL}(2,\R)_{\wt{h}}\stackrel{\cong}{\lra} 
(\Diff_\Z(\wt\CC)\times \PSL(2,\R))_\gamma
%\Diff_\Z(\wt\CC)_{L}
,\ \ \ \wt{g}\mapsto (\FF_{\wt{g}},g)\]
%\[ 	\wt{\SL}(2,\R)_{\wt{h}}\stackrel{\cong}{\lra} (\Diff_\Z(\wt\CC)\times \PSL(2,\R))_\gamma,\ \ \ \wt{g}\mapsto (\FF_{\wt{g}},g)\]
with the unique  $\FF_{\wt{g}}$ such that
$\phi\circ \FF_{\wt{g}}=\wt{g}\cdot \phi$. It induces an isomorphism 
\[ \Diff_+(\CC)_{L}\cong \wt{\SL}(2,\R)_{\wt{h}}/\{\wt{h}^n,\ n\in\Z\}.\] 
The stabilizer group $\Diff_\Z(\wt\CC)_\gamma$  is a cyclic group generated by the unique diffeomorphism $\FF$ such that 
$\phi(\FF(x))=\phi(x)+\pi$.

\item If   $\tau(\wt{h})=0$, there is an isomorphism  
\[ \PSL(2,\R)_h\stackrel{\cong}{\lra} (\Diff_\Z(\wt\CC)\times \PSL(2,\R))_\gamma
%\Diff_\Z(\wt\CC)_{L}
,\ \ \ g\mapsto (\FF_g,g)\]
%\[ \PSL(2,\R)_h\stackrel{\cong}{\lra} (\Diff_\Z(\wt\CC)\times \PSL(2,\R))_\gamma,\ \ \ g\mapsto (\FF_g,g)\]
	where $\FF_g$ is defined as $\phi\circ \FF_g=\wt{g}\cdot \phi$, with 
 $\wt{g}$ the unique lift of translation number $\tau(\wt{g})=0$. 
  %for which $\wt{g}\cdot \phi$ has the same range as $\phi$. 
	It induces an isomorphism 
\[\Diff_+(\CC)_{L}\cong \PSL(2,\R)_{h}/\{ h^n,\ n\in\Z\}.\] 
The stabilizer group $\Diff_\Z(\wt\CC)_\gamma$  is trivial. 
\end{itemize}
\end{proposition}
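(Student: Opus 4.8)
The plan is to translate the defining condition of the stabilizer into the coordinate $\phi$ and exploit that $\phi$ is a diffeomorphism onto its image. Writing $\gamma=(\sin\phi:\cos\phi)$ and lifting the $\PSL(2,\R)$-action on $\RP(1)$ to the $\wt{\SL}(2,\R)$-action on $\R=\wt{\RP}(1)$, a pair $(\FF,g)$ lies in $(\Diff_\Z(\wt\CC)\times\PSL(2,\R))_\gamma$ precisely when there is a lift $\wt{g}\in\wt{\SL}(2,\R)$ of $g$ with $\phi\circ\FF=\wt{g}\cdot\phi$; connectedness of the range of $\phi$ makes this lift unique once we demand genuine equality rather than equality modulo $\pi$. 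The key input is Lemma \ref{lem:image1}: when $\tau(\wt{h})>0$ the increasing map $\phi\colon\R\to\R$ is a global diffeomorphism, whereas when $\tau(\wt{h})=0$ it is a diffeomorphism onto the bounded open interval $(a,b)$ cut out by two successive fixed points $a<b$ of $\wt{h}$. This dichotomy is exactly what produces the two cases of the Proposition. Throughout I will use that the $\wt{\SL}(2,\R)$-action on $\R$ is \emph{faithful}, its only trivially-acting element being the identity.

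For $\tau(\wt{h})>0$, I would set $\FF_{\wt{g}}=\phi^{-1}\circ(\wt{g}\cdot)\circ\phi$, a diffeomorphism of $\R$ for \emph{every} $\wt{g}$, and check that $\FF_{\wt{g}}$ commutes with $\kappa$ (i.e.\ lies in $\Diff_\Z(\wt\CC)$) iff $\wt{g}\wt{h}\cdot t=\wt{h}\wt{g}\cdot t$ for all $t$; surjectivity of $\phi$ and faithfulness reduce this to $\wt{g}\in\wt{\SL}(2,\R)_{\wt{h}}$. Thus $\wt{g}\mapsto(\FF_{\wt{g}},g)$ is a well-defined homomorphism from the centralizer, and the same two properties show it is bijective onto the stabilizer (for surjectivity one adjusts the lift of a given $g$ by a central $r_{\pi m}$ so that $\phi\circ\FF=\wt{g}\cdot\phi$ holds on the nose). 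Feeding this into \eqref{eq:general4} and \eqref{eq:general3}: the pair $(\kappa,h)$ corresponds to $\wt{h}$, since $\phi\circ\kappa=\wt{h}\cdot\phi$, giving $\Diff_+(\CC)_L\cong\wt{\SL}(2,\R)_{\wt{h}}/\{\wt{h}^n\}$; and the kernel $\Diff_\Z(\wt\CC)_\gamma$ corresponds to the central elements $\{r_{\pi n}\}\subset\wt{\SL}(2,\R)_{\wt{h}}$, whose generator $r_\pi$ yields the diffeomorphism characterized by $\phi(\FF(x))=\phi(x)+\pi$.

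For $\tau(\wt{h})=0$, boundedness of the range forces a single admissible lift. Here $\FF_{\wt{g}}=\phi^{-1}\circ(\wt{g}\cdot)\circ\phi$ is a self-map of $\R$ only if $\wt{g}$ preserves $(a,b)$, hence fixes the endpoints $a,b$. The decisive computation is that any lift $\wt{g}$ of a centralizer element $g\in\PSL(2,\R)_h$ satisfies $\wt{g}\cdot a=a+\pi\tau(\wt{g})$: since $g$ fixes the $h$-fixed point of which $a$ is a lift, $\wt{g}$ permutes the discrete set $\{a+\pi k\}$ order-preservingly and commutes with $r_\pi$, so $\wt{g}\cdot a=a+\pi m$ with $m$ independent of $k$, and iterating identifies $m$ with $\tau(\wt{g})$ via \eqref{eq:translationnumber}. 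Consequently $\wt{g}$ fixes $a$ iff $\tau(\wt{g})=0$, singling out the unique $\tau=0$ lift. I would then verify that $g\mapsto\wt{g}$ is a homomorphism on the connected, simply connected abelian group $\PSL(2,\R)_h$ (a one-parameter subgroup $\cong\R$), that $\FF_g$ commutes with $\kappa$ (using that both $\wt{g}$ and $\wt{h}$ are $\tau=0$ lifts fixing $a$, which kills the central ambiguity between $\wt{g}\wt{h}$ and $\wt{h}\wt{g}$), and that $g\mapsto(\FF_g,g)$ is bijective onto the stabilizer. Since now $\wt{h}=q(\gamma)$ is itself the $\tau=0$ lift of $h$, the pair $(\kappa,h)$ corresponds to $h$, yielding $\Diff_+(\CC)_L\cong\PSL(2,\R)_h/\{h^n\}$ via \eqref{eq:general4}, while the kernel $\Diff_\Z(\wt\CC)_\gamma$ from \eqref{eq:general3} corresponds to $g=e$ and is therefore trivial.

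The main obstacle is the $\tau(\wt{h})=0$ case, specifically the lift-selection: one must show both that preservation of the bounded interval $(a,b)$ forces $\tau(\wt{g})=0$, and that this $\tau=0$ lift depends multiplicatively on $g$. The cleanest route is the identity $\wt{g}\cdot a=a+\pi\tau(\wt{g})$ together with the connectedness of $\PSL(2,\R)_h$; establishing the latter uses the standard description of centralizers of hyperbolic and parabolic elements of $\PSL(2,\R)$ as connected (indeed simply connected) one-parameter subgroups. Everything else --- faithfulness of the action on $\R$, the homomorphism property, the surjectivity of $\wt{g}\mapsto(\FF_{\wt g},g)$, and the bookkeeping through \eqref{eq:general4} and \eqref{eq:general3} --- is routine once $\phi$ is recognized as a diffeomorphism onto its image.
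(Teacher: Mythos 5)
Your proposal is correct and follows essentially the same route as the paper's proof: both cases rest on Lemma \ref{lem:image1} (so that $\phi$ is a diffeomorphism onto $\R$, respectively onto an interval bounded by lifts of fixed points of $h$), the correspondence is defined by the equation $\phi\circ\FF=\wt g\cdot\phi$, periodicity of $\FF$ is checked by the same commutation computation, and the quotient descriptions follow from \eqref{eq:general4} and \eqref{eq:general3}. The only difference is that you fill in details the paper leaves implicit --- notably the identity $\wt g\cdot a=a+\pi\tau(\wt g)$, which justifies both the existence of the $\tau=0$ lift and the fact that it preserves the range of $\phi$ in the $\tau(\wt h)=0$ case.
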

\begin{proof}
\begin{itemize}
	\item Case 1:  $\tau(\wt{h})>0$, so $\phi$ is a diffeomorphism onto $\R$.  
	If $(\FF,g)$ stabilizes $\gamma$, there is a 
	unique lift $\wt{g}$ of $g$ such that 
	\begin{equation}\label{eq:fdef}  \phi(\FF(x))=\wt{g}\cdot \phi(x).\end{equation}
	Conversely, given  $\wt{g}\in \wt{\SL}(2,\R)_{\wt{h}}$,
	%with image $g\in \PSL(2,\R)_h$, 
	use \eqref{eq:fdef} to define 
	$\FF$. This is well-defined since $\phi$ is a diffeomorphism, and 
	$\FF$ is periodic by the calculation
	\[ \phi(\FF(\kappa(x)))=\wt{g}\cdot \phi(\kappa(x))=\wt{g}\wt{h}\cdot \phi(x)=\wt{h}\wt{g}\cdot \phi(x)=\wt{h}\cdot \phi(\FF(x))
	=\phi(\kappa(\FF(x)).\]
	We hence obtain an isomorphism $\Diff_\Z(\wt\CC)_L\cong (\Diff_\Z(\wt\CC)\times \PSL(2,\R))_\gamma\cong \wt{\SL}(2,\R)_{\wt{h}}$. This isomorphism takes $\kappa$ to 
	$\wt{h}$. Using \eqref{eq:general3}, this gives the description of 
	$\Diff_+(\CC)_{L}$.
	
	\item Case 2: $\tau(\wt{h})=0$, so $\phi$ is a diffeomorphism onto an open interval between two fixed points of 
	the action of $\wt{h}$ on $\R$, of length $\pi/2$ if $h$ is hyperbolic and $\pi$ if $h$ is parabolic. 
	If $(\FF,g)$ stabilizes $\gamma$, there is, as 
	before, a unique lift $\wt{g}$ such that \eqref{eq:fdef} holds. 
	But this equation means, in particular, that $\wt{g}$ preserves the range of $\phi$, and so it has translation number zero.  
 Conversely, given $g\in \PSL(2,\R)_h$, let $\wt{g}$ be the unique lift of translation number $0$. The equation \eqref{eq:fdef} uniquely determines $\FF$. 	The same calculation as in Case 1 shows that $\FF$ is periodic. 
	Then $(\FF,g)$ stabilizes $\gamma$. 
	
The resulting isomorphism  $\Diff_\Z(\wt\CC)_L\cong (\Diff_\Z(\wt\CC)\times \PSL(2,\R))_\gamma\cong \PSL(2,\R)_h$ takes 
	$\kappa$ to $h$. Using \eqref{eq:general3}, this gives the description of 
	$\Diff_+(\CC)_{L}$.
\end{itemize}

The description of $\Diff_\Z(\wt\CC)_\gamma$ follows from the above, since the stabilizer consists of all $\FF$ such that $ (\FF,e)\in  (\Diff_\Z(\wt\CC)\times \PSL(2,\R))_\gamma$.
\end{proof}

\begin{example}
If $\wt{h}=r_{\pi n}$ is a central element, then $\wt{\SL}(2,\R)_{\wt{h}}=\wt{\SL}(2,\R)$. Taking the quotient by the subgroup generated by $\wt{h}$,  we see that the stabilizer $\Diff_+(\CC)_L$ is the $n$-fold cover of $\PSL(2,\R)$. If $\wt{h}=r_{\alpha}$ with $\alpha>0,\ \alpha\neq n\pi$, then $\wt{\SL}(2,\R)_{\wt{h}}=\wt{\SO}(2)=\R$. Taking the quotient by the subgroup generated by $\wt{h}$,  we see that the stabilizer $\Diff_+(\CC)_L$ is $\R/\alpha\Z\cong \SO(2)$. In a similar way, one obtains explicit descriptions of the stabilizers for hyperbolic and elliptic elements, matching the calculations of  
\cite{bal:coa}.
\end{example}

We have seen that $\Diff_\Z(\wt\CC)_\gamma$ is a cyclic subgroup if $\tau(\wt{h})>0$, and trivial if $\tau(\wt{h})=0$. 
For a better understanding of how these stabilizer groups fit together, take  $\CC=S^1=\R/\Z$, and let
\[ \tau(\FF)=\lim_{N\to \infty}\Big(\f{1}{N}\FF^N(x_0)\Big)\]
be the Poincar\'{e} translation number \cite{ghy:gro} of $\FF\in \on{Diff}_\Z(\R)$. This is independent of $x_0$, and we shall choose $x_0=0$. 

Consider $\gamma=(\sin\phi: \cos\phi)\in \DC$, with lifted monodromy $\wt{h}=q(\gamma)$ satisfying 
$\tau(\wt{h})>0$. Let 
$\FF\in \on{Diff}_\Z(\R)_\gamma$ be the generator of the stabilizer of $\gamma$, so that 
\[ \phi(\FF(x))=\phi(x)+\pi.\] 
We will show
\begin{equation}\label{eq:product} \tau(\wt{h})\tau(\FF)=1.\end{equation}
To see this, observe first that $\tau(\wt{h})$ may be computed as
\[ \tau(\wt{h})=\lim_{N\to \infty}\f{\wt{h}^N\cdot \phi(0)}{\pi N}=\lim_{N\to \infty}\f{\phi(N)}{\pi N}=\lim_{N\to \infty}\f{\phi(x_N)}{\pi x_N}\]
where $x_N$ is any sequence with $\lim_{N\to \infty}x_N=\infty$. (The last equality follows since $\phi$ is increasing.) In particular, we may take $x_N=F^N(0)$, 
which gives 
\[ 1=\lim_{N\to \infty}\f{\phi(0)+\pi N}{\pi N}=
\lim_{N\to \infty}\f{\phi(\FF^N(0))}{\pi N}
=\lim_{N\to \infty}\f{\phi(\FF^N(0))}{\pi\FF^N(0)}\f{\FF^N(0)}{N}=
\tau(\wt{h})\tau(\FF)\]
as claimed. 
This is consistent with Example \ref{ex:nontrivial}. 

\begin{example}\label{ex:babyexample}
	It may be puzzling that the $\Diff_\Z(    \wt\CC)$-action on $\DC$ results in a 
	smooth orbit space $\wt{\SL}(2,\R)_+$, despite having non-constant stabilizers. This is possible since the action is not proper.  For a finite-dimensional example, let $N=\R^2/\sim$ be the quotient under the equivalence relation generated by $(x,y)\sim (x,y+\f{1}{x})$ for $x\neq 0$. 
	By the standard criterion for quotients of manifolds, $N$ is a smooth Hausdorff manifold; one may picture it as a union of circles $\R/\f{1}{x}\Z$, which for $x=0$ degenerates to $\R$.  
	The $\R$-action $t\cdot (x,y)=(x,y-t)$ on $\R^2$  descends to a locally free $\R$-action on $N$, 
	with orbit space $N/\R=\R$. The stabilizer of $[(x,y)]\in N$ under this action is trivial when $x=0$, and equal to $\f{1}{x}\Z$ when 
	$x\neq 0$. This space $N$ will appear again in Appendix \ref{app:a}, as an example of a symplectic groupoid. 
\end{example}

\subsection{Local sections} 
To complete our discussion of the geometry of the space of developing maps, let us show that the 
 quotient map $q\colon \DC\to \wt{\SL}(2,\R)_+$  is a submersion, in the sense that it admits local sections. We may assume $\CC=S^1=\R/\Z$. Local sections can be obtained by the following observations:
\begin{enumerate}
	\item For fixed $\phi_0\in \R$, with image $\gamma_0\in \RP(1)=\R/\pi\Z$, let 
	\[ U=\{\wt{h}\in \wt{\SL}(2,\R)|\ \wt{h}\cdot\phi_0>\phi_0,\ \ \tau(\wt{h})<1\}.\]
	The first condition shows that $U$ is contained in $\wt{\SL}(2,\R)_+$ (see Proposition \ref{prop:image}), while the second condition ensures that $\wt{h}$ is the homotopy  class of a unique exponential path $t\mapsto \exp(tA)$, for $A\in \g$. 
	Hence, the map taking $\wt{h}\in U$ to the path $\gamma(x)=\exp(xA)\cdot \gamma_0$,  defines a section 
	of $q$ over $U$. The $U$'s of this form (for choices of $\phi_0$) cover the subset of $\wt{\SL}(2,\R)_+$ given by $0\le \tau(\wt{h})<1$. 
	\item We obtain similar sections $\gamma(x)=\exp(xA)\cdot (0:1)$ 
	over  each component  of the set of elliptic elements, using the fact that every elliptic element  
	lies in a unique 1-parameter subgroup. 
	\item Paths of the form $\gamma(x)=\exp(x A)\exp(\pi n x)\cdot (0:1)$, with $A\in\g$ sufficiently close to $0$, 
	define a section over an open neighborhood of the central element $r_{\pi n}\in \wt{\SL}(2,\R)$.  
	\item 
	Finally, a section on an open neighborhood of $\{h_{\beta,n},\, \beta>0\}$ for fixed  $n\in \N$ is given by a modification of 
	\eqref{ex:feher}: One takes paths 	
	\[\gamma(x)=\exp(x A)\exp(\f{\beta}{\pi n} J_3)\exp(x \pi n J_1)\cdot (0:1)
	\]
	where $A\in\mf{sl}(2,\R)$ are matrices with $\det(A)=-\beta^2<0$, sufficiently close to a diagonal matrix of the form 
	$\on{diag}(\beta,-\beta)$ with $\beta>0$. 
\end{enumerate}

\section{Morita equivalence for the  coadjoint Virasoro action}\label{sec:mor}
We shall now exhibit $\DC$ as a bimodule for a Morita equivalence of quasi-symplectic groupoids. Background on Morita equivalence of groupoids, and the extension to quasi-symplectic groupoids, is provided in Appendix \ref{app:a}.  

\subsection{Groupoids}\label{subsec:actiongroupoidvirasoro}
By applying Example \ref{ex:relevant} to the principal $\PSL(2,\R)$-bundle $p\colon \DC\to \vir^*_1(\CC)$ with the $\PSL(2,\R)$-equivariant submersion 
$q\colon \DC\to \wt{\SL}(2,\R)_+$, we obtain a Morita equivalence 
\begin{equation}\label{eq:corr0c}
\xymatrix{ \G_1\ar[d]<2pt>\ar@<-2pt>[d]& \DC\ar[dl]\ar[dr] & \G_2 \ar[d]<2pt>\ar@<-2pt>[d]\\
	\vir^*_1(\CC) & & \wt{\SL}(2,\R)_+
}\end{equation}
between the action groupoid $\G_2=\PSL(2,\R)\ltimes \wt{\SL}(2,\R)_+\rra \wt{\SL}(2,\R)_+$, and the groupoid
\begin{equation}\label{eq:g1}
 \G_1=(\DC\times_{\wt{\SL}(2,\R)_+}\DC)/\PSL(2,\R)\rra \vir^*_1(\CC).\end{equation}
The groupoid $\G_1$, presented here as a quotient by $\PSL(2,\R)$ of the submersion groupoid, 
 may also be seen as a quotient of the action groupoid 
 \[ \wt{\G}_1=\Diff_\Z(\wt\CC)\ltimes \vir^*_1(\CC)\rra \vir^*_1(\CC).\]
\begin{proposition}
There is an exact sequence of groupoids over $\vir^*_1(\CC)$, 
\begin{equation}\label{eq:surjective}
1\to \ca{K}\to \wt{\G}_1\to \G_1\to 1
\end{equation}
where $\ca{K}$ is the family of discrete groups 
\begin{equation}\label{eq:ksubgroupoid} 
\ca{K}_L=\Diff_\Z(\wt\CC)_\gamma,\ \ \ p(\gamma)=L.\end{equation}
\end{proposition}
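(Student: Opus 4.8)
The plan is to build an explicit surjective morphism of groupoids
\[ \Psi\colon \wt{\G}_1\lra \G_1 \]
covering the identity on the common base $\vir^*_1(\CC)$, and to identify the arrows sent to units with the family $\ca{K}$. Given an arrow $(\FF,L)$ of $\wt{\G}_1=\Diff_\Z(\wt\CC)\ltimes\vir^*_1(\CC)$, with source $L$ and target $\FF\cdot L$, I would choose any $\gamma\in\DC$ with $p(\gamma)=L$ and set
\[ \Psi(\FF,L)=[\,\FF\cdot\gamma,\ \gamma\,]\in\G_1. \]
Since the $\Diff_\Z(\wt\CC)$- and $\PSL(2,\R)$-actions commute and $q$ is constant on $\Diff_\Z(\wt\CC)$-orbits, we have $q(\FF\cdot\gamma)=q(\gamma)$, so $(\FF\cdot\gamma,\gamma)\in\DC\times_{\wt{\SL}(2,\R)_+}\DC$; moreover $p(\FF\cdot\gamma)=\FF\cdot L$, so source and target match. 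First I would check that $\Psi$ is well defined: a different lift of $L$ has the form $g\cdot\gamma$ with $g\in\PSL(2,\R)$, and commutativity of the two actions gives $(\FF\cdot(g\cdot\gamma),\,g\cdot\gamma)=g\cdot(\FF\cdot\gamma,\gamma)$, which represents the same class in $\G_1$. Smoothness of $\Psi$ is then obtained by choosing $\gamma$ through a local section of the principal $\PSL(2,\R)$-bundle $p$.

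Next I would verify that $\Psi$ is a homomorphism. It sends the unit $(e,L)$ to $[\gamma,\gamma]$, the unit of $\G_1$ at $L$. For composability, represent a composable pair $(\FF_2,\FF_1\cdot L)\circ(\FF_1,L)=(\FF_2\FF_1,L)$ by the single lift $\gamma$ over $L$: then $\Psi(\FF_1,L)=[\FF_1\cdot\gamma,\gamma]$ and, taking $\FF_1\cdot\gamma$ as the lift of $\FF_1\cdot L$, $\Psi(\FF_2,\FF_1\cdot L)=[\FF_2\FF_1\cdot\gamma,\FF_1\cdot\gamma]$; these compose in $\G_1$ to $[\FF_2\FF_1\cdot\gamma,\gamma]=\Psi(\FF_2\FF_1,L)$. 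Surjectivity of $\Psi$ is exactly the statement that $q$ is the quotient map for the $\Diff_\Z(\wt\CC)$-action: given $[\gamma_1,\gamma_2]\in\G_1$ we have $q(\gamma_1)=q(\gamma_2)$, hence $\gamma_1=\FF\cdot\gamma_2$ for some $\FF\in\Diff_\Z(\wt\CC)$, and then $\Psi(\FF,p(\gamma_2))=[\gamma_1,\gamma_2]$.

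Finally I would identify the kernel and deduce exactness. An arrow $(\FF,L)$ maps to a unit iff $[\FF\cdot\gamma,\gamma]=[\gamma,\gamma]$, i.e. iff there is $g\in\PSL(2,\R)$ with $g\cdot\gamma=\gamma$ and $g\cdot(\FF\cdot\gamma)=\gamma$; since the $\PSL(2,\R)$-action on $\DC$ is free, the first equation forces $g=e$, and the second then reads $\FF\cdot\gamma=\gamma$, i.e. $\FF\in\Diff_\Z(\wt\CC)_\gamma=\ca{K}_L$. The same freeness argument shows more generally that $\Psi(\FF,L)=\Psi(\FF',L)$ iff $(\FF')^{-1}\FF\in\ca{K}_L$, so that $\Psi$ descends to an isomorphism $\wt{\G}_1/\ca{K}\cong\G_1$; this is the content of the displayed exact sequence. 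Independence of $\ca{K}_L$ of the choice of $\gamma$ over $L$, and normality of $\ca{K}$, again follow from commutativity of the two actions, while the fact that $\ca{K}$ is a family of discrete groups — cyclic when $\tau(q(\gamma))>0$ and trivial when $\tau(q(\gamma))=0$ — is the description of $\Diff_\Z(\wt\CC)_\gamma$ recorded in Proposition \ref{prop:stabilizers}. The only point demanding real care is the interpretation of ``exact sequence of groupoids'': the work is to see that $\Psi$ realizes $\G_1$ as the quotient $\wt{\G}_1/\ca{K}$, and this rests entirely on the freeness of the $\PSL(2,\R)$-action, which collapses the otherwise delicate non-constant stabilizer bookkeeping.
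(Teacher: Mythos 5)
Your proposal is correct and follows essentially the same route as the paper: the same morphism $(\FF,p(\gamma))\mapsto[\FF\cdot\gamma,\gamma]$, surjectivity deduced from transitivity of the $\Diff_\Z(\wt\CC)$-action on the fibers of $q$, and the kernel identified as $\ca{K}$ via freeness of the $\PSL(2,\R)$-action. The paper states this in three sentences; your write-up simply makes explicit the well-definedness, homomorphism, and quotient-identification checks that the paper leaves implicit.
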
Recall again  (Proposition \ref{prop:stabilizers})  that \eqref{eq:ksubgroupoid} is trivial when $\tau(q(\gamma))=0$, and is a cyclic subgroup otherwise. 
\begin{proof}
By definition, $\G_1$ consists of pairs of developing maps  having the same lifted monodromy, 
up to simultaneous transformation by $\PSL(2,\R)$. Since $\Diff_\Z(\wt\CC)$ acts transitively on the fibers of $q$, this defines a 
surjective groupoid morphism  $\wt{\G}_1\to \G_1$ taking $(\FF,p(\gamma))$ to the $\PSL(2,\R)$-orbit of $(\FF\cdot\gamma,\gamma)$. The kernel of this groupoid morphism is $\ca{K}$. 
\end{proof}

\begin{remark}\label{rem:hyperbolic}
The picture simplifies when we restrict to the open subset $\on{Hyp}_0\subset \wt{\SL}(2,\R)_+$ consisting of hyperbolic elements of translation number $\tau=0$. Letting  $\ca{R}_0\subset \vir^*_1(\CC)$ be the corresponding invariant subset, we obtain a Morita equivalence between the action groupoids $\Diff_\Z(\wt{\CC})\ltimes \ca{R}_0\rra \ca{R}_0$ and $\PSL(2,\R)\times \on{Hyp}_0\rra \on{Hyp}_0$.
\end{remark}

\subsection{The 2-form $\varpi_\D$}
In this section, we will promote \eqref{eq:corr0c} to a Morita equivalence of quasi-symplectic groupoids. This involves a 2-form $\varpi_\D$ 
on $\DC$, which is `natural' in the sense that its definition does not involve choices, and in particular does not depend on a  parametrization of $\CC$. Throughout, we will work with the metric
	$\l X,Y\r=2\on{tr}(XY)$ on $\g$.

\begin{theorem}\label{th:propertiesofvarpi}
There is a natural 2-form $\varpi_\D\in \Omega^2(\DC)$, with the following properties:
 \begin{enumerate}
 	\item The exterior differential is 
 	\[ \d\varpi_\D=q^*\eta\]
 	where $\eta=\f{1}{6} \on{tr}(\theta^L\wedge [\theta^L,\theta^L])
% 	\f{1}{12}\l\theta^L,[\theta^L,\theta^L]\r
 	$ is the Cartan 3-form on $\wt{\SL}(2,\R)$.
 	\item $\varpi_\D$ is $\PSL(2,\R)$-invariant, and its  contractions with generating vector fields for $X\in\mf{sl}(2,\R)$ 
 	are 	\[ \iota(X_\D)\varpi_\D=\on{tr}( X\, q^*(\theta^L+\theta^R)).\]
 	\item  $\varpi_\D$ is $\Diff_\Z(\wt\CC)$-invariant, and its  contractions with generating vector fields for 
 	$v\in \Vect(S^1)$   are	
 	\begin{equation} \iota(v_\D)\varpi_\D=-p^* \int_{\CC}\ (\d L)\,v.\end{equation}
 \end{enumerate} 
 \end{theorem}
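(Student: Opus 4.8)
The plan is to produce $\varpi_\D$ by the explicit coordinate expression \eqref{eq:theformula} (after fixing an identification $\CC\cong S^1$) and then to verify the three listed properties, which together characterize it; the conceptual origin of the formula and the rigorous control of its global features I would defer to the Drinfeld--Sokolov construction of Section \ref{sec:ds}. Writing a developing map as $\gamma=(\sin\phi:\cos\phi)$ and regarding $\phi$, $\phi'$ and the Hill potential $\T=(\phi')^2+\hh\S(\phi)$ as functions on $S^1$ that depend on the point of $\DC$, I read \eqref{eq:theformula} as the $2$-form whose value on two tangent variations $(\delta_1\phi,\delta_2\phi)$ is $\int_0^1\big(\delta_1\T\,\tfrac{\delta_2\phi}{\phi'}-\delta_2\T\,\tfrac{\delta_1\phi}{\phi'}\big)$ plus the boundary contributions, where $\delta\T$ denotes the variation of $\T$ induced by $\delta\phi$. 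I would verify the three properties in the order (c), (b), (a), since the existence statement is really carried by the first two contraction identities.

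Property (c), the $\Diff_\Z(\wt\CC)$-contraction, is the most direct. The generating vector field $v_\D$ of $v\in\Vect(S^1)$ acts by reparametrization, so $\iota(v_\D)\d\phi=-v\,\phi'$, whence $\iota(v_\D)\tfrac{\d\phi}{\phi'}=-v$, and by \eqref{eq:infhillaction} $\iota(v_\D)\d\T=-D_Lv$. Substituting into $\iota(v_\D)\big(\d\T\wedge\tfrac{\d\phi}{\phi'}\big)=(\iota(v_\D)\d\T)\tfrac{\d\phi}{\phi'}-\d\T\,(\iota(v_\D)\tfrac{\d\phi}{\phi'})$ and integrating over $S^1$ yields a bulk term $\int_0^1(-D_Lv)\tfrac{\d\phi}{\phi'}$ together with $\int_0^1 v\,\d\T$; expressing $\d\T$ through $\d\phi$ and integrating by parts --- with the bilinear concomitant \eqref{eq:bl} recording the endpoint data via \eqref{eq:intbyparts} --- should collapse these to $-\int_0^1(\d L)\,v=-p^*\int_\CC(\d L)\,v$, the boundary terms in \eqref{eq:theformula} being precisely what cancels the surviving endpoint contributions. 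For property (b), invariance of $p$ under $\PSL(2,\R)$ gives $\iota(X_\D)\d\T=0$, so the contraction reduces to the second half of the wedge plus the boundary terms; organized by the quasi-periodicity $\phi(\kappa x)=\wt h\,\phi(x)$ with $\wt h=q(\gamma)$, this depends only on the monodromy, and $\PSL(2,\R)$-equivariance of $q$ under conjugation identifies it with $\on{tr}(X\,q^*(\theta^L+\theta^R))$ --- the group-valued (Cartan--Dirac) moment-map condition for the metric $\l X,Y\r=2\on{tr}(XY)$.

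For property (a) I would compute $\d\varpi_\D$ directly: the exterior derivative of the bulk Schwarzian integrand, combined with $\d$ of the boundary terms, should assemble into the pullback $q^*\eta$ of the Cartan $3$-form, and this is the computation in which the central extension --- the Gelfand--Fuchs cocycle \eqref{eq:GF} --- manifests itself geometrically. The $\PSL(2,\R)$- and $\Diff_\Z(\wt\CC)$-invariance of $\varpi_\D$ then follow from invariance of the defining data together with the contraction formulas of (b), (c) and Cartan's formula $\L=\d\iota+\iota\d$, once $\d\varpi_\D=q^*\eta$ is in hand.

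The main obstacle is twofold, and both parts are global rather than pointwise. First, \emph{naturality}: expression \eqref{eq:theformula} is written in a chosen coordinate $x$, and one must show that the boundary terms are forced exactly so as to make the whole expression invariant under orientation-preserving reparametrizations of $\CC$, hence descend to a parametrization-independent $\varpi_\D\in\Omega^2(\DC)$. Second, the identity $\d\varpi_\D=q^*\eta$ entangles the infinite-dimensional variational calculus with the endpoint bookkeeping, and checking it by hand is delicate. I expect the clean route to both is the Drinfeld--Sokolov reduction of Section \ref{sec:ds}: realizing $\vir^*_1(S^1)$ as a reduction of the dual of the central extension $\wh{L\g}$ for $\g=\mf{sl}(2,\R)$ (following \cite{dri:kdv,seg:geo}), the space $\DC$ and the form $\varpi_\D$ arise by reduction from the loop-group bimodule and its $2$-form of \cite{al:mom,al:ati}, so that all three properties descend from the established loop-group analogues, with naturality automatic because the construction upstairs is already coordinate-free.
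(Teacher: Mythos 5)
Your endgame strategy is exactly the paper's: the paper proves this theorem by exhibiting $\varpi_\D$ as the pullback $\wh{\iota}^*\varpi_\P$ of the loop-group 2-form under the Drinfeld--Sokolov embedding (Theorem \ref{th:pullback}), so that (a), (b), (c) become immediate consequences of the corresponding properties of $\varpi_\P$ (Theorem \ref{th:varpiforloopgroups}); your sketches of the contraction identities for (b) and (c) are also consistent with what the paper encodes in the 1-form $\Theta=-\d\phi/\phi'$ and its contraction formulas (Proposition \ref{prop:propertiestheta}).

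As written, however, your proposal has a gap precisely at the joint between its two halves. You \emph{define} $\varpi_\D$ by the coordinate expression \eqref{eq:theformula}, but you only ``expect'' that this expression agrees with the form produced by Drinfeld--Sokolov reduction; without that identification, the loop-group properties you invoke apply to a form that has not been shown to be yours. This identification is not automatic --- it is the content of Theorem \ref{th:pullback}, whose proof is a genuine computation: one writes the DS embedding in terms of the normalized lift, $\gamma\mapsto\tau=\left[\begin{smallmatrix}u_1'&u_2'\\ u_1&u_2\end{smallmatrix}\right]$, computes $\check{\Xi}=(\d\tau)\tau^{-1}$ in terms of $\Theta,\Theta',\Theta''$, and matches both the integrands, $\hh\l\Xi,\Xi'\r=\Theta\,(D_L\Theta)$, and the boundary terms against \eqref{eq:varpis1}. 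Relatedly, your ``first obstacle'' (naturality of the coordinate formula) is handled in the paper by an intermediate step your proposal lacks: rewriting \eqref{eq:theformula} coordinate-freely as $\varpi_\D=-\int_{x_0}^{\kappa(x_0)}(D_L\Theta)\,\Theta-B_{L,x_0}(\Theta,\kappa^*\Theta)$ and proving independence of the base point $x_0$ (Proposition \ref{prop:independence}), using the $\Z$-invariance of $D_L\Theta$ and the integration-by-parts identity \eqref{eq:intbyparts}; it is this rewriting, not the bare coordinate expression, that makes both the invariance claims and the pullback computation tractable. The cleanest repair is to drop \eqref{eq:theformula} as the definition and set $\varpi_\D:=\wh{\iota}^*\varpi_\P$ outright --- then your final paragraph, fleshed out with the equivariance of $\wh{\iota}$ and the compatibility of $q$ with the lifted monodromy map on $\P(Q)$, is a complete proof of the theorem as stated, and \eqref{eq:theformula} becomes a separate (optional) computation.
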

The proof of this result is contained in the subsequent sections. Proposition \ref{prop:independence} presents a coordinate-free formula for $\varpi_\D$, 
which in particular gives its invariance properties. Theorem \ref{th:pullback} exhibits $\varpi_\D$ as the pullback of a similar 2-form $\varpi_\P$ for loop groups; the formulas for contractions, and the exterior differential, are immediate consequences of the properties of $\varpi_\P$. In Section \ref{subsec:ds} we will give a conceptual explanation of 
this description, as a Drinfeld-Sokolov reduction. 

\begin{corollary}\label{cor:symplectic}
	The 2-form $\omega_{can}$ of the symplectic groupoid $\wt{\G}_1\rra \vir^*_1(\CC)$ (cf.~ Example \ref{ex:centralext})	descends to a symplectic form 
	$\omega_1$ on $\G_1$.
\end{corollary}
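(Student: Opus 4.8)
The plan is to exploit that the kernel $\ca{K}$ of \eqref{eq:surjective} is a bundle of \emph{discrete} groups: by Proposition \ref{prop:stabilizers} each fiber $\ca{K}_L=\Diff_\Z(\wt\CC)_\gamma$ is either trivial (when $\tau(q(\gamma))=0$) or infinite cyclic (when $\tau(q(\gamma))>0$). Writing $\Pi\colon\wt{\G}_1\to\G_1$ for the quotient morphism from \eqref{eq:surjective}, the fibers of $\Pi$ are the orbits of the free right-translation action of $\ca{K}$ on the source fibers of $\wt{\G}_1$; as these orbits are discrete, $\Pi$ is a surjective \'etale groupoid morphism covering the identity of $\vir^*_1(\CC)$. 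Hence a 2-form $\omega_1$ on $\G_1$ with $\Pi^*\omega_1=\omega_{can}$ exists exactly when $\omega_{can}$ is invariant under the deck transformations of $\Pi$, and any such $\omega_1$ is automatically closed, nondegenerate (since the local diffeomorphism $\Pi$ induces pointwise isomorphisms on forms) and multiplicative (since $\Pi$ is a groupoid morphism); that is, $(\G_1,\omega_1)$ is then a symplectic groupoid. Everything therefore reduces to the invariance of $\omega_{can}$.

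The deck transformations are the right translations $R_b$ by the local bisections $b$ of $\wt{\G}_1$ taking values in $\ca{K}$; over the region $\tau>0$ these are generated by the section $L\mapsto(\FF_0,L)$ determined by $\phi\circ\FF_0=\phi+\pi$ (Proposition \ref{prop:stabilizers}). Pulling the multiplicativity identity $m^*\omega_{can}=\pr_1^*\omega_{can}+\pr_2^*\omega_{can}$ of the symplectic groupoid $\wt{\G}_1$ (Example \ref{ex:centralext}) back along $g\mapsto(g,b(\sz(g)))$ gives, for any bisection $b$, the identity $R_b^*\omega_{can}=\omega_{can}+\sz^*(b^*\omega_{can})$. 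Since $b$ is valued in the isotropy bundle $\ca{K}$ it covers $\mathrm{id}_{\vir^*_1(\CC)}$, so $R_b^*\omega_{can}=\omega_{can}$ for all such $b$ if and only if $b^*\omega_{can}=0$, i.e. if and only if $\ca{K}\subset\wt{\G}_1$ is an isotropic (indeed Lagrangian) subgroupoid. This isotropy of $\ca{K}$ is the crux, and the step I expect to be the main obstacle: the multiplicativity computation only shows that $b\mapsto b^*\omega_{can}$ is additive over sections of $\ca{K}$, which on an infinite-cyclic fiber does not by itself force the vanishing.

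To settle the isotropy I would proceed as follows. The form $\varpi_\D$ of Theorem \ref{th:propertiesofvarpi} already produces the candidate $\omega_1$ directly on $\G_1$: on $\DC\times_{\wt{\SL}(2,\R)_+}\DC$ the 2-form $\pr_1^*\varpi_\D-\pr_2^*\varpi_\D$ is closed (the two copies of $q^*\eta$ cancel because $q\circ\pr_1=q\circ\pr_2$) and $\PSL(2,\R)$-basic (by part (b) of the theorem, since $q\circ\pr_1=q\circ\pr_2$ annihilates the contractions), so it descends to a closed 2-form $\omega_1$ on $\G_1$ whose nondegeneracy and base Poisson structure are read off from parts (b)--(c). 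Now $\wt{\G}_1$ is source-simply-connected, its source fibers being copies of $\Diff_\Z(\wt\CC)=\wt{\Diff}_+(\CC)$, which is simply connected; and both $\omega_{can}$ and $\Pi^*\omega_1$ are multiplicative symplectic forms on this one source-simply-connected groupoid inducing the same Poisson structure on $\vir^*_1(\CC)$. By the uniqueness of the integrating multiplicative symplectic form on a source-simply-connected symplectic groupoid they must agree, $\Pi^*\omega_1=\omega_{can}$, which is precisely the assertion that $\omega_{can}$ descends to $\omega_1$ (and confirms that $\ca{K}$ is Lagrangian). Should one wish to avoid invoking $\omega_1$, the isotropy $b^*\omega_{can}=0$ can instead be checked by hand from the coordinate form of $\omega_{can}$ on $\Diff_\Z(\wt\CC)\ltimes\on{Hill}(S^1)$, using that the generator $\FF_0$ acts by the rigid shift $\phi\mapsto\phi+\pi$ and hence fixes the Hill potential \eqref{eq:T}; I expect the relevant terms to cancel.
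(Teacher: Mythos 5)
Your construction of $\omega_1$ is exactly the paper's (descent of $\pr_1^*\varpi_\D-\pr_2^*\varpi_\D$ from $\DC\times_{\wt{\SL}(2,\R)_+}\DC$ to $\G_1$), and your reduction of the corollary to the single identity $\Pi^*\omega_1=\omega_{can}$ is also the paper's route; the preliminary \'etale/deck-transformation discussion and the identity $R_b^*\omega_{can}=\omega_{can}+\sz^*(b^*\omega_{can})$ are correct but become superfluous once that identity is established. The gap is the uniqueness principle you invoke to obtain $\Pi^*\omega_1=\omega_{can}$: it is \emph{false} that two multiplicative symplectic forms on one and the same source-simply-connected groupoid which induce the same Poisson structure on the base must coincide. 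The paper's own Example \ref{ex:baby2} furnishes a counterexample: on $T^*\R=\R^2\rra\R$ with fiberwise addition, the forms $\d x\wedge \d y$ and $f(x)\,\d x\wedge \d y$ (any positive function $f$) are both closed, multiplicative and nondegenerate, and both induce the zero Poisson structure on $\R$; they are intertwined by the groupoid automorphism $(x,y)\mapsto (x,f(x)y)$ but are distinct forms. Uniqueness of the source-simply-connected symplectic groupoid integrating a Poisson manifold holds only up to isomorphism, which is not what your step requires.

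What is true --- and what the paper uses, quoting \cite[Corollary 3.4]{bur:int} (see the remark in the appendix) --- is that a closed multiplicative $2$-form on a source-\emph{connected} groupoid is determined by the induced bundle map $\rho\colon \on{Lie}(\G)\to T^*M$, $\rho(\sigma)=i_M^*\iota(\sigma^L)\omega$, not merely by the induced Poisson structure. Since $\wt{\G}_1$ and $\G_1$ share the Lie algebroid $\Vect(\CC)\ltimes \vir^*_1(\CC)$, your argument is completed by checking that the $\rho$-map of $\omega_1$ is $v\mapsto \int_\CC (\d L)\,v$, the same as for $\omega_{can}$: this follows by contracting $\pr_1^*\varpi_\D-\pr_2^*\varpi_\D$ with the left-invariant vector field $(0,v_\D)$ and applying Theorem \ref{th:propertiesofvarpi}(c) --- precisely the computation the paper performs. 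You do gesture at this data (``nondegeneracy and base Poisson structure are read off from parts (b)--(c)''), so the repair is immediate; but as written the crucial identification rests on an incorrect lemma, and your fallback --- a direct coordinate check that the generator of $\ca{K}$ is isotropic, which you only ``expect'' to cancel --- is left unverified, so it cannot carry the proof either.
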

\begin{proof}
As recalled in Appendix \ref{app:a}, a closed multiplicative 2-form $\omega$ on a Lie groupoid $\G\rra M$ determines a map 
$\rho\colon A=\on{Lie}(\ca{G})\to T^*M$, given on the level of sections by $\rho(\sigma)=i_M^*\iota(\sigma^L)\omega$ where 
$\sigma^L$ is the left-invariant vector field  corresponding to $\sigma$, and $i_M$ is the inclusion of units. 
If $\G$ is source-connected, $\omega$ is uniquely determined by $\rho$. 

The Lie algebroid of $\wt{\G}_1$ is the action algebroid
$A=\Vect(\CC)\ltimes \vir^*_1(\CC)$, and the map $\varrho$ is given on constant sections by 
\begin{equation}\label{eq:varrho} \varrho(v)=\l dL,\,v\r=\int_\CC (d L)\, v.\end{equation}
(See Example \ref{ex:centralext}.) The groupoid $\G_1$ has the same Lie algebroid, $A$. By the above, there  can be at most one closed multiplicative form $\omega_1\in \Omega^2(\G_1)$ having the same map \eqref{eq:varrho}, and its pullback to  $\wt{\G}_1$ must coincide  with $\omega_{can}$. In particular, $\omega_1$ must be symplectic. 

To construct $\omega_1$ we use the description \eqref{eq:g1} of the groupoid. 
The 2-form $\pr_1^*\varpi_\D-\pr_2^*\varpi_\D$ on 
\begin{equation}\label{eq:subgroupoid}
\DC\times_{\wt{\SL}(2,\R)_+} \DC\rra \DC.
\end{equation}
(where $\pr_1,\pr_2$ are the projections from the fiber product to the two factors)
is multiplicative, and 
\[ \d(\pr_1^*\varpi_\D-\pr_2^*\varpi_\D)=-\pr_1^*q^*\eta+\pr_2^*q^*\eta=0\]
since $q\circ \pr_1=q\circ \pr_2$ by definition of the fiber product. 
This 2-form is invariant under the diagonal $\PSL(2,\R)$-action, and in fact is $\PSL(2,\R)$-basic since 
\[ \iota(X_\D,X_\D)(\pr_1^*\varpi_\D-\pr_2^*\varpi_\D)=\on{tr}\big( X\, (\pr_1^*q^*(\theta^L+\theta^R)-\pr_2^*q^*(\theta^L+\theta^R))\big)=0\]
It hence descends to a closed multiplicative 2-form $\omega_1\in \Omega^2(\G_1)$. The left invariant vector field on \eqref{eq:subgroupoid} corresponding to $v\in \Vect(\CC)$ is given by $(0,v_\D)$, and descends to the corresponding vector field on $\G_1$. By part (c) of Theorem \ref{th:propertiesofvarpi} we have 
\[ \iota\big((0,v_\D)\big)(\pr_1^*\varpi_\D-\pr_2^*\varpi_\D)=\pr_2^* p^* \int_\CC (d L)v.\]
This confirms that the map $\varrho$ for $\omega_1$ is given by \eqref{eq:varrho}.
\end{proof}	

Corollary  \ref{cor:symplectic} shows that $\G_1\rra \vir^*_1(\CC)$ is a symplectic groupoid.  On the other hand, $\PSL(2,\R)\ltimes \wt{\SL}(2,\R)\rra \wt{\SL}(2,\R)$ comes equipped with a 2-form $\omega_2$ making it a quasi-symplectic groupoid integrating the 
Cartan-Dirac structure \cite{al:mom,xu:mom} (see Example \ref{ex:gvalued} in the appendix).  
The groupoid $\G_2$ is simply its restriction to the invariant open subset $\wt{\SL}(2,\R)_+$. Using our construction of $\omega_1$, 
the following is  a reformulation of Theorem \ref{th:propertiesofvarpi}:

\begin{theorem}\label{th:morita}
	The 2-form $\varpi_\D\in \Omega^2(\DC)$ defines a Morita equivalence of quasi-symplectic groupoids between $(\G_1,\omega_1)$ and $(\G_2,\omega_2)$. 
\end{theorem}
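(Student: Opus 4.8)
The goal is to prove Theorem \ref{th:morita}, which asserts that the 2-form $\varpi_\D$ makes $\DC$ into a Hilsum-Skandalis bimodule establishing a Morita equivalence of the quasi-symplectic groupoids $(\G_1,\omega_1)$ and $(\G_2,\omega_2)$. The plan is to observe that the underlying Morita equivalence of \emph{bare} Lie groupoids is already established in diagram \eqref{eq:corr0c}, so the task reduces to checking the differential-geometric compatibility conditions that upgrade this to an equivalence of quasi-symplectic groupoids. According to the definition reviewed in Appendix \ref{app:a} (following Xu \cite{xu:mom}), these conditions are exactly the three properties collected in Theorem \ref{th:propertiesofvarpi}: the relation $\d\varpi_\D=q^*\eta$ controlling the exterior differential, and the two contraction formulas for the generating vector fields of the $\PSL(2,\R)$-action and the $\Diff_\Z(\wt\CC)$-action. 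Thus the theorem is, as the text says, a \emph{reformulation} of Theorem \ref{th:propertiesofvarpi}, and I would make this precise by matching each clause of the bimodule axioms against one of the three properties.

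First I would recall the precise axioms for a quasi-symplectic Morita bimodule $(\DC,\varpi_\D)$ connecting $(\G_1,\omega_1)$ on the left (via $p$) and $(\G_2,\omega_2)$ on the right (via $q$). These require: (i) $p$ and $q$ are surjective submersions realizing $\DC$ as a biprincipal bibundle, which is precisely the content of diagram \eqref{eq:corr0c}; (ii) a coboundary-type relation $\d\varpi_\D = p^*(\text{something})+q^*\eta$, where the left groupoid is genuinely symplectic so the left 3-form vanishes and we are left with $\d\varpi_\D=q^*\eta$; and (iii) the moment-map/contraction conditions stating that the interior products of $\varpi_\D$ with the infinitesimal generators of the two commuting actions reproduce the target moment data of $\omega_1$ and $\omega_2$. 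I would then invoke Theorem \ref{th:propertiesofvarpi}(a) for clause (ii), Theorem \ref{th:propertiesofvarpi}(b) for the $\G_2$-side contraction condition, and Theorem \ref{th:propertiesofvarpi}(c) together with the construction of $\omega_1$ in Corollary \ref{cor:symplectic} for the $\G_1$-side.

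The one genuinely substantive point is verifying that $\omega_1$ as constructed in Corollary \ref{cor:symplectic} is the correct left partner, i.e.\ that the multiplicative 2-form on $\G_1$ induced by $\pr_1^*\varpi_\D-\pr_2^*\varpi_\D$ is compatible with $\varpi_\D$ in the bibundle sense. Here I would reuse the computation already carried out in the proof of Corollary \ref{cor:symplectic}: the form $\pr_1^*\varpi_\D-\pr_2^*\varpi_\D$ on the submersion groupoid $\DC\times_{\wt{\SL}(2,\R)_+}\DC$ is closed (since $q\circ\pr_1=q\circ\pr_2$) and $\PSL(2,\R)$-basic (by the contraction formula in part (b)), hence descends to $\omega_1$ on $\G_1$; and its left-invariant vector-field contractions match the map $\varrho$ of \eqref{eq:varrho} via part (c). The symmetric statement for $\omega_2$ on the right is standard, $\G_2$ being the action groupoid carrying the Cartan-Dirac 2-form (Example \ref{ex:gvalued}), with the contraction data recorded in part (b). Assembling these, the action of $\G_1$ on $\DC$ preserves $\varpi_\D$ up to the pullback of $\omega_1$, and likewise for $\G_2$ and $\omega_2$, which is exactly Xu's compatibility axiom.

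The main obstacle I anticipate is purely bookkeeping rather than conceptual: one must be careful that the sign and normalization conventions in the contraction formulas of Theorem \ref{th:propertiesofvarpi}, in the definition of the Cartan-Dirac 2-form $\omega_2$ on $\G_2$, and in the canonical symplectic form produced by Corollary \ref{cor:symplectic}, all align with the orientation of the bibundle (left action via $p$, right action via $q$) demanded by the Appendix's definition. In particular the appearance of $\theta^L+\theta^R$ in part (b) is what matches the two fixed points of the $\G_2$-moment condition, and I would check that contracting with the \emph{diagonal} generator $(X_\D,X_\D)$ kills $\pr_1^*\varpi_\D-\pr_2^*\varpi_\D$ (as shown in Corollary \ref{cor:symplectic}), confirming the descent to $\G_1$ is compatible rather than obstructed. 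Once these conventions are pinned down, the theorem follows by directly quoting the bimodule axioms and the three established properties, with no further computation required.
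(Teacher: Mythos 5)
Your strategy coincides with the paper's: the paper gives no free-standing proof of Theorem \ref{th:morita}, but presents it as a reformulation of Theorem \ref{th:propertiesofvarpi}, given that $\omega_1$ is constructed by descent of $\pr_1^*\varpi_\D-\pr_2^*\varpi_\D$ (Corollary \ref{cor:symplectic}) and that $\omega_2$ is the restriction of the Cartan--Dirac form (Example \ref{ex:gvalued}). Your matching --- bare Morita equivalence from \eqref{eq:corr0c}, part (a) for the exterior-derivative axiom, parts (b), (c) together with the descent construction for the compatibility of the bi-action --- is exactly that reformulation, and your handling of the $\G_1$-side via Corollary \ref{cor:symplectic} (equivalently, the recovery property \eqref{eq:recovery}) is the right mechanism.

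There is, however, one genuine omission: you have misstated Xu's definition. As recorded in the paper's appendix, a Morita equivalence of quasi-symplectic groupoids requires \emph{three} conditions on $(\DC,\varpi_\D)$: (i) multiplicativity $\A^*\varpi=\pr_{\G_1}^*\omega_1+\pr_Q^*\varpi+\pr_{\G_2}^*\omega_2$; (ii) $\d\varpi=-\Phi_1^*\eta_1+\Phi_2^*\eta_2$; and (iii) the weak nondegeneracy $\ker(\varpi)\cap\ker(T\Phi_1)\cap\ker(T\Phi_2)=0$. Your clause (iii) (the contraction/moment-map conditions) is the infinitesimal form of Xu's condition (i), not of Xu's (iii); the nondegeneracy axiom never appears in your proof, and since it is not among the statements of Theorem \ref{th:propertiesofvarpi}, quoting that theorem does not dispose of it. It does follow quickly from part (c): any element of $\ker(Tq)|_\gamma$ has the form $v_\D|_\gamma$ with $v\in\Vect(\CC)$ (the $q$-fibers are the $\Diff_\Z(\wt\CC)$-orbits), and if it also lies in $\ker(\varpi_\D)$ then $0=\iota(v_\D)\varpi_\D|_\gamma=-\big(p^*\int_\CC(\d L)\,v\big)\big|_\gamma$; since $T_\gamma p$ is surjective and the integration pairing between $\Vect(\CC)$ and quadratic differentials is nondegenerate, this forces $v=0$, so in fact $\ker(\varpi_\D)\cap\ker(Tq)=0$, which is stronger than (iii). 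But this argument must be supplied; as written, you verify only two of the three axioms. A lesser point to state explicitly: passing from invariance plus contractions to the global multiplicativity (i) uses connectedness of $\PSL(2,\R)$ and $\Diff_\Z(\wt\CC)$ --- for the $\G_2$-side this is the equivalence recorded in the appendix for action groupoids --- combined with your descent argument on the $\G_1$-side.
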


\subsection{Hamiltonian spaces}
As a consequence of Theorem \ref{th:morita}, one obtains a 1-1 correspondence between Hamiltonian spaces for the quasi-symplectic groupoids $(\G_i,\omega_i),\ i=1,2$. 
Since $\G_2$ is finite-dimensional, we are mainly interested in its finite-dimensional Hamiltonian spaces. The  Hamiltonian spaces for 
$(\G_2,\omega_2)$ 
%the quasi-symplectic groupoid $\PSL(2,\R)\ltimes \wt{\SL}(2,\R)\rra \wt{\SL}(2,\R)$ 
are  $\PSL(2,\R)$-manifolds $M$, equipped with an invariant 2-form $\omega_M$ 
and an equivariant map $\Phi_M\colon M\to \wt{\SL}(2,\R)_+$ satisfying $\d\omega_M=-(\Phi_M)^*\eta$ and the  momentum map condition 
\[  \iota(X_M)\omega_M=-(\Phi_M)^*\on{tr}\big((\theta^L+\theta^R)X\big)\]
along with the minimal degeneracy condition $\ker(\omega_M)\cap \ker T\Phi_M=0$. 
%The Hamiltonian spaces for $\G_2$ are obtained by the additional requirement that $\Phi_M$ takes values in $\wt{\SL}(2,\R)_+\subset \wt{\SL}(2,\R)$. 
Consider on the other hand the Hamiltonian spaces for $(\G_1,\omega_1)$.  Due to the quotient map $\Diff_\Z(\wt\CC)\ltimes \vir^*_1(\CC)=\wt{\G}_1\to \G_1$, these are in particular
Hamiltonian spaces for $\wt{\G}_1$. The latter are the \emph{Hamiltonian Virasoro spaces}: 
infinite-dimensional 
$\Diff_\Z(\wt\CC)\cong \wt{\Diff}_+(\CC)$-manifolds $\M$ with invariant closed 2-forms $\omega_\M$, such that $\omega_\M$ is (weakly) nondegenerate and satisfies the moment map condition 
\[ \iota(v_\M)\omega_\M=-(\Phi_\M)^*\int_C (\d L)v.\] 
These data define a Hamiltonian space for $\G_1=\wt{\G}_1/\ca{K}$ (see \eqref{eq:ksubgroupoid}) if and only if the subgroupoid $\ca{K}$ acts trivially. In other words, the stabilizers of points $u\in \M$ must satisfy 
\[ \Diff_\Z(\wt\CC)_u\supseteq \ca{K}_{\Phi_\M(u)}.\]
The condition that the associated space $M=(\M\times_{\vir^*_1(\CC)} \DC)/\Diff_\Z(\wt\CC)$ be finite-dimensional amounts to an assumption that $T_u\Phi_\M$ has finite dimensional kernel everywhere; as explained in the appendix (see \eqref{eq:kernelidentification}) 
this kernel is identified with $\ker(T_m\Phi_M)$, where $m$ is a point corresponding to $u$.  
\begin{remark}
	$\dim(\ker T_u\Phi_\M)<\infty$ if and only if  $\M$ is `transversally finite-dimensional', in the sense that  
	the $ \Diff_\Z(\wt\CC)$-orbits have finite codimension. Indeed, the correspondence identifies the normal spaces to the orbits through $u$ and $m$. 
\end{remark}

\begin{remark}
Following Remark \ref{rem:hyperbolic}, the situation simplifies when $\G_2$ is restricted to the open subset 	
$\on{Hyp}_0\subset \wt{\SL}(2,\R)$ of hyperbolic elements of translation number $0$, and $\G_1$ to the corresponding subset 
$\ca{R}_0\subset \vir^*_1(\CC)$. The Hamiltonian $\G_2|_{\on{Hyp}_0}$-spaces are just the Hamiltonian $\G_2$-spaces with $\Phi_M(M)\subset \on{Hyp}_0$. Since the quotient map $\wt{\SL}(2,\R)\to \PSL(2,\R)$ restricts to a diffeomorphism from $\on{Hyp}_0$
to the hyperbolic elements of $\PSL(2,\R)$, these can be further viewed as q-Hamiltonian $\PSL(2,\R)$-spaces with $\PSL(2,\R)$-valued moment map taking values in hyperbolic elements. On the other hand,  since $\ca{K}$ is trivial over $\ca{R}_0$, the Hamiltonian $\G_1|_{\ca{R}_0}$-spaces 
are identified with Hamiltonian Virasoro spaces whose moment map take values in $\ca{R}_0$, with no condition on stabilizers (but still assuming that $T_u\Phi$ has finite dimensional kernel everywhere). Many examples of this type may be constructed from $\PSL(2,\R)$-representation varieties. 
\end{remark}

\subsection{The Maurer-Cartan form on $\on{Diff}_+(\CC)$}
The infinite-dimensional group $\Diff_+(\CC)$ 
 has $\Vect(\CC)=|\Omega|^{-1}_\CC$ as its Lie algebra, and hence carries a left-invariant Maurer-Cartan form 
\[ \theta^L_{\Diff_+(\CC)}\in \Omega^1\big(\Diff_+(\CC),|\Omega|^{-1}_\CC\big).\]
By definition, this is 
characterized by left invariance together with the property $\iota(v)\theta^L_{\Diff_+(\CC)}|_{\on{id}}=v$ 
for all $v\in \Vect(\CC)$. 
%We will use the same notation for the Maurer-Cartan form on the universal covering group $\Diff_\Z(\wt\CC)=\wt{\Diff}_+(\CC)$.

To give a concrete formula, choose a parametrization $\CC\cong S^1=\R/\Z$. 
The evaluation map 
\[ \on{Diff}_\Z(S^1)\to S^1=\R/\Z,\ \FF\mapsto \FF(x)\] 
will be denoted  simply by $\FF(x)$; thus $\d\,\FF(x)$ is an ordinary 1-form on $\on{Diff}(S^1)$. Letting $x$ vary, this defines 
\[ \d\FF\in \Omega^1(\Diff_+(S^1),|\Omega|^0_{S^1}).\]
Similarly, $x\mapsto \f{\p \FF}{\p x}\ \partial x$ is a positive density. Similar to our conventions in \ref{subsec:coordinate}, we denote this density by 
\[ \FF'\in  \Omega^0(\Diff_+(S^1),|\Omega|^1_{S^1}).\]

\begin{lemma}
	The left-invariant Maurer-Cartan form on $\on{Diff}_+(S^1)$ is given by 
	\begin{equation}\label{eq:MCformula}
	 -\f{\d \FF}{\FF'}\in \Omega^1(\Diff_+(S^1),|\Omega|^{-1}_{S^1}).
	 \end{equation}
\end{lemma}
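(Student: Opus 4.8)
The plan is to use the fact that a left-invariant $1$-form on a Lie group is uniquely determined by left-invariance together with its value at the identity; it therefore suffices to check that $\alpha:=-\f{\d\FF}{\FF'}$, which is well-defined since $\FF'>0$, is left-invariant and restricts correctly at $\on{id}$. Write $L_g$ for left translation $\FF\mapsto g\circ\FF$ on $\Diff_+(S^1)$.

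First I would verify left-invariance by the chain rule. Since $g$ is held fixed along the group directions, differentiating the evaluation $(g\circ\FF)(x)=g(\FF(x))$ gives $\d(g\circ\FF)=(g'\circ\FF)\,\d\FF$ in $\Omega^1(\Diff_+(S^1),|\Omega|^0_{S^1})$, while $(g\circ\FF)'=(g'\circ\FF)\,\FF'$ in $\Omega^0(\Diff_+(S^1),|\Omega|^1_{S^1})$, where $g'\circ\FF$ is the positive function $x\mapsto \f{\p g}{\p y}\big|_{y=\FF(x)}$. The common factor $g'\circ\FF$ cancels between numerator and denominator, so $L_g^*\alpha=\alpha$ for every $g$, and $\alpha$ is left-invariant.

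Next I would evaluate at the identity. At $\FF=\on{id}$ one has $\FF'=\partial x$, and for a curve $\FF_t$ through $\on{id}$ the contraction $\iota\,\d\FF$ equals the function $\f{\d}{\d t}\big|_{0}\FF_t(x)$; hence $\iota\,\alpha|_{\on{id}}=-\big(\f{\d}{\d t}\big|_{0}\FF_t(x)\big)\,|\partial x|^{-1}$. The remaining point is sign bookkeeping: under the identification of $T_{\on{id}}\Diff_+(S^1)$ with $\Vect(S^1)$ compatible with the action convention $(\FF\cdot\gamma)(x)=\gamma(\FF^{-1}(x))$ used throughout, the vector field associated to the tangent vector $\FF_t$ is $v=v(x)|\partial x|^{-1}$ with coefficient $v(x)=-\f{\d}{\d t}\big|_0\FF_t(x)$, the extra sign coming from the inverse in $\FF^{-1}$. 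With this identification $\iota\,\alpha|_{\on{id}}=v$, which is precisely the required normalization $\iota(v)\theta^L_{\Diff_+(\CC)}|_{\on{id}}=v$.

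The main obstacle is exactly this last sign. The naive identification of $T_{\on{id}}\Diff_+(S^1)$ with $\Vect(S^1)$ via $\FF_t\mapsto \f{\d}{\d t}\big|_0\FF_t(x)\,|\partial x|^{-1}$ would produce $-v$ rather than $v$, so the minus sign in \eqref{eq:MCformula} is not cosmetic: it records the standard fact that, for diffeomorphism groups, matching the Lie-theoretic conventions to the geometric action $\FF\cdot\gamma=\gamma\circ\FF^{-1}$ reverses the sign of the identification of tangent vectors at the identity with vector fields. Once this convention is fixed consistently with the rest of the paper, both defining properties hold and uniqueness of the Maurer-Cartan form finishes the proof.
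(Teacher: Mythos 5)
Your proof is correct, but it is organized differently from the paper's. The paper performs a single computation: it describes the left-invariant vector field $v^L$ attached to $v=f(x)\f{\p}{\p x}$ as the generator of the right translations $\FF\mapsto \FF\circ\exp(-tv)$, computes $\iota(v^L)\,\d\FF(x)=-\FF'(x)f(x)$, and thus $\iota(v^L)\bigl(-\f{\d\FF}{\FF'}\bigr)=v$ at \emph{every} point of the group; since the vectors $v^L|_\FF$ span each tangent space, this one identity is the defining property of $\theta^L$ and packages both left-invariance and the normalization at the identity. You instead split the verification: left-invariance via the chain-rule cancellation of the factor $g'\circ\FF$ (a clean computation the paper never writes out), followed by evaluation at $\on{id}$, where all the content sits in the identification $T_{\on{id}}\Diff_+(S^1)\cong\Vect(S^1)$. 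Your sign convention --- the curve $\FF_t$ corresponds to the vector field with coefficient $-\f{\d}{\d t}\big|_{0}\FF_t(x)$ --- is exactly the paper's: there the same sign is encoded in the minus sign of $\FF\mapsto\FF\circ\exp(-tv)$, i.e.\ the one-parameter subgroup attached to $v$ is $t\mapsto\exp(-tv)$. The one soft spot is your justification of that sign by appeal to the action convention $(\FF\cdot\gamma)(x)=\gamma(\FF^{-1}(x))$, which is heuristic rather than a derivation; a sharper internal anchor is that this is the unique identification for which $\Vect(S^1)$ carries its standard bracket, as confirmed by the Remark following the lemma, where $-\f{\d\FF}{\FF'}$ is shown to satisfy $\d\theta^L+\hh[\theta^L,\theta^L]=0$ with the usual bracket of vector fields (the naive identification would flip the sign in the Maurer--Cartan equation). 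With the convention pinned down this way, your two checks plus uniqueness give a complete proof; what your route buys is an explicit, separate confirmation that the formula is literally invariant under $\FF\mapsto g\circ\FF$, while the paper's route buys brevity and makes the normalization hold at all points simultaneously.
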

\begin{proof}
Let $v=f(x)\f{\p}{\p x}$ be a given vector field on $S^1$, and denote by $t\mapsto \exp(tv)\in \Diff_+(S^1)$ its flow. The resulting left-invariant vector field on the diffeomorphism group acts by right translations, $\FF\mapsto \FF\circ \exp(-tv)$. Hence 
\[ 	\iota(v^L)\d\FF(x)=\f{\p}{\p t}\big|_{t=0}\FF(\exp(-tv)(x))=-\f{\p \FF(x)}{\p x}\ f(x).\qedhere\]
\end{proof}

%Note that we can also check directly that the right hand side of \eqref{eq:MCformula} is left-invariant, since \[ \d ((\FF_1\circ \FF)(x))=\FF_1'(\FF(x))\ \d\,\FF(x),\ \ (\FF_1\circ \FF)'(x)=\FF_1'(\FF(x))\ \FF'(x)\]for fixed $\FF_1\in \on{Diff}_\Z(\R)$. 
\begin{remark}
We may also directly  verify that \eqref{eq:MCformula} is left-invariant, and  
satisfies the Maurer-Cartan equation 
$\d \theta^L+\hh[\theta^L,\theta^L]=0$:
\[ \d\Big(-\f{\d \FF}{\FF'}\Big)=\f{1}{(\FF')^2}\d \FF'\wedge \d \FF
=-\Big(-\f{\d \FF}{\FF'}\Big) \wedge \Big(-\f{\d \FF}{\FF'}\Big)'.\]
\end{remark}

Using the Maurer-Cartan form, we may write down the 2-form of the symplectic groupoid 
$\wt{\G}_1=\Diff_\Z(\R)\ltimes \vir^*_\mathsf{1}(S^1)\rra \vir^*_\mathsf{1}(S^1)$.  
Recall that the choice of parametrization defines a splitting of the central extension, 
 describing the latter in terms of the Gelfand-Fuchs cocycle \eqref{eq:GF}.  
The splitting also identifies $\vir^*_1(S^1)$ with the space of quadratic differentials (Hill potentials). 
We write 
\[ \T\in \Omega^0(\vir^*_1(S^1),|\Omega|^2_{S^1})\]
for the map taking an element of $\vir^*_1(S^1)$ to the corresponding Hill potential, thus  $\d\T$ is a 1-form on $\vir^*_1(S^1)$ with values in quadratic differentials. As a special case of \eqref{eq:omegaformula}, we obtain
 \begin{equation}\label{eq:symplecticgroupoidvirasoro}
  \omega_{can}=
 \int_{S^1}\d \T \wedge \f{\d \FF}{\FF'} +\int_{S^1}\T\  \f{\d \FF}{\FF'}  \wedge \Big(\f{\d \FF}{\FF'}\Big)'
 +\f{1}{2}\int_{S^1} \Big(\f{\d \FF}{\FF'}\Big)'''\wedge \f{\d \FF}{\FF'}.
 \end{equation}

\subsection{The distinguished 1-form  on $\DC$}
We return to the general setting, not using a choice of parametrization of $\CC=\wt\CC/\Z$.  There is a distinguished 1-form 
\begin{equation}\label{eq:Theta} 
\Theta\in \Omega^1\big(\DC,   \,   |\Omega|^{ -1}_{\wt\CC} \big)\end{equation}
whose evaluation on tangent vectors $V$ at  $\gamma\in \DC$ is given by \emph{minus} the image of $V$ 
under 
\begin{equation}\label{eq:identifications} T_\gamma \DC\hra 
 T_\gamma(C^\infty(\wt\CC,\RP(1))= 
\Gamma(\gamma^*T\RP(1))\cong \Gamma( T\wt\CC)=\Vect(\wt{\CC})=|\Omega|^{ -1}_{\wt\CC}.\end{equation}
%first map identifies elements of the tangent space with a subspace of the space of vector fields along $\gamma$.
%(The property reads as $\kappa^*V=q(\gamma)_*\big(V+\varrho_\gamma(X)\big)$, with $X=\iota_V q^*\theta^L\big|_\gamma$. But we won't need it.)
Here we used that 
$\gamma^*T\RP(1)\cong T\wt\CC$, since $\gamma$ is a local diffeomorphism. 
For an explicit formula, write elements of $\DC$ as $\gamma=(\sin\phi:\cos\phi)$.   We may think of $\phi$ as a function on $\DC$ with values in the vector space of 
$\R/\pi\Z$-valued functions on $\wt{\CC}$. Taking its exterior derivative gives a 1-form 
\[  \d\phi\in \Omega^1\big(\DC,\,   |\Omega|^{0}_{\wt\CC} \big).\]
%which no longer depends on the choice of branch of $\phi$. 
On the other hand, we may apply $\partial\colon  |\Omega|^{0}_{\wt\CC} \to 
|\Omega|^{1}_{\wt\CC}$  to obtain 
\[\phi'=\partial\phi\in \Omega^0\big(\DC,    |\Omega|^{1}_{\wt{\CC}} \big).\]
%again this does not depend on the branch. 
\begin{proposition}
	We have 
	\begin{equation}\label{eq:theta} \Theta=-\dfrac{\d\phi}{\phi'}.%in \Omega^1\big(\DC,   \,   |\Omega|^{ -1}_{\wt\CC} \big).
	\end{equation}
In terms of the normalized lifts (Definition \ref{def:normalized}) 
	\begin{equation}\label{eq:thetanormalizedlifts}
	\Theta=u_1\d u_2-u_2 \d u_1.\end{equation}
\end{proposition}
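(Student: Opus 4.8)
The plan is to verify both identities by a direct computation in the angle coordinate $\phi$ on $\RP(1)=\R/\pi\Z$, in which a developing map $\gamma=(\sin\phi:\cos\phi)$ is literally the $\R/\pi\Z$-valued function $\phi$ on $\wt\CC$. First I would unwind the definition of $\Theta$. A tangent vector $V\in T_\gamma\DC$ is realized as $V=\frac{d}{dt}\big|_{t=0}\gamma_t$ for a path $\gamma_t=(\sin\phi_t:\cos\phi_t)$ with $\gamma_0=\gamma$, so the corresponding section of $\gamma^*T\RP(1)$ is $\delta\phi:=\iota(V)\d\phi$, written against the coordinate field $\partial_\phi$. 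The chain \eqref{eq:identifications} transports this section to a vector field via the inverse of $\d\gamma$. Since $\gamma$ is an orientation-preserving local diffeomorphism, $\d\gamma(\partial_x)=\phi'\,\partial_\phi$ with $\phi'=\partial\phi\in|\Omega|^1_{\wt\CC}$ positive, whence $V$ is carried to $\delta\phi/\phi'\in|\Omega|^{-1}_{\wt\CC}$. Taking into account the global minus sign in the definition, $\iota(V)\Theta=-\delta\phi/\phi'=\iota(V)\big(-\d\phi/\phi'\big)$, which is \eqref{eq:theta}. A small point worth recording is that $\partial$ applied to the $0$-density $\phi$ is simply the de Rham differential along $\wt\CC$, hence independent of the auxiliary connection \eqref{eq:partial}; this is what makes the right-hand side of \eqref{eq:theta} natural.

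For \eqref{eq:thetanormalizedlifts} I would substitute the explicit normalized lift \eqref{eq:normalized}. Writing $a=(\partial\phi)^{-\hh}$, so that $u_1=a\sin\phi$ and $u_2=a\cos\phi$, and taking exterior derivatives on $\DC$,
\[ u_1\,\d u_2-u_2\,\d u_1=a\sin\phi\,(\cos\phi\,\d a-a\sin\phi\,\d\phi)-a\cos\phi\,(\sin\phi\,\d a+a\cos\phi\,\d\phi). \]
The terms proportional to $\d a$ cancel, and $\sin^2\phi+\cos^2\phi=1$ collapses the remainder to $-a^2\,\d\phi=-\d\phi/\phi'$, which equals $\Theta$ by the first part. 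The density weights are consistent: $u_1,u_2\in|\Omega|^{-\hh}_{\wt\CC}$, so $u_1\,\d u_2-u_2\,\d u_1$ is a $(-1)$-density, matching $\Theta\in\Omega^1(\DC,|\Omega|^{-1}_{\wt\CC})$.

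There is no genuine obstacle in this proof; the computations are short and elementary. The only points demanding care are bookkeeping ones: correctly tracking the density weights through the identification $\gamma^*T\RP(1)\cong T\wt\CC$, and keeping the global sign in the definition of $\Theta$ straight. It is also reassuring to note that the combination $u_1\,\d u_2-u_2\,\d u_1$ is unchanged under the residual sign ambiguity $u_i\mapsto -u_i$ of the normalized lift, in agreement with its value $-\d\phi/\phi'$ involving neither that sign nor a choice of $\partial$.
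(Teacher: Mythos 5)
Your proof is correct and follows essentially the same route as the paper: the first identity is obtained by unwinding the definition of $\Theta$ through the identification $\gamma^*T\RP(1)\cong T\wt\CC$ given by the local diffeomorphism $\gamma$ (which produces the factor $1/\phi'$ and the overall sign), and the second by substituting the normalized lift \eqref{eq:normalized} into $u_1\,\d u_2-u_2\,\d u_1$. The only difference is presentational: you carry out that substitution explicitly (with the cancellation of the $\d a$ terms), whereas the paper states the reduction in one line.
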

(In \eqref{eq:thetanormalizedlifts}, we are treating $u_i$ 
 as a $|\Omega|^{-\f{1}{2}}_{\wt{\CC}}$-valued functions on $\DC$. Strictly, the normalized lifts are only defined up to an overall sign; 
 thus $u_i$ are defined on a 	double cover $\wh{\D}(\CC)$.)
\begin{proof}
	To verify \eqref{eq:theta}, choose a local coordinate $t$ on $\wt\CC$, and let $V=f(t)\f{\p}{\p \phi}$ be a vector field along $\gamma$. The corresponding vector field 
	on $\wt{\CC}$ is 
	\[ f(t)\f{\p}{\p t}=\f{1}{\phi'(t)} f(t)\f{\p}{\p \phi}=-\iota_V \Theta.\]  
	The alternative expression \eqref{eq:thetanormalizedlifts} reduces to 
	\eqref{eq:theta} by using the  formula \eqref{eq:normalized} for the normalized lifts. 
\end{proof}
Note the similarity of \eqref{eq:theta} with the expression for the left-invariant Maurer-Cartan form on $\Diff_+(S^1)$. 
For the following result, let
\[ \Upsilon\colon \R^2\to \mf{sl}(2,\R),\ \ \mathbf{s}\mapsto  \hh \mathbf{s} (J_1 \mathbf{s})^\top
\]
with $J_1$ as in \eqref{eq:j} and with 
\[ \mathbf{s}=\left(\begin{array}{c} s_1 \\ s_2\end{array}\right).\] 
%\[ J_1=\left(\begin{array}{cc}0 & 1\\-1& 0	\end{array}\right),\ \ \mathbf{s}=\left(\begin{array}{c}s_1\\ s_2	\end{array}\right).\] 
Up to a factor, this is is the momentum map for the $\SL(2,\R)$-action on the symplectic vector space $(\R^2,\,\d s_1\wedge\d s_2)$. 
%By definition, this means \[ \iota(X_{\R^2})(\d s_1\wedge \d s_2)=-\d\on{tr}(X\,\Upsilon(s_1,s_2)).\]%
In terms of the $\SL(2,\R)$-invariant 1-form 
$s_1\d s_2-s_2\d s_1$, it satisfies
\begin{equation}\label{eq:XS}
\iota(X_{\R^2})(s_1\d s_2-s_2\d s_1)=2\on{tr}(X\,\Upsilon(\mathbf{s}))\end{equation}
for $X\in \g$. Given $u_i\in |\Omega|^{-\f{1}{2}}_{\wt{\CC}}$, we may form substitute 
\[ \mathbf{u}=\left(\begin{array}{c} u_1 \\ u_2\end{array}\right)\] 
for $\mathbf{s}$, defining a vector field 
\[ \on{tr}(X\,\Upsilon(\mathbf{u}))\in |\Omega|^{-1}_{\wt\CC}=\Vect(\wt{\CC}).\]
See Remark \ref{rem:vectorfields} below for the relevance of these vector fields.

\begin{proposition}[Properties of the form $\Theta$]\label{prop:propertiestheta} \phantom{.}
		\begin{enumerate}
		%\item\label{it:a}  In terms of normalized lifts $u_1,u_2\in \Omega^0(\DC,|\Omega|_\CC^{-\f{1}{2}})$, 
		\item  For $g\in \PSL(2,\R)$ and  $\FF\in \Diff_\Z(\wt\CC)$,
		\[g^*\Theta=\Theta,\ \ \ \  \FF^*\Theta=\Ad_\FF\Theta.\]
		\item\label{it:c1}  $\Theta$ satisfies the Maurer-Cartan equation 
		\[ \d\Theta+\hh [\Theta,\Theta]=0\]
		using the Lie bracket on $|\Omega|_{\wt\CC}^{-1}=\on{Vect}(\wt\CC)$. 
		\item\label{it:cb} 
		The contractions with the generating vector fields for the $\Diff_\Z(\wt\CC)$-action 
			on $\DC$ are given by 
		\[\iota(v_\D)\Theta=v\]
		for $v\in \Vect(\CC)$.	
		\item\label{it:cd} 
		The contractions with the generating vector fields for the $\PSL(2,\R)$-action 
		on $\DC$ are given by 
		\[\iota(X_\D)\Theta=2\on{tr}(X \Upsilon(\mathbf{u}))\]
		for $X\in \g$. 
	%	$\mathbf{u}=(u_1\ u_2)^\top$.
\item\label{it:ce} Under the $\Z$-action on $|\Omega|^{-1}_{\wt\CC}$, 
			 \[\kappa^*\Theta-\Theta=-2\on{tr}\left(
			q^*\theta^L\ \Upsilon(\mathbf{u}))
			\right) .\]
\item\label{it:cf} The image of $\Theta$ under the `covariant derivative' $D_L\colon  |\Omega|^{-1}_{\wt\CC} \to  |\Omega|^{2}_{\wt\CC} $ 
(see \eqref{eq:dv}) is given by 
\[
	 D_L\Theta=p^*\d L
\]
where  
$ \d L\in \Omega^1(\CC,|\Omega|^{2}_\CC )$
is the tautological 1-form of the affine space $\vir^*_1(\CC)=\on{Hill}(\CC)$. In particular, $D_L\Theta$ is $\Z$-invariant. 
	\end{enumerate}
\end{proposition}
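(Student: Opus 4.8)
The plan is to prove all six items from the two descriptions of $\Theta$ already available, \eqref{eq:theta} and \eqref{eq:thetanormalizedlifts}, switching between the coordinate formula $\Theta=-\d\phi/\phi'$ and the quadratic expression $\Theta=u_1\d u_2-u_2\d u_1=\mathbf{u}^\top J_1\d\mathbf{u}$ according to which is better adapted to the property at hand. The equivariance in (a) I would read off from the intrinsic definition \eqref{eq:identifications}: the two actions move $\gamma$ by post-composition (for $\PSL(2,\R)$) and pre-composition (for $\Diff_\Z(\wt\CC)$). Under the identification $T\wt\CC\cong \gamma^*T\RP(1)$ via $T\gamma$, post-composition by $g$ intertwines $T\gamma$ with $T(g\circ\gamma)=Tg\circ T\gamma$ so that the induced vector field is unchanged, giving $g^*\Theta=\Theta$; equivalently this is immediate from $\Theta=\mathbf{u}^\top J_1\d\mathbf{u}$ and $g^\top J_1 g=J_1$ for $g\in\SL(2,\R)$. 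Pre-composition by $\FF$ replaces the associated vector field by its pushforward, which is exactly $\Ad_\FF$ on $\Vect(\wt\CC)=|\Omega|^{-1}_{\wt\CC}$, giving $\FF^*\Theta=\Ad_\FF\Theta$. The Maurer--Cartan equation (b) I would obtain by a direct computation from $\Theta=-\d\phi/\phi'$, modelled on the Maurer--Cartan computation for the form $-\d\FF/\FF'$ on $\Diff_+(S^1)$: since $\d$ on $\DC$ commutes with $\partial_x$, the second-derivative terms cancel and one is left precisely with $\d\Theta(V,W)=-[\Theta(V),\Theta(W)]$.

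The contraction (c) follows at once from the coordinate formula together with the fact that the $\Diff_\Z(\wt\CC)$-action satisfies $\phi_{\FF\cdot\gamma}=\phi\circ\FF^{-1}$, whence $v_\D(\phi)=-\phi'\,v$ and $\iota(v_\D)\Theta=-v_\D(\phi)/\phi'=v$. Dually, (d) is cleanest from $\Theta=\mathbf{u}^\top J_1\d\mathbf{u}$: the $\PSL(2,\R)$-action is the linear $\SL(2,\R)$-action $\mathbf{u}\mapsto g\mathbf{u}$ on normalized lifts, so $\iota(X_\D)\Theta=\mathbf{u}^\top J_1(X_\D\mathbf{u})$, and substituting $\mathbf{s}\to\mathbf{u}$ in \eqref{eq:XS} identifies this with $2\on{tr}(X\,\Upsilon(\mathbf{u}))$. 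The failure of $\Z$-invariance (e) I would again extract from the quadratic form: quasi-periodicity $\gamma\circ\kappa=h\cdot\gamma$ and uniqueness of the normalized lift up to sign give $\kappa^*\mathbf{u}=\hat h\,\mathbf{u}$ for the $\SL(2,\R)$ monodromy matrix $\hat h$ (the sign being irrelevant since $\Theta$ is quadratic), with $\hat h^{-1}\d\hat h=q^*\theta^L$. Substituting and using $\hat h^\top J_1\hat h=J_1$ collapses the second term back to $\Theta$ and leaves $\kappa^*\Theta-\Theta=\mathbf{u}^\top J_1\,(q^*\theta^L)\,\mathbf{u}$, which is $-2\on{tr}(q^*\theta^L\,\Upsilon(\mathbf{u}))$ by \eqref{eq:XS}.

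The substantive statement is (f), and I would prove $D_L\Theta=p^*\d L$ structurally rather than by expanding the third-order operator $D_L$ in coordinates. Two ingredients feed into it. First, the defining relation $[\L_v,L]=-M_{D_L v}$ from \eqref{eq:infhillaction}, applied to a solution $u_i$ of $Lu_i=0$, yields the operator identity $L(\L_v u_i)=(D_L v)\,u_i$. Second, one needs the pointwise identity $\d u_i=\L_w u_i$ on $\wt\CC$, where $w=-\Theta=(T\gamma)^{-1}V$ is the vector field attached to a tangent vector $V\in T_\gamma\DC$: a variation of $\gamma$ in the direction $V$ is realized to first order by $\gamma\circ\Phi^w_t$ (with $\Phi^w_t$ the flow of $w$), and since the Wronskian is diffeomorphism-invariant the pulled-back lift $\mathbf{u}\circ\Phi^w_t$ remains normalized, so its derivative along the family is $\L_w\mathbf{u}$. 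Granting these, differentiating $Lu_i=0$ along $V$ gives $L(\d u_i)=-(p^*\d L)\,u_i$, while the two ingredients give $L(\d u_i)=L(\L_w u_i)=(D_L w)u_i=-(D_L\Theta)\,u_i$; comparing, and using that $u_1,u_2$ have no common zero, forces $D_L\Theta=p^*\d L$. The final assertion that $D_L\Theta$ is $\Z$-invariant is then automatic, since $p^*\d L$ takes values in the $\Z$-invariant densities $|\Omega|^2_\CC$.

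The step I expect to be the main obstacle is the identity $\d u_i=\L_w u_i$ used in (f). The subtlety is that the auxiliary family $\gamma\circ\Phi^w_t$ need not stay inside $\DC$ when $w$ fails to be $\Z$-invariant, so the clean \emph{within}-$\DC$ interpretation of $\d u_i$ is not directly available. I would therefore justify the identity either as a purely local, and hence curve-independent, computation in the $\phi$-coordinate using the explicit normalized lift \eqref{eq:normalized} (where it reduces to matching the Lie-derivative formula for $-\tfrac12$-densities against the linearization of $u_i$ in $\phi$), or by verifying the naturality of the normalized lift under the flow of $w$ directly from the diffeomorphism-invariance of the Wronskian. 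Once this lemma is in place, everything else in the Proposition is a short manipulation of the two closed-form expressions for $\Theta$.
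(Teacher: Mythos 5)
Your proposal is correct, and items (a)--(e) are essentially the paper's own proof: the same interplay of the two formulas \eqref{eq:theta} and \eqref{eq:thetanormalizedlifts}, with the $\SL(2,\R)$-invariance of $s_1\d s_2-s_2\d s_1$ (equivalently $g^\top J_1 g=J_1$) for (a), the contraction identity \eqref{eq:XS} for (d), and the transformation law $\kappa^*\mathbf{u}=A\mathbf{u}$ with $A^{-1}\d A=q^*\theta^L$ and $A^\top J_1A=J_1$ for (e). Your (b) computes with $\Theta=-\d\phi/\phi'$ where the paper computes with $u_1\d u_2-u_2\d u_1$; both are three-line checks and interchangeable.

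The genuine divergence is (f). The paper proves $D_L\Theta=p^*\d L$ by a coordinate computation: it establishes $\d\,\S(\phi)=-\big(\S(\phi)'\Theta+2\S(\phi)\Theta'+\Theta'''\big)$ and $\d\big((\phi')^2\big)=-\big(((\phi')^2)'\Theta+2(\phi')^2\Theta'\big)$ (a ``direct but lengthy calculation,'' in the authors' own words) and adds them against \eqref{eq:dlv}. You instead differentiate the relation $Lu_i=0$ over $\DC$, getting $L(\d u_i)=-(p^*\d L)u_i$, and combine the operator identity $L(\L_w u_i)=(D_L w)u_i$ (correct: it follows from $[\L_w,L]=-M_{D_Lw}$ applied to solutions, and localizes to $\wt\CC$) with the lemma $\iota_V\d\mathbf{u}=\L_w\mathbf{u}$ for $w=-\iota_V\Theta$; comparing and using that $u_1,u_2$ have no common zero gives the claim. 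This is the infinitesimal form of the equivariance of $p$, extended to non-periodic $w$, so it explains \emph{why} the identity holds rather than verifying it. The obstacle you flag -- that $\gamma\circ\Phi^w_t$ may leave $\DC$ when $w$ is not $\Z$-invariant -- is real, but it has a cleaner resolution than either of your two fixes: the normalized lift, the Wronskian, and hence $\Theta$ and your lemma require only that $\gamma$ be an orientation-preserving immersion $\wt\CC\to\RP(1)$, not quasi-periodicity; on that larger space the flow family does stay put, the lemma follows from naturality of the lift under diffeomorphisms (which is exactly the $\Diff$-equivariance argument you sketch), and one restricts to $\DC$ at the end. Alternatively your coordinate verification works and is short: with $u_1=(\phi')^{-1/2}\sin\phi$ and $w=\delta\phi/\phi'$, both $\delta u_1$ and $\L_w u_1$ equal $\delta\phi\,(\phi')^{-1/2}\cos\phi-\tfrac12\delta\phi'(\phi')^{-3/2}\sin\phi$, and similarly for $u_2$ -- far shorter than the Schwarzian computation. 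What the paper's route buys is an explicit variational formula for $\d\T$ in terms of $\phi$, of the kind that feeds directly into expressions like \eqref{eq:theformula}; what yours buys is a coordinate-free, structural proof whose only computation is a first-order Lie-derivative check.
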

\begin{proof}
\begin{enumerate}
	\item Equivariance under the action of $\Diff_\Z(\wt\CC)$ is clear from the coordinate-free definition. The $\PSL(2,\R)$-invariance follows from 
	\eqref{eq:thetanormalizedlifts}, since the 1-form $s_1\d s_2-s_2\d s_1\in \Omega^1(\R^2)$ is $\SL(2,\R)$-invariant. (It is the contraction 
	of the symplectic form $\d s_1\wedge \d s_2$ by the Euler vector field.) 
	\item 
	We may assume $\CC=S^1$, so that $\hh [\Theta,\Theta]=\Theta\wedge \Theta'$. 
	The property $u_2'u_1-u_1'u_2=-1$ gives $u_1\d u_2'-u_2\d u_1'+u_2' \d u_1-u_1'\d u_2=0$, and therefore	
	\begin{align*}
	\Theta\wedge \Theta'
	&=(u_1\d u_2-u_2\d u_1)\wedge (u_1'\d u_2-u_2'\d u_1+u_1\d u_2'-u_2\d u_1')\\
	&=2 (u_1\d u_2-u_2\d u_1)\wedge  (u_1'\d u_2-u_2'\d u_1)\\
	&=2(u_1u_2'-u_2u_1')\d u_1\wedge \d u_2\\
	&=-2 \d u_1\wedge \d u_2=-\d\Theta.
	\end{align*}	
	\item  This follows since  $\gamma_*(v)=-v_\D|_\gamma$, as vector fields along $\gamma$. 	
	(The minus sign arises since the $\Diff_\Z(\wt\CC)$-action on $\DC$ comes from the action on the \emph{source} of 
	$\gamma\colon \wt{\CC}\to \RP(1)$.)
	\item We regard $\mathbf{u}=(u_1\ u_2)^\top$ as a function on $\DC$ (more precisely, its double cover	
	$\wh{\D}(\CC)$) 
	with values in $\R^2\otimes |\Omega|^{-1/2}_{\wt\CC}$. This function is $\SL(2,\R)$-equivariant, using the standard action on $\R^2$. 
	Using the formula \eqref{eq:thetanormalizedlifts} for $\Theta$, the expression for $\iota(X_\D)\Theta$ is  a consequence of the formula 
	\eqref{eq:XS} for $\iota(X_{\R^2})(s_1\d s_2-s_2\d s_1)$. 
	\item Formula \eqref{eq:thetanormalizedlifts} may be written as $\Theta=\on{tr} \big( (\d\mathbf{u})\mathbf{u}^\top J_1\big)$
	The normalized lifts transform according to 
	\[ \kappa^*\mathbf{u}=A\mathbf{u}\] 
	where the matrix valued function 
	$A\colon \DC\to \SL(2,\R)$ is the composition of $q$ with the covering map $\wt{\SL}(2,\R)\to \SL(2,\R)$. 
	Taking the exterior differential of this equation, we obtain 
	\[ \kappa^*\d\mathbf{u}=A\big(\d\mathbf{u}+(q^*\theta^L)\, \mathbf{u}\big).\]
Hence, 
	\begin{align*}
		\kappa^*\Theta &= \on{tr}\left(A\big(\d\mathbf{u}+(q^*\theta^L)\, \mathbf{u}\big) \mathbf{u}^\top A^\top  J_1\right)\\
		&=\on{tr}\left(\big(\d\mathbf{u}+(q^*\theta^L)\, \mathbf{u}\big) \mathbf{u}^\top A^\top  J_1 A\right)\\
		&=	\on{tr}\left(\big(\d\mathbf{u}+(q^*\theta^L)\, \mathbf{u}\big) \mathbf{u}^\top J_1\right)\\
		&=\Theta-2\on{tr}\left(q^*\theta^L\ \Upsilon(\mathbf{u}))\right).
	\end{align*}
	\item 
	% HERE \footnote{\tt This argument is somewhat unsatisfactory. There should be a better proof, not requiring a calculation?}
		We may assume $\CC=S^1=\R/\Z$, and $Lu=u''+\T u$ where $\T=(\phi')^2+\hh\S(\phi)$. Regarding $\T$  as an element of 
	$\Omega^0(\D_+(S^1),|\Omega|^2_\CC)$, as above,  this says $p^*\d L=\d \T$.  
	From the formula for the Schwarzian derivative $\S(\phi)=(\phi')^{-1}\phi'''-\f{3}{2}(\phi')^{-2}(\phi'')^2$, we obtain, through direct but lengthy calculation, 
	\[ 		\d \S(\phi)=-\big(\S(\phi)'\,\Theta+2\S(\phi)\,\Theta'+\Theta'''\big).\]
	On the other hand, using $\Theta'=-(\phi')^{-1}\d\phi'-(\phi')^{-1}\phi''\Theta$,
	\[ \d \big((\phi')^2\big)= -\Big(\big((\phi')^2\big)' \Theta+2 \big((\phi')^2\big)\Theta'\Big).\]
	Hence \[\d\T=\d\big((\phi')^2+\hh\S(\phi)\big)=-\big(\T'\Theta+2\T\Theta'+\hh\Theta''\big)=D_L\Theta.\qedhere\]
\end{enumerate}	
\end{proof}

\begin{remark}\label{rem:vectorfields}
	For $\gamma\in \DC$, the map   
	\begin{equation}\label{eq:varrhogamma}  \varrho_\gamma\colon \g\to |\Omega|^{-1}_{\wt\CC}=\Vect(\wt\CC),\ \ 
	X\mapsto -\iota(X_\D)\Theta|_\gamma
	\ \ \ \end{equation}
	is a Lie algebra action on $\wt\CC$: it is bracket preserving as a consequence of the Maurer-Cartan equation for $\Theta$. 
	From the definition of $\Theta$, one finds 
	that $\varrho_\gamma(X)$ is simply the  pullback of the generating vector field $X_{\RP(1)}$
	under the local diffeomorphism $\gamma\colon \wt{\CC}\to \RP(1)$. 
	%Indeed, the element  	\[V=X_\D|_\gamma\in T_\gamma(\DC)\subset \Gamma(\gamma^* T\RP(1)).\]	is the vector field along $\gamma\colon  \wt{\CC}\to \RP(1)$ defined by the infinitesimal action of $\PSL(2,\R)$ on the target, $\RP(1)$. 	By definition, $(-\Theta)$ takes $V$ to its image under the identification $\gamma^* T\RP(1)=T\wt{\CC}$, which hence is the unique vector field 	on $\wt{\CC}$ that is $\gamma$-related to $X_{\RP(1)}$. 
	Under the $\Z$-action,
	%\[ \kappa^*\big(\varrho_\gamma(X)\big)=\varrho_\gamma(\Ad_{q(\gamma)^{-1}}X).\]
	%
		\[ \kappa^*\big(\varrho_\gamma(X)\big)=-2\on{tr}( X\Ad_{q(\gamma)}\Upsilon(\mathbf{u}))
	=-2\on{tr}\big( (\Ad_{q(\gamma)^{-1}}X)\,\Upsilon(\mathbf{u})\big)=\varrho_\gamma(\Ad_{q(\gamma)^{-1}}X)
	\]
	where we used that $\Upsilon$ is $\SL(2,\R)$-equivariant.
	In particular, the vector field $\varrho_\gamma(X)$ is periodic (and so descends to $\CC$) if and only if 
	$X$ is fixed under the adjoint action of the monodromy.  
	These vector fields are exactly the elements of $\Vect(\CC)_{p(\gamma)}$,  
	the Lie algebra of the stabilizer $\Diff_+(\CC)_{p(\gamma)}$. See \cite{kir:orb} and \cite[Section 1.3]{ovs:pro}. 
\end{remark}

\subsection{The 2-form}
We now have all the ingredients to describe the 2-form on the space of developing maps.  The formula involves an integral of
the density valued 2-form 
\[ (D_L\Theta)\, \Theta\in \Omega^2(\DC,|\Omega|^1_{\wt{\CC}})\]
 over the segment from $x_0$ to $\kappa(x_0)$ for some choice of $x_0\in\wt{\CC}$. The result becomes independent of $x_0$, 
 by adding a boundary term involving the 
 bilinear concomitant $B_{L,x_0}$, see \eqref{eq:bl}.
 \begin{proposition}\label{prop:independence}
 The 2-form $\varpi_{\D}\in \Omega^2(\DC)$, given as 
 \[ \varpi_\D=-\int_{x_0}^{\kappa(x_0)}(D_L\Theta)\, \Theta-B_{L,x_0}(\Theta,\kappa^*\Theta)\]
 does not depend on the choice of $x_0\in\wt\CC$.  
 \end{proposition}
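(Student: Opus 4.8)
The plan is to prove invariance by comparing the expression for the basepoint $x_0$ with the one for an arbitrary second basepoint $x_1\in\wt\CC$ and checking that the difference vanishes. Two ingredients from Proposition~\ref{prop:propertiestheta} drive everything. First, by part~\ref{it:cf} the density-valued $1$-form $D_L\Theta=p^*\d L$ is $\Z$-invariant, so $\kappa^*(D_L\Theta)=D_L\Theta$; moreover, since $L$ descends to $\CC$ the operator $D_L$ commutes with $\kappa^*$, whence $D_L(\kappa^*\Theta)=\kappa^*(D_L\Theta)=D_L\Theta$. Second, I will use the bilinear-concomitant relation: splitting the half-line integral in the defining formula \eqref{eq:bl} as $\int_{x_0}^{\infty}=\int_{x_0}^{x_1}+\int_{x_1}^{\infty}$ yields $\int_{x_0}^{x_1}\big((D_Lv_1)v_2+v_1(D_Lv_2)\big)=B_{L,x_0}(v_1,v_2)-B_{L,x_1}(v_1,v_2)$ (cf.\ \eqref{eq:intbyparts}). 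Throughout I keep in mind that $\Theta$ and $D_L\Theta$ are ordinary $1$-forms on $\DC$, so wedging them is antisymmetric, e.g.\ $\Theta\wedge(D_L\Theta)=-(D_L\Theta)\wedge\Theta$.

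First I would handle the integral term. Writing $I(y)=\int_{y}^{\kappa(y)}(D_L\Theta)\,\Theta$, additivity of the signed integral gives $I(x_1)-I(x_0)=\int_{\kappa(x_0)}^{\kappa(x_1)}(D_L\Theta)\,\Theta-\int_{x_0}^{x_1}(D_L\Theta)\,\Theta$, valid for any ordering of $x_0,x_1$. Applying the orientation-preserving change of variables $\kappa$ to the first integral, and using $\kappa^*(D_L\Theta)=D_L\Theta$, I get $\int_{\kappa(x_0)}^{\kappa(x_1)}(D_L\Theta)\,\Theta=\int_{x_0}^{x_1}(D_L\Theta)\,(\kappa^*\Theta)$, hence $I(x_1)-I(x_0)=\int_{x_0}^{x_1}(D_L\Theta)\,(\kappa^*\Theta-\Theta)=:\mathcal{I}$.

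Next I would handle the boundary term. Since the concomitant relation is $\R$-bilinear in $(v_1,v_2)$, it extends to the form-valued arguments $v_1=\Theta$, $v_2=\kappa^*\Theta$, the coefficient products becoming wedges on $\DC$. Using $D_L(\kappa^*\Theta)=D_L\Theta$ together with the antisymmetry $\Theta\wedge(D_L\Theta)=-(D_L\Theta)\wedge\Theta$, the integrand collapses, $(D_L\Theta)\wedge\kappa^*\Theta+\Theta\wedge(D_L\kappa^*\Theta)=(D_L\Theta)\wedge(\kappa^*\Theta-\Theta)$, so that $B_{L,x_0}(\Theta,\kappa^*\Theta)-B_{L,x_1}(\Theta,\kappa^*\Theta)=\mathcal{I}$. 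Consequently the change in $\varpi_\D$ is $-\big(I(x_1)-I(x_0)\big)-\big(B_{L,x_1}-B_{L,x_0}\big)(\Theta,\kappa^*\Theta)=-\mathcal{I}-(-\mathcal{I})=0$, which is the assertion.

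The step requiring genuine care is the sign bookkeeping in the boundary computation: one must correctly combine the antisymmetry of the wedge of the two $1$-forms $\Theta$ and $D_L\Theta$ with the sign in the concomitant relation, so that the boundary change comes out as exactly $+\mathcal{I}$ and cancels the integral change $-\mathcal{I}$ rather than doubling it. This is precisely the purpose of including the boundary term with the sign $-B_{L,x_0}$. The conceptual point making the two differences $\kappa^*\Theta-\Theta$ line up is the $\Z$-invariance of $D_L\Theta$, equivalently $D_L(\kappa^*\Theta)=D_L\Theta$; no further properties of $\Theta$ are needed, and connectedness of $\wt\CC$ lets one conclude invariance for all basepoints.
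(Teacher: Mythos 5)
Your proof is correct and follows essentially the same route as the paper's: the same decomposition of the difference of integrals via additivity and the substitution $\kappa$, the same use of the $\Z$-invariance $\kappa^*(D_L\Theta)=D_L\Theta$, and the same conversion of $\int_{x_0}^{x_1}(D_L\Theta)\,(\kappa^*\Theta-\Theta)$ into a difference of concomitants — the only (cosmetic) difference being that you feed the pair $(\Theta,\kappa^*\Theta)$ into the concomitant relation and invoke wedge antisymmetry, whereas the paper feeds in $(\Theta,\kappa^*\Theta-\Theta)$, notes $D_L(\kappa^*\Theta-\Theta)=0$, and discards $B_{L,x_i}(\Theta,\Theta)=0$ at the end. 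One point in your favor: the sign convention you derive directly from \eqref{eq:bl}, namely $\int_{x_0}^{x_1}\big((D_Lv_1)v_2+v_1(D_Lv_2)\big)=B_{L,x_0}(v_1,v_2)-B_{L,x_1}(v_1,v_2)$, is the one consistent with \eqref{eq:bdryterm} and is what the paper's proof effectively uses, even though \eqref{eq:intbyparts} as displayed carries the opposite (apparently typographical) sign.
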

 \begin{proof}
 	Let $x_1\neq x_0$ be another choice. Without loss of generality, assume $x_1>x_0$ with respect to the orientation.  We have 
 	\begin{align*} 
 	\int_{x_0}^{\kappa(x_0)} (D_L\Theta)\Theta-  \int_{x_1}^{\kappa(x_1)} (D_L\Theta)\Theta&=
 	\int_{x_0}^{x_1}(D_L\Theta)\Theta-\int_{\kappa(x_0)}^{\kappa(x_1)}(D_L\Theta)\Theta
 	\\&=\int_{x_0}^{x_1}\big((D_L\Theta)\Theta-\kappa^*(D_L\Theta)\kappa^*\Theta\big)
 	\\&=-\int_{x_0}^{x_1} (D_L\Theta)\, (\kappa^*\Theta-\Theta)
 	\end{align*}
 	where we used that $D_L\Theta$ is $\Z$-invariant (Proposition \ref{prop:propertiestheta}). Using the integration by parts formula
 	\eqref{eq:intbyparts} with 
 	\[ \int_{x_0}^{x_1} \Theta\,D_L(\kappa^*\Theta-\Theta)=0\]
 	(since $D_L\kappa^*\Theta=\kappa^*D_L\Theta=D_L\Theta$) the last expression becomes
 \[ -B_{L,x_0}(\Theta,\kappa^*\Theta-\Theta)+B_{L,x_1}(\Theta,\kappa^*\Theta-\Theta).\]	
 Finally, note that $B_{L,x_i}(\Theta,\Theta)=0$ by symmetry. 	 
 \end{proof}	
 	
 Using the explicit expression for the boundary terms, as given in \eqref{eq:bdryterm}, we can write down the formula 
 for the case $\CC=S^1=\R/\Z$, as follows:
\begin{equation}\label{eq:varpis1}
\varpi_{\D_+(\S^1)}=-\int_0^1 (D_L\Theta)\, \Theta-\Big(
2\T_0\Theta_0\Theta_1+\hh \Theta_0\Theta_1''-\hh \Theta_0'\Theta_1'+\hh \Theta_0''\Theta_1
\Big).
\end{equation}
Here $\T$ is the Hill potential corresponding to $L=p(\gamma)$, and  
$\T_x,\Theta_x,\Theta_x',\Theta_x''$ are the evaluations at $x\in \R$. 
Writing everything in terms of 
$\T=(\phi')^2+\f{1}{2}\phi'''/\phi'-\f{3}{4}(\phi''/\phi')^2$ and $\Theta=-\d\phi/\phi'$, this becomes a highly complicated expression in terms of $\phi$ and its first three derivatives. 
%\eqref{eq:T} 

The properties of $\varpi_\D$, as described in Theorem \ref{th:propertiesofvarpi} may now be proved directly, using the properties of the 1-form $\Theta$. On the other hand, they will also be immediate consequences of the construction by Drinfeld-Sokolov reduction, described in the next Section.

\section{Construction via Drinfeld-Sokolov method}\label{sec:ds}
We shall now give a construction of the 2-form $\varpi_\D$, using the Drinfeld-Sokolov approach \cite{dri:kdv}. Throughout, we take $\CC$ to be a compact, connected, oriented 1-manifold, presented as $\CC=\wt\CC/\Z$ for a universal cover $\wt{\CC}$.
The quotient map will be denoted $\pi\colon \wt{\CC}\to \CC$. 

\subsection{Principal bundles over $\CC$} \label{subsec:prin}
Throughout this subsection, 
 $G$ will denote an arbitrary connected Lie group; later we will specialize to $G=\PSL(2,\R)$. 
Let $Q\to \CC$ be a principal $G$-bundle, and let $\on{Gau}(Q)$ be its group of gauge transformations, with Lie algebra 
\[ \gau(Q)=\Omega^0(\CC,Q\times_G\g)\] the space 
of sections of the adjoint bundle. Let $\A(Q)$ be the affine space of  principal connections on $Q$; the underlying linear space is $\Omega^1(\CC,Q\times_G\g)$. 
The group $\Gau(Q)$ acts on $\A(Q)$ by gauge transformations of connections, with generating vector fields $A\mapsto -\partial_A\xi$ where 
\[ \partial_A\colon \Omega^0(\CC,Q\times_G\g)\to \Omega^1(\CC,Q\times_G\g)\] 
is the covariant derivative.

 The action groupoid for the gauge action is Morita equivalent to the transformation groupoid for the conjugation action of $G$ on itself
\begin{equation}\label{eq:corrLG3}
%\xymatrix{ \Gau(Q)\ltimes \A(Q) \ar[d]<2pt>\ar@<-2pt>[d]& \P(Q)\ar[dl]^p\ar[dr]_q & G\ltimes G \ar[d]<2pt>\ar@<-2pt>[d]\\\A(Q) & & G}
\begin{tikzcd}
[column sep={7em,between origins},
row sep={4.5em,between origins},]
\Gau(Q)\ltimes \A(Q) \arrow[d,shift left]\arrow[d,shift right]
& \P(Q) \arrow[dl, "p"] \arrow [dr, "q"']   & G\ltimes G \arrow[d,shift left]\arrow[d,shift right] \\
\A(Q) & & G
\end{tikzcd}
\end{equation}
Here the equivalence bimodule is the space 
\[ \P(Q)=\{\tau\in \Gamma(\pi^*Q)|\ \exists h\in G\colon 
\kappa^*\tau=h\cdot \tau\}\]
of quasi-periodic sections of the pullback bundle $\pi^*Q\to \wt{\CC}$. Furthermore, $q(\tau)=h$ is the \emph{monodromy}, and $p(\tau)$ is the unique principal connection such that $\tau$ is horizontal (for the pullback connection). 
Since we are dealing with action groupoids, the Morita equivalence just means that the two actions commute, and that they are both principal actions, with $p$ and $q$ as their respective quotient maps. In particular, \eqref{eq:corrLG3}
gives a 1-1 correspondence between $\Gau(Q)$-orbits in $\A(Q)$ and conjugacy classes in $G$. 

\begin{remark}
	\begin{enumerate}
		\item 	A choice of parametrization $\CC\cong S^1$  and trivialization $Q\cong S^1\times G$	identifies $\Gau(Q)=LG$, with the affine action on 
		$\A(Q)=\Omega^1(S^1,\g)$. 		
		\item If $G$ is not simply connected, then the group $\Gau(Q)\cong LG$ will be disconnected, and one may prefer to work with its identity component $\Gau_0(Q)\cong L\wt{G}$ (where $\wt{G}$ is the universal cover of $G$). The quotient map under the action of $\Gau_0(Q)$ defines a lifted monodromy map 
		$\P(Q)\to \wt{G}$, and gives a Morita equivalence 
	\begin{equation}\label{eq:corrLG4}	\begin{tikzcd}
		[column sep={7em,between origins},
		row sep={4.5em,between origins},]
		\Gau_0(Q)\ltimes \A(Q) \arrow[d,shift left]\arrow[d,shift right]
		& \P(Q) \arrow[dl, "p"] \arrow [dr, "q"']   & G\ltimes \wt{G} \arrow[d,shift left]\arrow[d,shift right] \\
		\A(Q) & & \wt{G}
		\end{tikzcd}
\end{equation}

\item One may also allow for disconnected $G$. Here $Q$ is possibly non-trivial, and the image of the map $q$ is a union of components of $G$, 
depending on the topological type of $Q$.  
%The discussion also works for disconnected Lie groups $G$; in that case, the isomorphism type of the bundle $Q$ is classified by a choice of component of $G$, and the range of the map $q$ is given by that component.   
This generalization is relevant for the setting of twisted loop groups.   		\end{enumerate}
\end{remark}

The gauge action of $\Gau(Q)$ on the space of connections extends to the larger group $\Aut(Q)$ of all principal bundle automorphisms. Let $\Aut_+(Q)$ be the subgroup of automorphisms 
such that the induced action on the base $\CC$ preserves orientation. It fits into an exact sequence 
\begin{equation}\label{eq:gaugesequence} 1\lra  \on{Gau}(Q)\lra \Aut_+(Q)\lra \Diff_+(\CC)\to 1,\end{equation}
with a lift
%The $\Z$-action on $\wt\CC$ lifts to an action on the pullback bundle $\pi^*Q\to \wt{\CC}$, and we obtain a lifted sequence 
\begin{equation}\label{eq:gaugesequence1} 1\lra  \on{Gau}(Q)\lra  \Aut_\Z(\pi^*Q) \lra \Diff_\Z(\wt\CC)\to 1\end{equation}
where $\Aut_\Z(\pi^*Q)$ are the $\Z$-equivariant automorphisms of $\pi^*Q$. Note that the action of this group 
on $\P(Q)$ commutes with the action of $G$.  

We shall need a certain `tautological' 1-form  
\begin{equation}\label{eq:xi} \Xi\in \Omega^1(\P(Q), |\Omega|^{0}_{\wt\CC} \otimes \g).
\end{equation}
It may be described in terms of its values $\Xi_x\in \Omega^1(\P(Q),\g)$. Let
\[ \on{ev}_x\colon \P(Q)\to \pi^*Q_x,\ \tau\mapsto \tau(x).\] 
Since $Q_{[x]}$ is a principal $G$-bundle over a point, it has a unique 
connection 1-form $\theta_{[x]}\in \Omega^1(Q_{[x]},\g)$. Put $\Xi_x=\on{ev}_x^*\theta_{[x]}$. 
%
%\begin{remark} Another interpretation: For all $\tau\in \P(Q)$, the tangent space $T_\tau \P(Q)$ is a section of the vertical bundle of $\pi^*Q$ along $\tau$, with a certain quasi-periodicity property (we will not need the explicit formula).   Using the standard trivialization $\pi^*Q\times\g$ of the vertical bundle, this defines a map  $T_\gamma \P(Q)\to C^\infty(\wt{\CC},\g)$ which we may regard as a 1-form.\end{remark}
%The properties of $\Xi$ are similar to those of $\Theta$ (cf. Proposition \ref{prop:propertiestheta}):

\begin{proposition}[Properties of $\Xi$]\label{prop:propertiesxi}\phantom{.}
	\begin{enumerate}
		\item For $g\in G$ and $\FF_Q\in \Aut_\Z(\pi^*Q)$, with base map $\FF\in \Diff_\Z(\wt{\CC})$
		\[ g^*\Xi=\Ad_g\Xi,\ \ (\FF_Q)^*\Xi=\FF\cdot \Xi.\]
		\item $\Xi$ satisfies the Maurer-Cartan equation, $\d\Xi+\hh [\Xi,\Xi]=0$.
		\item For $\xi\in \gau(Q)$, $X\in \g$, $\tau\in \P(Q)$,
		\[ \iota(\xi_{\P})\Xi|_\tau=-\tau^*\wt{\xi},\ \ \iota_{X_{\P}}\Xi=X\]
		(where $\xi\in \Gamma(Q\times_G \g)$ is identified with an equivariant map $Q\to \g$, and $\wt{\xi}$ is its pullback to a map on $\pi^*Q$)
		\item Under the $\Z$-action on $|\Omega|^{0}_{\wt\CC}$
			\begin{equation}\label{eq:xik} \kappa^*\Xi=\Ad_{q}\Xi-q^*\theta^R.\end{equation}
	\end{enumerate}
\end{proposition}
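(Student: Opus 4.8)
The plan is to treat all four statements as consequences of the defining relation $\Xi_x=\on{ev}_x^*\theta_{[x]}$ together with the standard calculus of connection forms on a $G$-torsor. It is convenient to note first that, since $\wt\CC$ is contractible, one may trivialize $\pi^*Q\cong\wt\CC\times G$; a section then becomes a map $s\colon\wt\CC\to G$, the evaluation $\on{ev}_x$ becomes $s\mapsto s(x)$, and the canonical connection $\theta_{[x]}$ on the torsor $Q_{[x]}\cong G$ becomes the (left) Maurer--Cartan form. Thus, for each fixed $x$, the $\g$-valued $1$-form $\Xi_x$ on $\P(Q)$ is simply $s(x)^{-1}\d s(x)$, the pullback of $\theta^L$ under $\on{ev}_x$. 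All four properties are then pointwise-in-$x$ identities that I would verify at fixed $x$ and then let $x$ vary, keeping the invariant description available as a cross-check so as not to be hostage to the choice of trivialization.

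For part (b) I would argue that since $\theta_{[x]}$ is the Maurer--Cartan form on the torsor $Q_{[x]}$, it satisfies $\d\theta_{[x]}+\hh[\theta_{[x]},\theta_{[x]}]=0$; pulling back by $\on{ev}_x$ preserves this equation because $\d$ and the bracket commute with pullback, and as the identity holds for every $x$ it holds for $\Xi$ as a $|\Omega|^0_{\wt\CC}\otimes\g$-valued form. Parts (a) and the $\g$-contraction in (c) are the images under $\on{ev}_x^*$ of the two defining properties of a connection form on a principal homogeneous space: the equivariance $R_g^*\theta_{[x]}=\Ad_{g^{-1}}\theta_{[x]}$ and the normalization $\theta_{[x]}(X_{Q_{[x]}})=X$ on fundamental vector fields. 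Here I would use that $\on{ev}_x$ intertwines the $G$-action on $\P(Q)$ with the principal action on $Q_{[x]}$, so that $\iota(X_\P)\Xi_x=\theta_{[x]}(X_{Q_{[x]}})=X$ independently of $x$, and likewise $g^*\Xi=\Ad_g\Xi$. For the gauge contraction in (c), the generating vector field of $\xi\in\gau(Q)$ moves $\tau(x)$ along the vertical direction prescribed by $\wt\xi(\tau(x))$, so contracting with $\theta_{[x]}$ returns $-(\tau^*\wt\xi)(x)$; the sign records the direction of the gauge action. The $\Aut_\Z(\pi^*Q)$-equivariance in (a) is immediate from the naturality of the construction under bundle automorphisms.

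The substance is in part (d), which is the analogue for $\Xi$ of Proposition \ref{prop:propertiestheta}(e) for $\Theta$. The $\Z$-action on $|\Omega|^0_{\wt\CC}$ gives $(\kappa^*\Xi)_x=\Xi_{\kappa(x)}$, so I must compare $\on{ev}_{\kappa(x)}$ with $\on{ev}_x$. The quasi-periodicity relation $\kappa^*\tau=q\cdot\tau$, read as an identity of maps $\P(Q)\to Q_{[x]}$ with the monodromy $q\colon\P(Q)\to G$ now varying, expresses $\on{ev}_{\kappa(x)}$ as the $G$-translate of $\on{ev}_x$ by $q$. Differentiating this relation along $\P(Q)$ produces two terms: conjugation of $\Xi_x$ by $q$, giving the $\Ad_q\Xi$ contribution, and the variation of the monodromy itself, giving the $q^*\theta^R$ contribution (this is exactly where $\d q$ enters). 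Assembling them yields $\kappa^*\Xi=\Ad_q\Xi-q^*\theta^R$.

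The main obstacle I anticipate is bookkeeping rather than conceptual: pinning down the left/right conventions consistently --- the direction of the $G$-action on $\P(Q)$, whether the torsor connection is the left or right Maurer--Cartan form, and the precise sense of the monodromy relation $\kappa^*\tau=q\cdot\tau$ --- so that the signs and the placement of $\Ad_q$ versus $\Ad_{q^{-1}}$ and of $\theta^R$ versus $\theta^L$ in (d) come out exactly as stated. A secondary point is that when $Q$ is nontrivial (the disconnected-$G$ case noted above) the global trivialization of $\pi^*Q$ need not be $\Z$-equivariant, so for a convention-independent argument I would carry out (d) invariantly, using only that $\kappa^*(\pi^*Q)=\pi^*Q$ canonically and that $\theta_{[x]}$ is intrinsic to the fiber.
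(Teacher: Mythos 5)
Your proposal is correct and takes essentially the same approach as the paper. The paper likewise treats (a)--(c) as immediate consequences of the definition, and proves (d) by trivializing the single fiber $Q_{\pi(x)}\cong G$ so that $\theta_{[x]}=\theta^L$, observing that $\on{ev}_{\kappa(x)}$ equals $\on{ev}_x$ followed by right multiplication by $q(\tau)^{-1}$, and invoking the Maurer--Cartan form calculus --- precisely your ``differentiate the quasi-periodicity relation'' step, carried out fiberwise/invariantly as you yourself recommend in your final paragraph.
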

\begin{proof}
Properties (a)--(c) are simple consequences of the definition.  Consider the transformation property under 
$\kappa$. 
At any given $x\in \wt{\CC}$, this says that 
\begin{equation}\label{eq:xik1}
 \Xi_{\kappa(x)}=\Ad_{q}\Xi_x-q^*\theta^R.\end{equation}
By definition, elements of $\P(Q)$ satisfy $\tau(\kappa(x))=q(\tau)\cdot \tau(x)$. That is, $\on{ev}_{\kappa(x)}$ is 
given by $\on{ev}_x$ followed by the gauge action of $q(\tau)$ on $Q_{\pi(x)}$. 
Choose a trivialization $Q_{\pi(x)}\cong G$, so that $\theta_x=\theta^L$, and 
the gauge action is multiplication from the right by $q(\tau)^{-1}$.
Equation \eqref{eq:xik1} follows from the property of 
Maurer-Cartan forms. 
\end{proof}

 \subsection{The 2-form $\varpi_{\P}$}\label{subsec:varpip}
 Suppose now that $\g=\on{Lie}(G)$ comes equipped with a non-degenerate invariant symmetric bilinear form $\l\cdot,\cdot\r\colon\g\times\g\to \R$, henceforth referred to as a metric.  
The metric, together with integration, defines a non-degenerate $\Aut_+(Q)$-invariant 
pairing between $\Omega^1(\CC,Q\times_G\g)$ and $\gau(Q)$. We may hence regard $\Omega^1(\CC,Q\times_G\g)$ as the smooth dual of $\gau(Q)$, with the $\Gau(Q)$-action as the coadjoint action. It is the linear action underlying the affine $\Gau(Q)$-action on the affine space $\A(Q)$. 
By the same mechanism as explained at the beginning of section \ref{subsec:vir}, 
this defines a central extension
\begin{equation}\label{eq:central} 0\to \R\to \wh{\gau}(Q)\to \gau(Q)\to 0.\end{equation}
As a vector space, $\wh{\gau}(Q)$ consists of all affine-linear functionals $\wh{\xi}\colon \A(Q)\to \R$ for which the underlying linear functional on $\Omega^1(\CC,Q\times_G\g)$ is given by an element $\xi\in \gau(Q)$; 
the bracket reads as
\[ [\wh{\xi}_1,\wh{\xi}_2](A)=\int_\CC \l \partial_A\xi_1,\, \xi_2\r.\]
From now on, we consider the affine action of $\Gau(Q)$ on $\A(Q)=\wh{\gau}(Q)^*_1$ as the \emph{coadjoint gauge action}. 

The following result is due to \cite{al:ati} for the case $Q=S^1\times G$; 
see also \cite{al:mom,loi:spi}. 
\begin{theorem}\label{th:varpiforloopgroups}
There is a natural 2-form 
\[ \varpi_{\P}\in \Omega^2(\P(Q))\]  with the following properties: 
\begin{enumerate}
	\item The exterior differential of $\varpi_{\P}$ is 
	\[ \d\varpi_{\P}=q^*\eta,\]
	where $\eta=-\f{1}{12}\l \theta^L,\, [\theta^L,\theta^L]\r$ is the Cartan 3-form on $G$. 
	\item $\varpi_{\P}$  is $G$-invariant, and its contractions with generating vector fields for the $G$-action are 
	\[ \iota_{X_{\P}} \varpi_{\P}=-\hh q^*\l \theta^L+\theta^R,\, X\r.\]
\item $\varpi_{\P}$  is $\Gau(Q)$-invariant, and its contractions with generating vector fields, for $\xi\in \mf{gau}(Q)=\Gamma(Q\times_G\g)$, are 
\[ \iota(\xi_{\P}) \varpi_{\P}=-p^*\l d A,\xi\r .\]
In fact, $\varpi_{\P}$ is $\Aut_+(Q)$-invariant. 
\end{enumerate}
For a given choice of base point ${x}_0\in \wt{\CC}$, we have 
\begin{equation}\label{eq:varpiformula}\varpi_{\P}=-\hh \int_{{x}_0}^{\kappa({x}_0)} \l \Xi,\,  \Xi'\r+\hh\, \l\Xi_{{x}_0},\,q^*\theta^L\r. \end{equation}

\end{theorem}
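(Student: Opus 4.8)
The plan is to take the explicit formula \eqref{eq:varpiformula} as the \emph{definition} of $\varpi_{\P}$ and to verify all assertions directly from the four properties of the tautological $1$-form $\Xi$ recorded in Proposition \ref{prop:propertiesxi}; the argument runs in close parallel to the Virasoro computations of Propositions \ref{prop:independence} and \ref{prop:propertiestheta}. The first point is that the right-hand side of \eqref{eq:varpiformula} is independent of the base point $x_0$, so that $\varpi_{\P}$ is genuinely \emph{natural}. Following Proposition \ref{prop:independence}, I would replace $x_0$ by some $x_1>x_0$ and write the difference of the integral terms as $\int_{x_0}^{x_1}\big(\l\Xi,\Xi'\r-\kappa^*\l\Xi,\Xi'\r\big)$. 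Since the monodromy $q$ is constant along $\wt\CC$ one has $(\kappa^*\Xi)'=\Ad_q\Xi'$, so Proposition \ref{prop:propertiesxi}(d) together with $\Ad$-invariance of the metric collapses the integrand to $\l q^*\theta^R,\Ad_q\Xi'\r$; carrying out the $x$-integration and using $q^*\theta^R=\Ad_q\,q^*\theta^L$ turns this into a pure boundary expression that cancels exactly against the change in the correction term $\hh\l\Xi_{x_0},q^*\theta^L\r$, where one uses that $\l\alpha,\beta\r=-\l\beta,\alpha\r$ for $\g$-valued $1$-forms.

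With naturality established, I would turn to the three listed properties, each of which reduces to the properties of $\Xi$. For (b), contraction with the generating field $X_\P$ is immediate: since $\iota_{X_\P}\Xi=X$ is constant in $x$ we get $\iota_{X_\P}\Xi'=0$, so $\iota_{X_\P}\l\Xi,\Xi'\r=\l X,\Xi'\r$ and the bulk integral collapses to the boundary value $\l X,\Xi_{\kappa(x_0)}-\Xi_{x_0}\r$. Substituting $\Xi_{\kappa(x_0)}=\Ad_q\Xi_{x_0}-q^*\theta^R$ and adding the contraction of the correction term (which brings in $\iota_{X_\P}q^*\theta^L=q^*\iota_{X_G}\theta^L$ for the conjugation action of $G$ on itself, $q$ being equivariant), the $\Xi_{x_0}$-dependent pieces cancel — this is precisely the purpose of the correction term — and one is left with $-\hh q^*\l\theta^L+\theta^R,X\r$. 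For (c), I would first establish the loop-group analogue of Proposition \ref{prop:propertiestheta}(\ref{it:cf}), namely the covariant-derivative identity $\partial_A\Xi=p^*\d A$ with $A=p(\tau)$, directly from the definitions of $\Xi$ and $p$; combined with $\iota(\xi_\P)\Xi|_\tau=-\tau^*\wt\xi$ and an integration by parts of the form \eqref{eq:intbyparts}, the bulk integral becomes $-p^*\l\d A,\xi\r$ while the boundary terms cancel against the contracted correction term. The invariance statements under $G$, $\Gau(Q)$ and all of $\Aut_+(Q)$ then follow immediately from the equivariance $g^*\Xi=\Ad_g\Xi$, $(\FF_Q)^*\Xi=\FF\cdot\Xi$ of Proposition \ref{prop:propertiesxi}(a) together with $\Ad$-invariance of the metric.

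The genuinely computational step, and the one I expect to be the main obstacle, is the exterior differential (a). Here I would differentiate \eqref{eq:varpiformula} and substitute the Maurer-Cartan equation $\d\Xi=-\hh[\Xi,\Xi]$ of Proposition \ref{prop:propertiesxi}(b); using $\Ad$-invariance of the metric, the integrand $\d\l\Xi,\Xi'\r$ becomes the total $x$-derivative of a multiple of $\l\Xi,[\Xi,\Xi]\r$. Integrating over the segment from $x_0$ to $\kappa(x_0)$ converts this to a difference of boundary values, into which one inserts $\Xi_{\kappa(x_0)}=\Ad_q\Xi_{x_0}-q^*\theta^R$: the terms in $\Ad_q\Xi_{x_0}$ alone cancel the value at $x_0$ by $\Ad$-invariance, the purely $q^*\theta^R$ (equivalently $q^*\theta^L$) terms assemble into the Cartan $3$-form $q^*\eta=-\tfrac1{12}q^*\l\theta^L,[\theta^L,\theta^L]\r$, and the remaining mixed cross terms must cancel against $\d$ of the correction term $\hh\l\Xi_{x_0},q^*\theta^L\r$ (using the Maurer-Cartan equation for $\theta^L$ on $G$). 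Keeping the bookkeeping of these boundary and cross terms straight, and in particular pinning down the normalization $-\tfrac1{12}$ and all signs, is where the real work lies; structurally this is a transgression identity of exactly the type appearing in the q-Hamiltonian formalism of \cite{al:mom,al:ati}. As an alternative that avoids the direct computation, one could fix a parametrization $\CC\cong S^1$ and a trivialization $Q\cong S^1\times G$, identify $\varpi_{\P}$ with the $2$-form constructed in \cite{al:ati}, and check that \eqref{eq:varpiformula} is its coordinate-free repackaging; naturality would then propagate the properties back to an arbitrary bundle $Q$.
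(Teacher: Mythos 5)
Your proposal is correct in outline, but its primary route is genuinely different from the paper's. Your basepoint-independence argument coincides with the paper's (the same substitution $\kappa^*\Xi=\Ad_q\Xi-q^*\theta^R$, $(\kappa^*\Xi)'=\Ad_q\Xi'$, collapsing the difference of integrals to $\hh\l q^*\theta^L,\Xi\r\big|_{x_0}^{x_1}$, which cancels the change in the correction term). For the substantive properties (a)--(c), however, the paper does no coordinate-free computation at all: it uses basepoint-independence to conclude that $\varpi_\P$ is natural under $\Z$-equivariant morphisms of bundles (this, rather than Proposition \ref{prop:propertiesxi}(a), is also how it obtains $\Aut_+(Q)$-invariance), thereby reduces to the trivialized case $Q=S^1\times G$, identifies $\Xi_x=(\gamma_x^{-1})^*\theta^L=-\gamma_x^*\theta^R$ in the path model $\gamma(x+1)=q(\gamma)\gamma(x)$, and recognizes \eqref{eq:varpiformula} as the 2-form of \cite{al:ati} up to a sign convention, citing \cite{al:ati} and \cite[Appendix C]{loi:spi} for the exterior differential and the two contraction identities --- exactly the route you mention only as a closing ``alternative''. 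Your main route (verifying (a)--(c) directly from Proposition \ref{prop:propertiesxi}) amounts to redoing the computations of \cite{al:ati} coordinate-freely; this buys self-containedness, and your outlines of (b) and (c) have the right structure, but the step you yourself flag as ``where the real work lies'' --- the transgression computation giving $\d\varpi_\P=q^*\eta$ with its $\f{1}{12}$ normalization --- is precisely the content that the paper's citation absorbs, so your proposal is a complete proof only modulo executing that computation. One concrete sign caution for (c): compare your proposed identity $\partial_A\Xi=p^*\d A$ with the proof of Theorem \ref{th:pullback}, where in the trivialization one finds $\partial_A\check{\Xi}=-p^*\d A$ for the conjugated form $\check{\Xi}=\Ad_\tau\Xi$; together with $\iota(\xi_\P)\Xi=-\tau^*\wt{\xi}$ the minus signs can still combine to yield $\iota(\xi_\P)\varpi_\P=-p^*\l\d A,\xi\r$, but this bookkeeping (like the left/right action conventions entering (b)) has to be pinned down rather than assumed.
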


Here naturality means that $\varpi_{\P}$ does not depend on any additional choices. In particular; it 
does not depend on the choice of $x_0$.

\begin{proof}
The $G$-invariance is clear from \eqref{eq:varpiformula} and the invariance of the metric on $\g$. The independence of 
\eqref{eq:varpiformula} of the choice of $x_0$ is verified by an argument parallel to that for $\Theta$: Replacing $x_0$ with $x_1$ changes the first term by 
\begin{align*} \hh \int_{x_0}^{x_1}\l \Xi,\Xi'\r-\hh \int_{\kappa(x_0)}^{\kappa(x_1)}\l \Xi,\Xi'\r
&=\hh \int_{x_0}^{x_1}\l \Xi,\Xi'\r-\hh \int_{x_0}^{x_1}\l \Ad_q\Xi-q^*\theta^R,\Ad_q\Xi'\r\\
&=\hh\l q^*\theta^L,\Xi\r\Big|_{x_0}^{x_1},
\end{align*}
which exactly compensates the change of the boundary term. This also shows that $\varpi_{\D}$ is natural: Given principal $G$-bundles $Q_i\to \CC_i=\wt{\CC}_i/\Z$ for $i=1,2$, and a $\Z$-equivariant principal bundle morphism $\pi_1^*Q_1\to \pi_2^*Q_2$ between their lifts, the 2-forms $\varpi_{\P(Q_1)}$ and $\varpi_{\P(Q_2)}$ are related by pullback.  In particular, $\varpi_{\P}$ is $\Aut_+(Q)$-invariant. 
The remaining claims are proved in \cite{al:ati} (see also \cite[Appendix C]{loi:spi}) for the case $Q=S^1\times G$, hence we just 
need to make the relevant identifications for this case. We have  $\Gau(Q)=C^\infty(S^1,G)=LG$, while $\P(Q)$ can be identified with paths $\gamma\colon \R\to G$ satisfying $\gamma(x+1)=q(\gamma)\gamma(x)$, by the map taking $\gamma$ to the section $\tau(x)=(x,\gamma(x)^{-1})$ of $\pi^*Q=\R\times G$. Writing $\gamma_x\colon \P(Q)\to G$ for the map taking $\tau$ to  $\gamma(x)$ we obtain 
\[ \Xi_x=(\gamma_x^{-1})^*\theta^L=-\gamma_x^*\theta^R.\]
Using $q(\gamma)^*\theta^L=\Ad_{\gamma_0}\gamma_1^*\theta^L-\gamma_0^*\theta^R$, and putting $x_0=0$, we have  
\[ \varpi_{\P}=-\hh\int_0^1 \l \gamma_x^*\theta^R,\,\f{\p}{\p x} \gamma_x^*\theta^R\r\,\delta x
-\hh \l \gamma_0^*\theta^L,\,\gamma_1^*\theta^L\r\]
which is the expression used in \cite{al:ati} (except for a different sign convention).
\end{proof}

%The formula for differentials may be verified through a calculation, using the Maurer-Cartan equation and integration by parts:\begin{align*}\d\varpi_{P(G)}&=\f{-1}{4} \int_{x_0}^{\kappa(x_0)} \l [\Xi,\Xi],\Xi'\r+\f{1}{4}\int_{x_0}^{\kappa(x_0)} \l \Xi,[\Xi,\Xi]'\r-\hh \d\l\Xi_{{x}_0},\, q^*\theta^L\r\\&=\f{1}{12} \int_{x_0}^{\kappa(x_0)} \l  [\Xi,\Xi],\, \Xi\r'\ -\hh \d\l\Xi_{{x}_0},\, q^*\theta^L\r.\\&=\f{1}{12} \l  [\Xi,\Xi],\, \Xi\r\ \Big|_{{x}_0}^{\kappa(x_0)}-\hh \d\l\Xi_{{x}_0},\, q^*\theta^L\r.\[ \d \Big(\hh \int_{{x}_0}^{\kappa(x_0)} \l\Xi,\,\Xi'\r\Big)=\f{1}{12} \l  [\Xi,\Xi],\, \Xi\r\ \Big|_{{x}_0}^{\kappa(x_0)}.\end{align*}But  $\Xi_{\kappa(x_0)}=\Ad_q(\Xi_{{x}_0}-q^*\theta^L)$ by \eqref{eq:xik}. Using the $\Ad$-invariance of the metric, we obtain \[ \f{1}{12} \l  [\Xi,\Xi],\, \Xi\r\ \Big|_{{x}_0}^{\kappa(x_0)}=\Big(\f{1}{12} \big\l [\Xi-q^*\theta^L,\Xi-q^*\theta^L],\, \Xi-q^*\theta^L\big\r-\f{1}{12}\l  [\Xi,\Xi],\, \Xi\r\Big)\Big|_{{x}_0}.\]Using the structure equations for $\Xi$ and for $\theta^L$, this may be written as $\f{1}{2}\d \l \Xi,\, q^*\theta^L\r\Big|_{{x}_0}-q^*\eta$, proving $\d\varpi_{P(Q)}=-q^*\eta$.
%Contractions with $X_{\P}$:\begin{align*}\iota(X_{\P})\varpi_{P(G)}&=\f{1}{2}\int_{x_0}^{\kappa(x_0)} \l X,\Xi'\r-\hh \l X,q^*\theta^L\r+\hh \l \Xi_{x_0},X-\Ad_{q^{-1}}X\r\\&=\hh \l X,\kappa^*\Xi-\Xi\r\big|_{x_0}-\hh \l X,q^*\theta^L\r+\hh \l \Xi_{x_0},X-\Ad_{q^{-1}}X\r\\&=\hh \l X,\Ad_q \Xi_{x_0}-q^*\theta^R-\Xi_{x_0}\r-\hh \l X,q^*\theta^L\r+\hh \l \Xi_{x_0},X-\Ad_{q^{-1}}X\r\\&=-\hh \l X,q^*(\theta^L+\theta^R)\r.\end{align*}

\begin{corollary}
	The space $\P(Q)$, equipped with the 2-form $\varpi_{\P}$, defines a Morita equivalence between the symplectic groupoid 
$\Gau(Q)\ltimes \wh{L\g}^*_1\rra 	\wh{L\g}^*_1$ and the quasi-symplectic groupoid $G\ltimes G\rra G$ (integrating the Cartan-Dirac structure). 
\end{corollary}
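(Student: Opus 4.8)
The plan is to verify that $(\P(Q),\varpi_\P)$ realizes the axioms of a quasi-symplectic Morita equivalence as recalled in Appendix~\ref{app:a}, running in parallel to the proofs of Corollary~\ref{cor:symplectic} and Theorem~\ref{th:morita} in the Virasoro case. The underlying equivalence of Lie groupoids is already supplied by diagram~\eqref{eq:corrLG3}: $\P(Q)$ is a principal bibundle, with $p$ and $q$ the quotient maps for the commuting, free and proper actions of $G$ and $\Gau(Q)$. In contrast with the Virasoro setting, the $\Gau(Q)$-action on $\P(Q)$ is free (a gauge transformation fixing a section $\tau$ must fix it fibrewise in the principal bundle, hence is trivial), so the left-hand groupoid is honestly the action groupoid $\Gau(Q)\ltimes\A(Q)$ and no passage to a quotient groupoid is required.

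Next I would pin down the two groupoid 2-forms. The left-hand groupoid is the action groupoid for the coadjoint gauge action of the central extension~\eqref{eq:central}, and by Example~\ref{ex:centralext} carries the canonical multiplicative symplectic form $\omega_{can}$ integrating the Lie--Poisson structure on $\wh{L\g}^*_1=\A(Q)$, with Lie-algebroid map $\xi\mapsto\l dA,\xi\r$. The groupoid $G\ltimes G$ carries the Cartan--Dirac 2-form $\omega_2$ of Example~\ref{ex:gvalued}, with background the Cartan 3-form $\eta$. With these identifications, the three properties of $\varpi_\P$ in Theorem~\ref{th:varpiforloopgroups} match the defining conditions one-to-one: part~(a), $\d\varpi_\P=q^*\eta$, is the prescribed exterior differential relative to the background 3-forms (zero on the symplectic side, $\eta$ on the Cartan--Dirac side); part~(b) is exactly the quasi-symplectic moment-map condition making the $G$-action Hamiltonian for $(G\ltimes G,\omega_2)$; and part~(c) is the moment-map condition for the $\Gau(Q)$-action relative to $(\Gau(Q)\ltimes\A(Q),\omega_{can})$.

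The multiplicativity compatibility I would verify by descent, exactly as in Corollary~\ref{cor:symplectic}. Using the fibre product over the monodromy map $q$, one has $\Gau(Q)\ltimes\A(Q)\cong(\P(Q)\times_G\P(Q))/G$ (here freeness of the $\Gau(Q)$-action is what eliminates the kernel $\ca{K}$ present in the Virasoro case); the 2-form $\pr_1^*\varpi_\P-\pr_2^*\varpi_\P$ is closed, is $G$-basic for the diagonal action by part~(b), and descends to a closed multiplicative form whose Lie-algebroid map is $\xi\mapsto\l dA,\xi\r$ by part~(c), hence coincides with $\omega_{can}$. The symmetric computation on the fibre product over $p$ recovers the Cartan--Dirac form $\omega_2$ on $G\ltimes G$. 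Finally, the minimal-degeneracy condition $\ker\varpi_\P\cap\ker Tp\cap\ker Tq=0$ is a short calculation: a vector in $\ker Tp$ equals $X_\P$ for a unique $X\in\g$; lying in $\ker\varpi_\P$ forces $(1+\Ad_{h^{-1}})X=0$ through part~(b) and nondegeneracy of the metric, while lying in $\ker Tq$ forces $(1-\Ad_{h^{-1}})X=0$, whence $X=0$.

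I expect the genuine work to lie in the multiplicativity/descent step and in making sense of nondegeneracy in the weak (Fr\'echet) sense appropriate to the infinite-dimensional groupoid $\Gau(Q)\ltimes\A(Q)$: unlike the finite-dimensional counting available on the $G$-side, one must argue directly that the descended form is weakly nondegenerate, which is precisely the content of its identification with $\omega_{can}$ above. A secondary, purely bookkeeping point is reconciling the sign in part~(a), $\d\varpi_\P=+q^*\eta$, with the convention $\d\varpi=p^*\eta_1-q^*\eta_2$ of the appendix, which amounts to fixing which of $p,q$ is designated the left versus the right moment map.
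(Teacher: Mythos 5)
Your proposal is correct, and its core step --- matching properties (a), (b), (c) of Theorem \ref{th:varpiforloopgroups} with the three axioms of Xu's definition, after identifying $\omega_1=\omega_{can}$ via Example \ref{ex:centralext} and $\omega_2$ via Example \ref{ex:gvalued} --- is precisely the argument the paper intends: the corollary is stated there without any written proof, as an immediate consequence of Theorem \ref{th:varpiforloopgroups}, the underlying groupoid Morita equivalence \eqref{eq:corrLG3}, and the appendix's dictionary for action groupoids (bi-invariance together with the two moment map conditions is equivalent to the bi-action compatibility axiom). Two remarks on where you go beyond, or around, the paper. First, your descent step is redundant in this setting: unlike Corollary \ref{cor:symplectic}, where $\G_1$ is a genuine quotient groupoid and $\omega_1$ must be \emph{constructed} by descending $\pr_1^*\varpi_\D-\pr_2^*\varpi_\D$, here $\Gau(Q)\ltimes\A(Q)$ is already an action groupoid carrying $\omega_{can}$ by Example \ref{ex:centralext}; what your descent computation actually re-proves is the recovery property \eqref{eq:recovery} (this is the argument of Proposition \ref{prop:groupoidrestriction}), which is a consequence of the Morita equivalence rather than an input to it. Second, your verification of the minimal degeneracy axiom is correct and is a genuine addition: the paper nowhere spells out that $\ker(\varpi_\P)\cap\ker(Tp)\cap\ker(Tq)=0$, and your computation --- a vector in $\ker(Tp)$ is $X_\P$ for a unique $X\in\g$ by freeness of the $G$-action; part (b) plus the fact that $q$ is a submersion and the metric is nondegenerate forces $(1+\Ad_{h^{-1}})X=0$; while $Tq(X_\P)$ is the generating vector field of conjugation at $h=q(\tau)$, whose vanishing forces $(1-\Ad_{h^{-1}})X=0$ --- is the standard q-Hamiltonian kernel argument and closes this gap. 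One small correction: you misquote the appendix convention, which reads $\d\varpi=-\Phi_1^*\eta_1+\Phi_2^*\eta_2$; with $\eta_1=0$ this gives exactly $\d\varpi_\P=q^*\eta$, so no sign reconciliation between part (a) and the appendix is needed (the only sign tension is internal to the paper, between the Cartan 3-form as written in Example \ref{ex:gvalued} and as written in Theorem \ref{th:varpiforloopgroups}).
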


\subsection{The Drinfeld-Sokolov embedding}\label{subsec:incl}\bigskip

We shall now specialize to the structure group $G=\PSL(2,\R)$, with the metric on $\g=\mf{sl}(2,\R)$ given by $\l X,\, Y\r=2\on{tr}(XY)$. 
We shall construct a principal $G$-bundle $Q\to \CC$ and an inclusion $\DC\hra \P(Q)$ under which the 2-form $\varpi_{\D}$ 
is the pullback of the corresponding form $\varpi_{\P}$ on $\P(Q)$. 

The following constructions are inspired by  Segal's paper \cite{seg:geo}. 
Since solutions of a given Hill operator are uniquely determined by their 1-jet at any given point, one considers the jet bundle
\begin{equation}\label{eq:E} E=J^1(|\Lambda|_\CC^{-\f{1}{2}}).\end{equation}
This is a rank 2 vector bundle, with a reduction of the structure group to $\SL(2,\R)$, using the trivialization of $\det(E_x)=\wedge^2 E_x$ by 
$j_x(u_1)\wedge \,j_x(u_2)$, for $-\f{1}{2}$-densities $u_1,u_2$ with Wronskian $W(u_1,u_2)=-1$. We take $Q\to \CC$ to be the associated principal $\PSL(2,\R)$-bundle, obtained as the quotient of the $\SL(2,\R)$-frame bundle by the action of the center. 
As before, we denote by $\pi^*Q\to \wt\CC$ its pullback. 

For any developing map $\gamma\in \DC$ we have the normalized lift (Definition \ref{def:normalized}) 
$u_1,u_2$, defined up to an overall sign. At any point $x\in \wt\CC$, this defines an  $\SL(2,\R)$-frame for $E_x$ defined up to an overall sign, i.e, an element of $\pi^*Q_x$. The quasi-periodicity of the normalized lift means that the 
resulting section of $\pi^*Q$ is quasi-periodic. This defines an inclusion  
\begin{equation}\label{eq:iota} \wh{\iota}\colon \DC\to \P(Q).\end{equation}
The map \eqref{eq:iota} is $G$-equivariant, and descends to the 
\emph{Drinfeld-Sokolov embedding}
\begin{equation}\label{eq:descent} \iota\colon \vir_1^*(\CC)=\on{Hill}(\CC)\hra \A(Q).\end{equation} 
\begin{remark}
	The Drinfeld-Sokolov embedding may also be seen as follows. By the existence and uniqueness theorem for second order ODE's, every solution to a Hill operator $L$ is uniquely determined by its 1-jet. Hence, 
	$L$ determines a linear connection $\nabla$ on $E$, with the property that for all local sections $u$ of $|\Lambda|_\CC^{-\f{1}{2}}$, 
	\[ Lu=0\Leftrightarrow \nabla j^1(u)=0.\] 
	In turn, $\nabla$ defines an element of the space $\A(Q)$ of projective connections.  
\end{remark}

The main result of this subsection relates the 2-forms on the space of developing maps and on the space of quasi-periodic sections. 

\begin{theorem}\label{th:pullback} The 2-forms on the space $\DC$ of developing maps and on the space $\P(Q)$ of quasi-periodic sections are related by pullback:
	\[ \varpi_{\D}=\wh{\iota}^*	 \varpi_{\P}.\]
\end{theorem}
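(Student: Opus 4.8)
The plan is to reduce the statement to an explicit computation of the pullback $\wh\iota^*\Xi$ of the tautological form on $\P(Q)$, and then to match the resulting integrand and boundary term against the defining formula for $\varpi_\D$ in Proposition \ref{prop:independence}. First I would make the Drinfeld--Sokolov embedding concrete. For $\gamma\in\DC$ with normalized lift $u_1,u_2$ (so $W(u_1,u_2)=-1$), the section $\wh\iota(\gamma)$ of $\pi^*Q$ is, at each $x\in\wt\CC$, the $\PSL(2,\R)$-frame of $E=J^1(|\Lambda|^{-\f12}_\CC)$ given by the $1$-jets $j^1_x(u_i)$. In a local trivialization this frame is represented (up to sign, and with a suitable ordering of the jet components so that the matrix lies in $\SL(2,\R)$) by the matrix $U$ whose columns are the jets $(u_i,\partial u_i)$. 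The identification of $\P(Q)$ with quasiperiodic $G$-valued maps used in the proof of Theorem \ref{th:varpiforloopgroups} then gives
\[ \wh\iota^*\Xi = U^{-1}\,\d U,\]
the left logarithmic derivative of $U$ along $\DC$, where $\d$ is the de Rham differential on $\DC$ and commutes with $\partial$.

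The crux is the companion-matrix form of a Hill operator. Since $Lu_i=0$ reads $\partial^2 u_i=-\T u_i$, the jets satisfy $\partial U = M\,U$ for the companion matrix $M=M(\T)$, and a short manipulation yields the clean identity
\[ \partial\big(\wh\iota^*\Xi\big)=\Ad_{U^{-1}}(\d M),\qquad \d M\ \text{proportional to}\ \d\T .\]
Thus the $\wt\CC$-derivative of $\wh\iota^*\Xi$ is ``vertical'', proportional to $\d\T=p^*\d L=D_L\Theta$ (Proposition \ref{prop:propertiestheta}(f)). Feeding this into the bulk term $-\hh\int_{x_0}^{\kappa(x_0)}\l\Xi,\Xi'\r$ of \eqref{eq:varpiformula}, the $\Ad$-invariance of the metric collapses the pairing to $\l \d U\,U^{-1},\d M\r$, and a two-line evaluation (using $\Theta=u_1\d u_2-u_2\d u_1$ and $W(u_1,u_2)=-1$) identifies the density-valued integrand with $(D_L\Theta)\,\Theta$, matching the bulk of $\varpi_\D$.

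It then remains to match the boundary contributions, i.e.\ to show that $\hh\l\wh\iota^*\Xi_{x_0},q^*\theta^L\r$ equals $-B_{L,x_0}(\Theta,\kappa^*\Theta)$. Here I would use the $\kappa$-transformation law for $\Xi$ (Proposition \ref{prop:propertiesxi}(d)), which under $\wh\iota$ reproduces the transformation law for $\Theta$ (Proposition \ref{prop:propertiestheta}(e)), together with the explicit expression \eqref{eq:bdryterm} for the bilinear concomitant in terms of $\T$ and the first two $\partial$-derivatives of $\Theta$. Since both $\varpi_\D$ and $\wh\iota^*\varpi_\P$ are known to be independent of $x_0$, it suffices to verify this identity for a single convenient base point, which simplifies the algebra.

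The main obstacle is precisely this boundary-term identification: it requires translating the purely group-theoretic pairing $\l\Xi_{x_0},q^*\theta^L\r$, built from the frame $U$ and its monodromy, into the second-order differential-operator data packaged in $B_{L,x_0}$. Running alongside it is the bookkeeping of sign and normalization conventions---the metric $\l X,Y\r=2\on{tr}(XY)$ on $\mf{sl}(2,\R)$ is indefinite, and the Cartan $3$-form normalizations in Theorems \ref{th:propertiesofvarpi} and \ref{th:varpiforloopgroups} must be reconciled---which I would pin down once at the level of the identity $\partial(\wh\iota^*\Xi)=\Ad_{U^{-1}}(\d M)$ and then carry consistently through both the bulk and the boundary computation. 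Once both match, $\varpi_\D$ and $\wh\iota^*\varpi_\P$ are two $x_0$-independent $2$-forms with identical integrands and identical boundary terms, hence equal.
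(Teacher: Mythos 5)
Your proposal is correct and is essentially the paper's own proof: the paper likewise reduces to coordinates, realizes $\wh\iota(\gamma)$ as the frame matrix $\tau$ of the normalized lift, uses the companion-matrix connection $A=-\tau'\tau^{-1}$ together with $\Ad$-invariance of the pairing to reduce the bulk term to $\hh\l\check\Xi,\partial_A\check\Xi\r$ with $\check\Xi=(\d\tau)\tau^{-1}$, and then matches the boundary pairing $-\hh\l\check\Xi_0,\check\Xi_1\r$ against the concomitant formula \eqref{eq:bdryterm}. Your identity $\partial(\wh\iota^*\Xi)=\Ad_{U^{-1}}(\d M)$ is the same mechanism in slightly slicker packaging, since $\partial_A\check\Xi=\d M$ (both equal the strictly upper-triangular matrix with sole entry $-\d\T=-D_L\Theta$), so the two arguments coincide step for step.
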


In particular, the properties of the 2-form $\varpi_\D$, as listed in Theorem \ref{th:propertiesofvarpi}, follow from the corresponding properties of $\varpi_\P$ (Theorem \ref{th:varpiforloopgroups}). 

\begin{proof}
We shall show separately that the integrands match, i.e. 
\begin{equation}\label{eq:theintegrands} \wh{\iota}^* \Big(\hh\l\Xi,\Xi'\r\Big) =\Theta\,(D_L\Theta),\end{equation}
and the boundary terms (for given choice of $x_0$) match as well. 	For the calculation, we may assume $\CC=S^1=\R/\Z$.  The $r$-densities $|\partial x|^r$ define a  trivialization of the $r$-density bundles $|\Lambda|^r_\CC,\ \ |\Lambda|^r_{\wt\CC}$, which we will use to identify densities with functions. 
 In particular, we obtain a trivialization of $E$, given at $x\in S^1$ by 
 % HERE \footnote{\tt Here we're writing $x$, earlier we used notation  $[x]$. Not sure what's better.}
 %
 \[E_x=J^1_x(|\Lambda|_\CC^{-\f{1}{2}})\stackrel{\cong}{\lra}\R^2,\ \ \   j^1_x(u)\mapsto \Big(\begin{array}{c}u'(x)\\ u(x)\end{array}\Big).\]
Using square brackets to denote the image of an $\SL(2,\R)$-matrix in $\PSL(2,\R)$, the inclusion $\wh{\iota}$ is given by 
\[ \wh{\iota}\colon \gamma\mapsto \tau=\left[\begin{array}{cc}u_1'& u_2'\\ u_1& u_2
\end{array}\right],\]
where $u_1,u_2$ is a normalized lift of $\gamma$. By definition, $u_1'u_2-u_1u_2'=1$ and $u_i''=-\T u_i$. 
Recall that $\Xi$ is pointwise  the pullback of the left-invariant Maurer Cartan form $\theta^L$. Thus, $\wh{\iota}^*\Xi=\tau^{-1}\d\tau$. 
As it turns out, it will be more convenient to work with 
\[ \check{\Xi}\in \Omega^1(\P(Q),\gau(Q)),\] 
given in terms of the parametrization as $\wh{\iota}^*\check{\Xi}=(\d\tau)\tau^{-1}$. 
The integral term in the formula for $\varpi_{P(Q)}$ can be computed as 
\[ \hh \int_{0}^{1} \l\Xi,\,\Xi'\r=\hh \int_0^1 \l \check{\Xi},\, \partial_A \check{\Xi}\r\] with the covariant derivative $\partial_A \xi =\xi'+[A,\xi]$
corresponding to $A=-\tau'\,\tau^{-1}$. A calculation shows 
\begin{equation}\label{eq:Aconnection} A=-
\left(\begin{array}{cc}u_1'& u_2'\\ u_1& u_2
\end{array}\right)'  \left(\begin{array}{cc}u_2& -u_2'\\ -u_1& u_1'
\end{array}\right)=
\left(\begin{array}{cc} 0& \T\\ -1& 0 \end{array}\right).\end{equation}
Similarly, 
\[ \check{\Xi}=\left(\begin{array}{cc}\d u_1'& \d u_2'\\ \d u_1& \d u_2
\end{array}\right)
\left(\begin{array}{cc}u_2& -u_2'\\ -u_1& u_1'
\end{array}\right)=
\left(\begin{array}{cc} 
-u_1\d u_2'
+u_2\d u_1'
&u_1'\d u_2' -u_2'\d u_1'\\-u_1\d u_2+u_2\d u_1 & u_1'\d u_2-u_2' \d u_1
\end{array}\right)
\]
The matrix entries may be expressed in terms
of 
\begin{align*}
\Theta&=u_1\d u_2-u_2\d u_1\\
\Theta'&=2(u_1'\d u_2-u_2'\d u_1)\\
\Theta''%&=2( u_1''\d u_2-u_2''\d u_1)+2(u_1'\d u_2'-u_2'\d u_1')\\
&
=-2\T \Theta +2(u_1'\d u_2'-u_2'\d u_1')
\end{align*}
(we used $0=\d(u_1u_2'-u_2u_1')=u_2'\d u_1-u_1'\d u_2-u_2\d u_1'+u_1\d u_2'$
and $u_i''=-\T u_i$). The result is
 \[\check{\Xi}=
\left(\begin{array}{cc}-\hh \Theta' & \hh \Theta''+\T \Theta \\ -\Theta  & \hh\Theta' \end{array}\right)\]
For the covariant derivative $\partial_A\check{\Xi}=\check{\Xi}'+[A,\check{\Xi}]$, we find
\[ \partial_A\check{\Xi}
%=\left(\begin{array}{cc}-\hh \Theta'' & \hh \Theta'''+\T '\Theta+\T\Theta' \\ -\Theta'  & \hh\Theta'' \end{array}\right)+\left(\begin{array}{cc} \hh \Theta'' &\T\Theta' \\       \Theta'  & -\hh \Theta'' \end{array}\right)
=\left(\begin{array}{cc}0 & \hh \Theta'''+\T '\Theta+2\T\Theta' \\ 0 & 0\end{array}\right)
=\left(\begin{array}{cc}0 & -D_L\Theta\\ 0 & 0\end{array}\right).
\]
We hence obtain 
\[ \hh \l \Xi,\Xi'\r=
\hh \l \check{\Xi},\, \partial_A \check{\Xi}\r
=\on{tr}(\check{\Xi}\partial_A\check{\Xi})=\Theta\,(D_L\Theta). 
\]
as required. Consider next the boundary terms. Again, we find it more convenient to express these in terms of $\check{\Xi}$:
\begin{align*} \hh\, \l\Xi_{{x}_0},\, q^*\theta^L\r&=-\hh \l \check{\Xi}_0,\, \check{\Xi}_1\r\\
%&=-\on{tr}(\check{\Xi}_0\check{\Xi}_1)\\
&=-\on{tr}\ \left(\begin{array}{cc}\hh \Theta_0' & -\Theta_0 \\  \hh \Theta_0''+\T_0\Theta_0& -\hh\Theta_0' \end{array}\right)
\left(\begin{array}{cc}\hh \Theta_1' & -\Theta_1 \\  \hh \Theta_1''+\T_1\Theta_1& -\hh\Theta_1' \end{array}\right)
\\
%&=(\f{1}{4}\Theta_0'\Theta_1'-\hh\Theta_0\Theta_1''-\T_1\Theta_0\Theta_1)+(\f{1}{4}\Theta_0'\Theta_1'-\hh\Theta_0''\Theta_1-\T_0\Theta_0\Theta_1)\\
&=2\T_0\Theta_0\Theta_1+\hh(\Theta_0''\Theta_1-\Theta_0'\Theta_1'+\Theta_0\Theta_1'').
\end{align*}
Here we used $\T_1=\T_0$ by periodicity of the Hill potential. 
\end{proof}
\begin{remark}
	The proof showed in particular that the  linear connection $\nabla=\delta+A$ corresponding to the Hill operator $L=\f{d^2}{d x^2}+\T(x)$ is given by \eqref{eq:Aconnection}. 
\end{remark}

The inclusion $\wh{\iota}$ is  $\Diff_+(\CC)$-equivariant. Indeed, the action of $\Diff_+(\CC)$ on jets determines an action on $E$ by linear bundle automorphisms, preserving orientation, and hence an action on $Q$ by principal bundle  automorphisms. This defines a 
group homomorphism 
\[ \Diff_+(\CC)\hra  \Aut_+(Q)\] which splits the exact sequence \eqref{eq:gaugesequence}. 
It lifts to  a group homomorphism 
\begin{equation}\label{eq:grouphom} 
\Diff_\Z(\wt\CC)\hra  \Aut_\Z(\pi^*Q)
\end{equation}
splitting \eqref{eq:gaugesequence1}. From the coordinate-free construction, it is clear that the embedding \eqref{eq:iota} is  equivariant with respect to \eqref{eq:grouphom}. Let us also give the coordinate expression. 
	\begin{proposition}
	For $\CC=S^1=\R/\Z$, 	and $Q=S^1\times G$, 
	the canonical inclusion
	\[\Diff_\Z(\wt\CC)\hra  \Aut_\Z(\pi^*Q)=C^\infty(S^1,G)\rtimes \Diff_\Z(S^1)\]
	is given by 
	$\FF^{-1}\mapsto ([g],\FF^{-1})$ where 
	\[ g= 
		\left( \begin{array}{cc}\FF'(x)^{\f{1}{2}}&-\hh \FF''(x)\FF'(x)^{-\f{3}{2}}  \\ 0& \FF'(x)^{-\f{1}{2}}
	\end{array}\right).
	\]
	The description of $ \Diff_+(\CC)\hra  \Aut_+(Q)$ is analogous.
\end{proposition}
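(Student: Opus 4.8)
The plan is to unwind the coordinate-free definition of the homomorphism \eqref{eq:grouphom} in the chosen trivialization. Recall that this homomorphism is built from the natural action of $\Diff_+(\CC)$ on the $1$-jet bundle $E=J^1(|\Lambda|^{-\hh}_\CC)$, induced by the push-forward action on $-\hh$-densities, followed by passage to the associated $\PSL(2,\R)$-frame bundle $Q$. First I would fix the coordinate trivialization of $E$ from the proof of Theorem~\ref{th:pullback}, namely $j^1_x(u)\mapsto (u'(x),u(x))^\top=(f'(x),f(x))^\top$ for $u=f\,|\partial x|^{-\hh}$. The canonical frames this produces trivialize $Q\cong S^1\times G$, so that every $\Z$-equivariant automorphism of $\pi^*Q$ is recorded as a pair $([g],\psi)\in C^\infty(S^1,G)\rtimes\Diff_\Z(S^1)$, with $\psi$ the base map and $[g]$ the class of the matrix of the induced linear map on jets, read in the canonical frames.

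The heart of the argument is then a short jet computation. For $\FF\in\Diff_\Z(\R)$, the transformed $-\hh$-density $(\FF^{-1})_*u$ of $u=f\,|\partial x|^{-\hh}$ has coefficient $f(\FF(x))\,\FF'(x)^{-\hh}$, where the weight factor $\FF'(x)^{-\hh}$ reflects the density exponent $-\hh$. Differentiating in $x$ and collecting terms, the $1$-jet at $x$ is obtained from the $1$-jet of $u$ at $\FF(x)$ by left multiplication with the matrix $g$ of the statement; crucially, the off-diagonal entry $-\hh\,\FF''\FF'^{-\f{3}{2}}$ is produced precisely by differentiating the weight factor $\FF'^{-\hh}$, while the diagonal entries $\FF'^{\pm\hh}$ come from the value and its rescaled derivative. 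Since $\det g=1$, its class $[g]\in\PSL(2,\R)=G$ is well defined (this also absorbs the overall sign ambiguity of the normalized lift), and since $\FF\in\Diff_\Z(\R)$ makes $\FF'$ and $\FF''$ periodic, $g$ descends to an element of $C^\infty(S^1,G)$.

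It remains to identify the pair with $([g],\FF^{-1})$ and to fix the direction of the base map. Here I would invoke the $\Diff_\Z(\wt\CC)$-equivariance of the embedding $\wh\iota\colon\DC\to\P(Q)$: writing $\gamma=(\sin\phi:\cos\phi)$ with normalized lift $u_i$, the developing map $\FF^{-1}\cdot\gamma$ has normalized lift $(\FF^{-1})_*u_i$, so comparing $\wh\iota(\FF^{-1}\cdot\gamma)$ with $\wh\iota(\gamma)$ through the above jet computation pins down both the base map as $\FF^{-1}$ and the gauge part as $[g]$, exactly as claimed. The analogous description of $\Diff_+(\CC)\hookrightarrow\Aut_+(Q)$ then follows immediately, since the periodicity observed above lets $g$ descend to $\CC=S^1$.

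The step I expect to be the main obstacle is purely bookkeeping: keeping the various conventions consistent (push-forward versus pullback of densities, the side on which the gauge function $g$ is evaluated in the semidirect product, and the resulting direction $\FF$ versus $\FF^{-1}$ of the base map). The genuine content—the appearance of the $\FF''$ term and the upper-triangular shape of $g$—is forced by the single density exponent $-\hh$, and once the trivialization is fixed the computation is routine.
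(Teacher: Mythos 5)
Your proposal is correct and follows essentially the same route as the paper: the paper's proof is exactly the jet computation you describe, differentiating the coefficient $\FF'(x)^{-\hh}f(\FF(x))$ of the pulled-back $-\hh$-density so that the weight factor produces the off-diagonal entry $-\hh\,\FF''\FF'^{-\f{3}{2}}$ and the chain rule produces the diagonal entries $\FF'^{\pm\hh}$. Your additional checks (unimodularity of $g$, periodicity, and pinning down conventions via equivariance of $\wh{\iota}$) are harmless elaborations of points the paper leaves implicit.
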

\begin{proof}
	The pullback of $u=f(x) |\partial x|^{-\f{1}{2}}$ is 
	\[ \FF^* u=(\FF^{-1})\cdot u=\FF'(x)^{-\f{1}{2}}f(\FF(x))\ |\partial x|^{-\f{1}{2}}.\] 
	Replacing $x$ with $x+t$, and taking the first order 
	Taylor approximation, the coefficient function is replaced by 
	\[ \FF'(x)^{-\f{1}{2}}f(\FF(x))+t\,\Big(\FF'(x)^{\f{1}{2}}f'(\FF(x))-\hh \FF''(x)\FF'(x)^{-\f{3}{2}}f(\FF(x))\Big) +O(t^2)\]
	so that 
		\[\left(\begin{array}{c} f'(x) \\f(x) \end{array}\right)\mapsto 
	\left( \begin{array}{cc}\FF'(x)^{\f{1}{2}}&-\hh \FF''(x)\FF'(x)^{-\f{3}{2}}  \\ 0& \FF'(x)^{-\f{1}{2}}
	\end{array}\right)
	\left(\begin{array}{c} f'(\FF(x)) \\f(\FF(x)) \end{array}\right)
	\]
\end{proof}

\begin{proposition}\label{prop:groupoidrestriction}
	The symplectic groupoid $\G_1\rra \vir^*_1(\CC)$ is the restriction (as a groupoid) 
	of 	$\Gau_0(Q)\ltimes \A(Q)\rra \A(Q)$ to the Drinfeld-Sokolov slice $\vir^*_1(\CC)\subset \A(Q)$.  
\end{proposition}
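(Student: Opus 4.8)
The plan is to present both groupoids as quotients of submersion groupoids and then compare them through the Drinfeld--Sokolov embedding $\wh{\iota}$. First I would apply Example \ref{ex:relevant} to the principal $G$-bundle $p\colon\P(Q)\to\A(Q)$ together with the $G$-equivariant submersion $q\colon\P(Q)\to\wt{\SL}(2,\R)$, exactly as was done for $\DC$ in Section \ref{subsec:actiongroupoidvirasoro}. This presents the Morita-equivalent groupoid over $\A(Q)$ as $(\P(Q)\times_{\wt{\SL}(2,\R)}\P(Q))/G$, and since $\Gau_0(Q)$ acts principally along $q$ with quotient $\wt{\SL}(2,\R)$ (diagram \eqref{eq:corrLG4}), this quotient is canonically the action groupoid:
\[ \Gau_0(Q)\ltimes\A(Q)\;\cong\;\big(\P(Q)\times_{\wt{\SL}(2,\R)}\P(Q)\big)/G, \]
the isomorphism sending a class $[(\tau_1,\tau_2)]$ with $q(\tau_1)=q(\tau_2)$ to the unique $\xi\in\Gau_0(Q)$ with $\xi\cdot\tau_2=\tau_1$, whose source and target are $p(\tau_2)$ and $p(\tau_1)$. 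On the other side, $\G_1=(\DC\times_{\wt{\SL}(2,\R)_+}\DC)/G$ by its definition \eqref{eq:g1}.

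Next I would feed in the properties of $\wh{\iota}$ established in Section \ref{subsec:incl}: it is $G$-equivariant, the monodromy map $q$ on $\P(Q)$ restricts along $\wh{\iota}$ to the monodromy map $q$ on $\DC$ (with image $\wt{\SL}(2,\R)_+$), and $p\circ\wh{\iota}=\iota\circ p$. Consequently $\wh{\iota}\times\wh{\iota}$ carries $\DC\times_{\wt{\SL}(2,\R)_+}\DC$ into $\P(Q)\times_{\wt{\SL}(2,\R)}\P(Q)$ and descends to a groupoid morphism from $\G_1$ into the restriction of $\Gau_0(Q)\ltimes\A(Q)$ to the slice $\iota(\on{Hill}(\CC))\subset\A(Q)$; on objects it is $\iota$, and on arrows it is injective since $\wh{\iota}$ is injective and $G$-equivariant.

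The crux is to identify the image, and this rests on the single lemma
\[ \wh{\iota}(\DC)=p^{-1}\big(\iota(\on{Hill}(\CC))\big). \]
I would prove this using the description of $\iota(L)$ as the linear connection $\nabla$ on $E=J^1(|\Lambda|_\CC^{-\f{1}{2}})$ whose horizontal sections are precisely the $1$-jets $j^1(u)$ of solutions of $Lu=0$ (the Remark following \eqref{eq:descent}). A section $\tau\in\P(Q)$ lies in $p^{-1}(\iota(L))$ iff it is horizontal for $\nabla$, i.e. iff its two frame components are $1$-jets of solutions $u_1,u_2$ of $L$ with $W(u_1,u_2)=-1$; but such a pair is exactly a normalized lift, so $\tau=\wh{\iota}(\gamma)$ with $\gamma=(u_1:u_2)$. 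This gives $\supseteq$, while $\subseteq$ is immediate from the construction of $\wh{\iota}$. I expect this image identification to be the only genuine obstacle; everything else is formal transport along Example \ref{ex:relevant}.

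With the lemma in hand the proof concludes quickly: the restriction consists of the classes $[(\tau_1,\tau_2)]$ with $p(\tau_1),p(\tau_2)\in\iota(\on{Hill}(\CC))$, equivalently $\tau_1,\tau_2\in p^{-1}(\iota(\on{Hill}(\CC)))=\wh{\iota}(\DC)$, so every such class is the image of a class in $\DC\times_{\wt{\SL}(2,\R)_+}\DC$. Hence the morphism is onto the restriction, and being bijective and compatible with source, target and multiplication it is a groupoid isomorphism. Finally I would remark that this identification is compatible with the symplectic data as well: since $\varpi_\D=\wh{\iota}^*\varpi_\P$ (Theorem \ref{th:pullback}), the symplectic form $\omega_1$ on $\G_1$ agrees with the restriction of the canonical form of the gauge groupoid.
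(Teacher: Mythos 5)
Your proof is correct and takes essentially the same route as the paper: both recover $\Gau_0(Q)\ltimes \A(Q)$ from the Morita bimodule $\P(Q)$ as $(\P(Q)\times_{\wt{G}}\P(Q))/G$ (you via Example \ref{ex:relevant} and diagram \eqref{eq:corrLG4}, the paper via the recovery property \eqref{eq:recovery}), and then identify the restriction to the slice with $\G_1=(\DC\times_{\wt{G}_+}\DC)/G$, matching the symplectic forms through Theorem \ref{th:pullback}. The only difference is one of detail: you spell out the key identification $\wh{\iota}(\DC)=p^{-1}\big(\vir^*_1(\CC)\big)$ via horizontality of normalized fundamental frames, a step the paper's proof compresses into the phrase ``but this is exactly $\G_1$''.
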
 

\begin{proof}
	By the general property \eqref{eq:recovery} of Morita equivalences, the groupoid 
	$\Gau_0(Q)\ltimes \A(Q)$ is recovered from the equivalence bimodule $\P(Q)$ as 
	\[ \Gau_0(Q)\ltimes \A(Q)=(\P(Q)\times_{\wt{G}}\P(Q))/G,\]
	with the symplectic form induced by $\pr_1^*\varpi_\P-\pr_2^*\varpi_\P$.  
	The restriction of this groupoid to $\vir^*_1(\CC)$ is given by arrows for which both source and target are in 
	$\vir^*_1(\CC)$. But this is exactly $\G_1=(\D(\CC)\times_{\wt{G}_+}\D(\CC))/G$, and the pullback of the symplectic form 
	is induced by $\pr_1^*\varpi_\D-\pr_2^*\varpi_\D$. 
\end{proof}

To summarize, the Morita equivalence \eqref{eq:corr0c} for the coadjoint Virasoro action is obtained from the Morita equivalence 
\eqref{eq:corrLG4} by restriction. This extends to the Hamiltonian spaces: If $(\M,\omega_\M)$ is a Hamiltonian $\on{Gau}_0(Q)$-space,
 then the pre-image of $\vir^*_1(\CC)\subset \A(Q)$ under the momentum map $\Phi_\M\colon \M\to \A(Q)$ is a Hamiltonian Virasoro space.

\subsection{Drinfeld-Sokolov reduction}\label{subsec:ds}

The result $\varpi_{\D}=\wh{\iota}^*\varpi_{\P}$ has a conceptual explanation. We shall follow 
the coordinate-free description of Drinfeld-Sokolov reduction 
\cite{dri:kdv} given in  Segal's paper \cite{seg:geo}; see also \cite{fre:ver}. 
Let 
\[ \PP\to \CC=\wt\CC/\Z\] 
be an oriented rank $1$ projective bundle, with a distinguished section $\sigma\in \Gamma(\PP)$. 
 Let $Q\to \CC$ be the associated principal $G=\PSL(2,\R)$-bundle. The gauge group 
$\G=\Gau(Q)$ acts by projective transformations on $\PP$. The section $\sigma$ determines subgroups 
\begin{equation}\label{eq:groups} \ca{N}\subset \ca{B}\subset \ca{G}=\Gau(Q),\end{equation}
where $\ca{B}$ are the gauge transformations fixing $\sigma$, and $\ca{N}$ are those which furthermore act trivially on 
$\sigma^*V\PP$, where $V\PP\subset T\PP$ is the vertical bundle. The corresponding Lie algebras 
\begin{equation}\label{eq:algebras} \on{Lie}(\ca{N})\subset \on{Lie}(\ca{B})\subset \on{Lie}(\ca{G})=\gau(Q)\end{equation}
act as infinitesimal gauge transformations on $\PP$, and so are realized as spaces of vertical vector fields on $\PP$.  The metric on $\g=\mf{sl}(2,\R)$  induces a bundle metric on $Q\times_G \g$, and defines a non-degenerate $C^\infty(\CC)$-valued bilinear form on $\mf{gau}(Q)$. Using local trivializations, we see that
%\[ \on{Lie}(\ca{B})^\perp=\on{Lie}(\ca{N}),\] so that  
$\on{Lie}(\ca{G})/\on{Lie}(\ca{B})=\Gamma(\sigma^*V\PP)$ is non-singularly paired with $\on{Lie}(\ca{N})$. Combined with integration over $\CC$, this identifies 
\begin{equation}\label{eq:ds-identify} \Omega^1(\CC,\sigma^* V\PP)=\on{Lie}(\ca{N})^*\end{equation}
(the smooth dual). The space  $\A(Q)$ of principal connections may be regarded as the space 
of projective connections on $\PP$. Given such a connection,  the composition of $T\sigma\colon T\CC\to T\PP$ 
 followed by vertical projection defines a bundle map $T\CC\to \sigma^*V\PP$, which we may also think of as an element of 
 \eqref{eq:ds-identify}. This gives a $\ca{B}$-equivariant map
%\footnote{Actually, it's equivariant for the group of automorphisms preserving $\sigma$, or equivalently $\on{Aut}(Q_B)$ (see below).} 
\begin{equation}\label{eq:psimap} \Psi\colon \A(Q)\to  \on{Lie}(\ca{N})^*.
\end{equation}
By construction, $\Psi(A)=0$ if and only if $\sigma$ is an $A$-horizontal section. 
\begin{proposition}\label{prop:Nmomentum}
	The restriction of the central extension $\wh{\mf{gau}}(Q)$ to $\on{Lie}(\ca{N})\subset \gau(Q)$ is canonically trivial. 
\end{proposition}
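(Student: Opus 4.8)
The plan is to produce an explicit, choice-free Lie algebra splitting of the restricted extension. The two structural inputs are that $\on{Lie}(\ca{N})$ is \emph{abelian} and that $\mf{n}\subset\mf{sl}(2,\R)$ is \emph{isotropic} for the metric $\l X,Y\r=2\on{tr}(XY)$. First I would record the relevant structure: since $\ca{N}$ is modelled fibrewise on the unipotent radical determined by $\sigma$, one has $\on{Lie}(\ca{N})=\Omega^0(\CC,Q\times_G\mf{n})$; because $[\mf{n},\mf{n}]=0$ the algebra $\on{Lie}(\ca{N})$ is abelian, and because $\mf{n}$ is nilpotent the form $\l\cdot,\cdot\r$ restricts to $0$ on $\mf{n}$. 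By the construction of \eqref{eq:central}, for $\xi_1,\xi_2\in\gau(Q)$ the bracket of any two affine-linear lifts is the affine-linear functional $A\mapsto\int_\CC\l\partial_A\xi_1,\xi_2\r$.

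Next I would evaluate this on $\on{Lie}(\ca{N})$. For $\xi_1,\xi_2\in\on{Lie}(\ca{N})$ one has $[\xi_1,\xi_2]=0$ pointwise, so by $\Ad$-invariance the $A$-dependence of $A\mapsto\int_\CC\l\partial_A\xi_1,\xi_2\r$ is $\int_\CC\l\,\cdot\,,[\xi_1,\xi_2]\r=0$; the functional is thus constant in $A$ and equals the $2$-cocycle $\int_\CC\l\partial_A\xi_1,\xi_2\r$ of the restricted extension. I claim its integrand vanishes pointwise. Working in a local frame $v$ of the line subbundle $Q\times_G\mf{n}$, isotropy gives $\l v,v\r=0$, whence $\l\partial_A v,v\r=\hh\,\d\l v,v\r=0$ (the fibre metric is $\partial_A$-parallel); writing $\xi_i=f_i\,v$ then yields $\l\partial_A\xi_1,\xi_2\r=f_1f_2\,\l\partial_A v,v\r+(\d f_1)f_2\,\l v,v\r=0$. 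Hence the cocycle vanishes and the restricted extension is trivial. Since $\on{Lie}(\ca{N})$ is abelian, the linear splittings form a torsor over $\Hom(\on{Lie}(\ca{N}),\R)$, so to speak of a \emph{canonical} trivialisation one must still single one out.

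The canonical section is provided by the moment map $\Psi\colon\A(Q)\to\on{Lie}(\ca{N})^*$ of \eqref{eq:psimap}: I would set $s(\xi)=\big(A\mapsto\l\Psi(A),\xi\r\big)$. Two checks are needed. First, that $s(\xi)$ lies in $\wh{\gau}(Q)$ and projects to $\xi$, i.e.\ that the linear part of the affine function $A\mapsto\l\Psi(A),\xi\r$ equals pairing with $\xi$; this reduces to the assertion that the linearisation of $\Psi$ is the transpose of the inclusion $\on{Lie}(\ca{N})\hra\gau(Q)$ under the pairing \eqref{eq:ds-identify}, which is precisely the defining property of $\Psi$ as the $\ca{N}$-moment map for the affine action on $\A(Q)$. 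Second, that $s$ is a homomorphism, which is immediate from the previous paragraph, since $[s(\xi_1),s(\xi_2)]$ is the vanishing cocycle while $s([\xi_1,\xi_2])=s(0)=0$.

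I expect the main obstacle to be not the vanishing of the cocycle --- a one-line consequence of the isotropy of $\mf{n}$ --- but rather the careful identification of the geometric map $\Psi$ with the restriction of the tautological dual pairing on $\A(Q)=\wh{\gau}(Q)^*_1$ to $\on{Lie}(\ca{N})^*$. It is this identification that certifies $s$ as a genuine lift and makes the resulting trivialisation canonical, in particular independent of any choice of base connection.
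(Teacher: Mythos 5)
Your proof is correct and is essentially the paper's own: the canonical trivialisation is the same distinguished lift $\xi\mapsto\big(A\mapsto\l\Psi(A),\xi\r\big)$, and the paper's (very terse) proof consists of exactly this assertion, leaving the lift property and the homomorphism property implicit, whereas you supply them — vanishing of the restricted cocycle $\int_\CC\l\partial_A\xi_1,\xi_2\r$ via isotropy of $\mf{n}$, and the identification of the linearisation of $\Psi$ with the transpose of the inclusion $\on{Lie}(\ca{N})\hra\gau(Q)$. One small caution: in the paper's logical order $\Psi$ is defined geometrically (vertical projection of $T\sigma$) and its interpretation as the $\ca{N}$-moment map is a \emph{consequence} of this proposition, so the linearisation claim should be verified directly from that geometric definition (a short computation of $\Psi(A+a)-\Psi(A)$ using that $\mf{b}\perp\mf{n}$) rather than cited as the ``defining property'' of a moment map — a point your final paragraph correctly flags as the real content.
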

\begin{proof}	
	By definition,  $\wh{\gau}(Q)$ consists of affine-linear functionals $\wh\xi\colon \A(Q)\to \R$ 
	whose underlying linear functional $\Omega^1(\CC,Q\times_G\g)\to \R$ is given by an element 
	$\xi\in \gau(Q)$
	 (via pairing and integration). For $\xi\in  \on{Lie}(\ca{N})$, the map  
	 \[ \wh{\xi}\colon \A(Q)\to \R,\ \ A\mapsto \l\Psi(A),\xi\r\] 
	 is a distinguished lift. This defines a splitting. 
\end{proof}

The inclusion $\A(Q)=\wh{\gau}(Q)^*_1\hra \wh{\gau}(Q)^*$  is the momentum map for the gauge 
action. Hence,  we may regard \eqref{eq:psimap}  as a momentum map (equivariant in the usual sense) for the action of the subgroup $\ca{N}$.  \medskip

Let us now return to the case that $\PP$ is the projectivization of $E=J^1(|\Lambda|_\CC^{-\f{1}{2}})$. The kernel of the natural map to 
$|\Lambda|_\CC^{-\f{1}{2}}$ defines a rank $1$ subbundle $\ell\subset E$, 
which in turn determines a section  $\sigma$ of $\PP$. 
From the jet sequence , we have 
\begin{equation}\label{eq:ell} \ell=T^*\CC\otimes \ |\Lambda|_\CC^{-\f{1}{2}}\cong |\Lambda|_\CC^{\f{1}{2}},\end{equation}
This gives a canonical identification 
\[ \sigma^* V\PP=E/\ell\otimes \ell^*\cong  |\Lambda|^{-\f{1}{2}}_\CC\otimes  |\Lambda|^{-\f{1}{2}}_\CC
= |\Lambda|^{-1}_\CC\cong T\CC. \]
%\footnote{For the first isomorphism, note that the group bundle $\on{GL}(E)$ acts fiberwise transitively. Hence, the vertical bundle at $[L]$ is the quotient $\mf{gl}(E)/\mf{gl}(E)_{[\ell]}$ but this is  $\Hom(\ell,E/\ell)$. }
Thus $\on{Lie}(\ca{N})^*\cong \Omega^1(\CC,T\CC)
=C^\infty(\CC)$ are just ordinary functions, and $\Psi$ becomes a $\ca{B}$-equivariant  map 
\[ \Psi\colon \A(Q)\to\on{Lie}(\ca{N})^*= C^\infty(\CC).\]

%Since the exact sequence \eqref{eq:jetsequence} is $\Diff_+(\CC)$-equivariant, the action of $\Diff_+(\CC)$ on $\PP$ (as a subgroup of $\Aut_+(Q)$) preserves $\sigma$. Hence, this action normalizes $\ca{B}$ as well as $\ca{N}$. 

\begin{theorem}[Drinfeld-Sokolov] \label{th:ds} 
	For any nonvanishing function $f\in C^\infty(\CC)$, the $\ca{N}$-action on the level set $\Psi^{-1}(f)$ is free. In particular, this applies to the constant function $f=-1$.   The inclusion 
	\begin{equation}\label{eq:dsmap} \on{Hill}(\CC)=\vir^*_1(\CC)\to \A(Q)\end{equation}
	takes values in $\Psi^{-1}(-1)$, and is a global slice for the action on this level set. This identifies 
\[ \vir^*_1(\CC)\cong \Psi^{-1}(-1)/\ca{N}.\]
\end{theorem}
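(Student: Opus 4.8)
The plan is to reduce everything to the coordinate description already set up in the proof of Theorem \ref{th:pullback}. I would take $\CC = S^1 = \R/\Z$ and trivialize $E = J^1(|\Lambda|^{-1/2}_\CC)$ by $j^1_x(u) \mapsto (u'(x), u(x))^\top$, so that $Q = S^1 \times \PSL(2,\R)$ and connections become $\mf{sl}(2,\R)$-valued functions $A = \begin{pmatrix} a & b \\ c & -a \end{pmatrix}$. Under this trivialization the line $\ell \subset E$ (the kernel of the evaluation $E \to |\Lambda|^{-1/2}_\CC$) is the first coordinate line, so the section $\sigma$ is the constant $[1:0]$. The first step is to identify the subgroups \eqref{eq:groups}: $\ca{B}$ consists of the gauge transformations fixing $[1:0]$, i.e.\ those with values in the Borel subgroup $B$ of upper-triangular matrices, while $\ca{N}$ is the subgroup acting trivially on $\sigma^* V\PP = E/\ell$, i.e.\ those with values in the unipotent radical $N = \exp(\R J_3)$. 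Correspondingly $\on{Lie}(\ca{N})$ consists of maps with values in $\mf n = \R J_3$.

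Next I would pin down $\Psi$ in these coordinates. Lifting $\sigma$ to the constant section $s = (1,0)^\top$, the vertical component of $\nabla s = A s = (a,c)^\top$ modulo $\ell$ is governed by the lower-left entry $c$; tracing through the identifications \eqref{eq:ell} and \eqref{eq:ds-identify} shows that, with the correct normalization, $\Psi(A) = c$. Verifying this normalization — in particular the sign that places the Drinfeld--Sokolov slice at the level $f = -1$ rather than $+1$ — is the one genuinely fiddly point, and is where I expect to spend most of the care; everything downstream is elementary linear algebra.

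With $\Psi(A) = c$ in hand, freeness is immediate. A gauge transformation by $n = \exp(mJ_3) = \begin{pmatrix} 1 & m \\ 0 & 1 \end{pmatrix}$ sends $A$ to $nAn^{-1} - (\d n)n^{-1}$ (the finite action consistent with the infinitesimal generator $-\partial_A\xi$ of \S\ref{subsec:prin}), under which the diagonal entry transforms as $a \mapsto a + mc$, the lower-left entry $c$ being unchanged — consistent with $\ca{N}$-invariance of the moment map $\Psi$. On $\Psi^{-1}(f)$ with $c = f$ nowhere vanishing, $a \mapsto a + mf$ can fix $A$ only if $mf \equiv 0$, hence $m \equiv 0$; so the $\ca{N}$-action is free, and in particular so for $f = -1$.

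Finally, for the slice statement I would invoke the computation \eqref{eq:Aconnection}: the connection attached to a Hill operator $L = \tfrac{d^2}{dx^2} + \T$ is $A_L = \begin{pmatrix} 0 & \T \\ -1 & 0 \end{pmatrix}$, whose lower-left entry is $-1$, so $\wh\iota$ (equivalently \eqref{eq:dsmap}) indeed lands in $\Psi^{-1}(-1)$, and the Drinfeld--Sokolov slice is exactly the set of connections with $a = 0$ on this level set. Given an arbitrary $A = \begin{pmatrix} a & b \\ -1 & -a \end{pmatrix} \in \Psi^{-1}(-1)$, the transformation law $a \mapsto a - m$ shows there is a unique $m = a$ normalizing the diagonal to zero, and the resulting connection is $\begin{pmatrix} 0 & b - a^2 - a' \\ -1 & 0 \end{pmatrix}$, i.e.\ the image of the Hill operator with potential $\T = b - a^2 - a'$. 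Thus every $\ca{N}$-orbit in $\Psi^{-1}(-1)$ meets the slice in exactly one point, giving both the global-slice property and the identification $\vir^*_1(\CC) \cong \Psi^{-1}(-1)/\ca{N}$. Since $\ca{N}$, $\sigma$, and $\Psi$ are all defined coordinate-freely, the conclusion is independent of the trivialization and parametrization chosen.
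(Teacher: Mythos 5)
Your proposal is correct and takes essentially the same route as the paper's proof: pass to the coordinate trivialization of $E$ from the proof of Theorem \ref{th:pullback}, identify $\ca{N}$ with the maps into the unipotent subgroup $N=\exp(\R J_3)$ and $\Psi$ with the lower-left entry $A_{21}$, and read off both freeness and the slice property from the explicit unipotent gauge-transformation formula. Your explicit normalization computation (unique $m=a$, yielding the potential $\T=b-a^2-a'$) merely spells out what the paper asserts more briefly, namely that the diagonal-zero locus is a global slice on each level set $\Psi^{-1}(f)$.
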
	
\begin{proof} (See, e.g., \cite{khe:inf}.)
We may assume $\CC=S^1=\R/\Z$.  Following the discussion from the proof of Theorem
\ref{th:pullback}, and using the trivialization $E\cong \CC\times \R^2$ constructed there, the quotient map $E\to E/\ell$ is projection to the second component of $\R^2$. 
Hence the subbundle $\ell$ is the subbundle of $\CC\times (\R\oplus 0)\subset \CC\times \R^2$, and $\sigma\in \Gamma(\PP)$   
is the constant section  $(1:0)$ of $S^1\times \RP(1)$. 

Let $N\subset B\subset G$ be the connected subgroups whose Lie algebras are the 
subalgebras of $\g=\mf{sl}(2,\R)$, consisting of strictly upper triangular and lower triangular matrices, respectively. 
Then $\ca{N}=C^\infty(S^1,N)\subset \ca{B}=C^\infty(S^1,B)$.  Using the metric on $\g$ to identify $\g\cong \g^*$, 
the space $\on{Lie}(\ca{N})^*$ is identified with $C^\infty(S^1,\mf{n}^-)$, where $\mf{n}^-$ are the strictly lower triangular 
matrices. The map $\Psi$ takes a connection 1-form $A\in \Omega^1(S^1,\g)\cong C^\infty(S^1,\g)$ to its lower left corner, $A_{21}$. The gauge action of $\ca{N}$ reads as 
\[ \left(\begin{array}{cc} 1& \chi\\ 0 & 1
\end{array}\right)\cdot  \left(\begin{array}{cc} A_{11}& A_{12}\\ A_{21} & A_{22}
\end{array}\right)=\left(\begin{array}{cc} A_{11}+\chi A_{21}& A_{12} -2\chi A_{11}-\chi^2 A_{21}-\chi'\\A_{21} & A_{22}-\chi A_{21}
\end{array}\right).\] 
We see  that on the subset where $A_{21}=f$ has no zeroes, the action is free. Furthermore, for any fixed $f$, the subset where the 
diagonal entries are zero serves as a global slice.  In particular, this is true for $f=-1$, proving that matrices of the form 
\eqref{eq:Aconnection}
are a global slice for the $\ca{N}$-action on $\Psi^{-1}(-1)$. 
\end{proof}
Let 
\[ \wh{\Psi}=\Psi\circ p \colon \P(Q)\to C^\infty(\CC)\] 
be the lifted map. Taking pre-images, Theorem \ref{th:ds} implies that the embedding  $\wh{\iota}\colon \DC\hra \P(Q)$ is a global slice for the $\ca{N}$-action on 
\[\wh{\Psi}^{-1}(-1)\subset \P(Q).\]
For $\xi\in \on{Lie}(\ca{N})$, we have that $\iota(\xi_{\P(Q)})\varpi_\P=-\l \d \wh{\Psi},\xi\r$ as a consequence of the momentum map 
property of $\varpi_P$ (Theorem \ref{th:varpiforloopgroups}). The pullback of this expression to $\wh{\Psi}^{-1}(-1)$ vanishes; hence $i_{\wh{\Psi}^{-1}(-1)}^*\varpi_\P$ is   
$\ca{N}$-basic. Together with $\wh{\iota}^*\varpi_\P=\varpi_\D$, this shows that $\varpi_\D$ is the reduction of $\varpi_\P$. 

Similarly, the symplectic groupoid $\ca{G}_1\rra \vir^*_1(\CC)$ may be regarded as the symplectic reduction of 
$\Gau_0(Q)\ltimes \A(Q)\rra \A(Q)$ under the action of $\ca{N}\times \ca{N}$.  This follows by the argument from the proof of Proposition \ref{prop:groupoidrestriction}, since the two groupoids are described as 
$(\P(Q)\times_{\wt{G}}\P(Q))/G$ and $(\D(\CC)\times_{\wt{G}_+}\D(\CC))/G$, respectively. 

\begin{appendix}

\section{Morita equivalence of (quasi) symplectic groupoids}\label{app:a}

\subsection{Morita equivalence of Lie groupoids}
Let $\G\rra M$ be a Lie groupoid, with $M\subset \G$ the subset of units, and with 
source and target maps denoted $\sz,\tz\colon \G\to M$. The groupoid product will be written as $g_1g_2$ for $\sz(g_1)=\tz(g_2)$. 
A left action of  $\G\rra M$ on a manifold $P$, depicted as 
\[ 
\xymatrix{ \G\ar[d]<2pt>\ar@<-2pt>[d] & P\ar[ld]^{J}\\
	M & & 	}
\] 
involves a smooth \emph{anchor map} $J\colon P\to M$ and a smooth \emph{action map} 
\[ \G\times_M P\to P,\ \ (g,p)\mapsto g\cdot p\]
(using the fiber product with respect to $\sz$ and $J$) 
satisfying $g_1\cdot (g_2\cdot p)=(g_1g_2)\cdot p$ and $m\cdot p=p$ for units $m\in M$. For instance, $\G$ acts on itself by 
left multiplication (here $J=\tz$), and on its units $M$ by $g\cdot m=\tz(g)$ (here $J=\on{id}_M$). 
Similarly,  right actions of $\G$ along $J\colon P\to M$
\[ 
\xymatrix{  P\ar[rd]_{J}& \G\ar[d]<2pt>\ar@<-2pt>[d]\\
	& M	}
\] 
are described by maps
$P\times_M \G\to P,\ (p,g)\mapsto p\cdot g$ (using the fiber product with respect to $J$ and $\tz$)
satisfying $(p\cdot g_1)\cdot g_2=p\cdot (g_1g_2),\ p\cdot m=p$.  A right action of $\G$ may be regarded as a left action, using the groupoid inversion to define $g\cdot p$ as $p\cdot g^{-1}$. 

\begin{example}
	Let $\G=G\ltimes M\rra M$ be the action groupoid of a (left) $G$-action on $M$. 
	Thus $\tz(g,m)=g\cdot m,\ \sz(g,m)=m$ and $(g_1,m_1)(g_2,m_2)=(g_1g_2,m_2)$ (defined when $m_1=g_2\cdot m_2$). 
	A left $\G$-action on $P$ along $J\colon P\to M$ is the same as a $G$-action on $P$ for which the map $J$ is $G$-equivariant. 
\end{example}

A left $\G$-action is called a \emph{principal left action} \cite[Chapter 5.7]{moe:fol} if the orbit space 
$B=\G\backslash P$ is a manifold, with quotient map a surjective submersion, and the map 
\[ \G\times_M P\to P\times_B P,\ 
(g,p)\mapsto (g\cdot p,p)\] is a diffeomorphism. Similarly, one defines principal right actions. 
A \emph{Morita equivalence} between Lie groupoids  $\G_1\rra M_1$ and $\G_2\rra M_2$ is described by a diagram
\begin{equation}\label{eq:moritaequivalence}
 \begin{tikzcd}
[column sep={5.5em,between origins},
row sep={4em,between origins},]
\G_1    \arrow[d,shift left]\arrow[d,shift right]   & Q \arrow [dr, "J_2"'] \arrow[dl, "J_1"]  & \G_2    \arrow[d,shift left]\arrow[d,shift right] \\
M_1 & & M_2
\end{tikzcd}
\end{equation}
where $Q$ is a manifold (the \emph{Hilsum-Skandalis bibundle}), equipped with a principal left action of $\G_1$ along $J_1$ and a principal right action of $\G_2$ along $J_2$, such that the two actions commute, $J_1$ is 
the quotient map to $M_1=Q/\G_2$, and 
 $J_2$ is the quotient map 
to $M_2=\G_1\backslash Q$.  We shall denote the $(\G_1,\G_2)$-bi-action by $\A\colon \G_1\times_{M_1}Q\times_{M_2}\G_2\to Q$. Some features:

\begin{enumerate}
\item Morita equivalence is an equivalence relation. It is \emph{reflexive}: $Q=\G$ gives a Morita equivalence from $\G$ to itself. It is \emph{symmetric}:
If $Q$ gives a Morita equivalence from $\G_1$ to $\G_2$, then the opposite bimodule $Q^\op$ (equal to $Q$ as a manifold, with 
the roles of $J_1,J_2$ interchanged, and the new bi-action $\A^\op(g_2,p,g_1)=\A(g_1^{-1},p,g_2^{-1})$) gives a  Morita equivalence from $\G_2$ to $\G_1$. It is \emph{ transitive}: given 
Morita equivalences $Q$ from $\G_1$ to $\G_2$ and $Q'$ from $\G_2$ to $\G_3$, then 
\[ Q\diamond Q'=\frac{Q\times_{M_2} Q'}{\G_2}\]
(quotient under equivalence relation $(qg_2,q')\sim (q,g_2q')$)
is a Morita equivalence from $\G_1$ to $\G_3$. 
\item A Morita equivalence identifies the `transverse geometry' of the groupoids. In particular, it
induces a homeomorphism of orbit spaces $\G_1\backslash M_1\cong \G_1\backslash Q/\G_2\cong M_2/\G_2$, isomorphisms of 
isotropy groups 
\[ (\G_1)_{x_1}\cong (\G_2)_{x_2},\] 
for $x_i=J_i(q),\ i=1,2$, and isomorphisms of their representations on the normal spaces to orbits, 
$T_{x_i}M_i/T_{x_i}(\G_i\cdot x_i)$. 
\item\label{it:b}
The groupoids $\G_1,\G_2$ may be recovered from $Q$ via 
\[ \G_1=Q\diamond Q^\op,\ \ \G_2=Q^\op\diamond Q.\]
%

%As a special case, the composition of a $\G_1$-$\G_2$ Morita bimodule $Q$ with a $\G_2$-$\G_3$ Morita bimodule $Q'$ is the $\G_1$-$\G_3$ Morita bimodule$Q\diamond Q'$. 
%\item For any  $\G_1$-$\G_2$ Morita bimodule $Q$ there is a $\G_2$-$\G_1$ Morita bimodule $Q^{\on{op}}$ obtained by `reflection' (using the groupoid inversion to switch the role of left and right actions).  One has canonical identifications as  bimodules\[ Q\diamond Q^{\on{op}}=\G_1,\ \ \ Q^{\on{op}}\diamond Q=\G_2.\]The groupoid structures on both sides are recovered by viewing the right hand sides as quotients of submersion groupoids.
\end{enumerate}

\begin{example}\label{ex:relevant}
	Let $K$ be a Lie group, and $\pi\colon P\to M$ a left principal $K$-bundle. %Denote by $P^-$ the corresponding right principal $K$-bundle. 
	Suppose $q\colon P\to N$ is an equivariant submersion onto another $K$-manifold $N$. Let $P\times_N P\rra P$ be the submersion groupoid (a subgroupoid of the pair groupoid), and $(P\times_N P)/K\rra M$ its quotient under the diagonal action of $K$. 
	We obtain an Morita equivalence with the 
	action groupoid $K\ltimes N\rra N$:
%	\[ 	\xymatrix{ (P\times_N P)/K\ar[d]<2pt>\ar@<-2pt>[d] & P\ar[rd]_q\ar[ld]^\pi& K\ltimes N\ar[d]<2pt>\ar@<-2pt>[d]\\	M & & N	}	\] 
	\[  \begin{tikzcd}
	[column sep={5.5em,between origins},
	row sep={4em,between origins},]
	(P\times_N P)/K \arrow[d,shift left]\arrow[d,shift right]
	& P \arrow[dl, "\pi"] \arrow [dr, "q"'] & K\ltimes N \arrow[d,shift left]\arrow[d,shift right]
	\\ M& & N \end{tikzcd}\]
	
	Taking $N=P$, we obtain a Morita equivalence between the action groupoid $K\ltimes P\rra P$ and the trivial groupoid $M\rra M$; 
	for $N=\pt$ we obtain a Morita equivalence between the gauge groupoid $\G(P)=(P\times P)/K\rra M$ and the group $K\rra \pt$.
	
	%	\[ 	\xymatrix{ M\ar[d]<2pt>\ar@<-2pt>[d] & P\ar[rd]\ar[ld]^\pi& K\ltimes P\ar[d]<2pt>\ar@<-2pt>[d]\\	M & & P	}	\]	Furthermore, :	\[ 	\xymatrix{ \G(P)\ar[d]<2pt>\ar@<-2pt>[d] & P\ar[rd]\ar[ld]^\pi& K\ar[d]<2pt>\ar@<-2pt>[d]\\		M & & \pt	}	\] 	Note that  $K=(P\times_M P)/\G(P)$, illustrating \eqref{it:b}. As a common generalization, s
\end{example}

\begin{example}[Morita equivalence of group actions]
	Let $Q$ be a manifold with commuting actions of Lie groups $K_1$ and $K_2$, both of which are principal actions. Let $M_1=Q/K_2$ with the induced action of $K_1$ and $M_2=Q/K_1$ with the induced action of $K_2$. Then $Q$ defines a Morita equivalence of the action groupoids $K_1\ltimes M_1\rra M_1$ and $K_2\ltimes M_2\rra M_2$.  
\end{example}

\begin{example}
	The Morita equivalence $Q$ gives rise to a 1-1 correspondence between $\G_1$-spaces and $\G_2$-spaces, as follows. 
	Given a $\G_2$-action on a manifold $P_2$, along a map $\Phi_2\colon P_2\to M_2$,  one obtains a $\G_1$-action on the manifold 
	\[ P_1=Q\diamond P_2=(Q\times_{M_2}P_2)/\G_2\]
	along the induced map $\Phi_1\colon P_1\to M_1$. 
	One recovers  $P_1=Q^-\diamond P_2$. In fact, one may understand this correspondence as a Morita equivalence of the two action groupoids 
		\[  \begin{tikzcd}
	[column sep={5.5em,between origins},
	row sep={4em,between origins},]
	\G_1\ltimes P_1 \arrow[d,shift left]\arrow[d,shift right]
	& R \arrow[dl, "f_1"] \arrow [dr, "f_2"'] &  \G_2\ltimes P_2  \arrow[d,shift left]\arrow[d,shift right]
	\\ P_1& & P_2 \end{tikzcd}\]
	with the equivalence bimodule $R=Q\times_{M_2}P_2\cong Q^\op\times_{M_1} P_1$. There is a natural map $\Psi\colon R\to Q$ inducing the maps $\Phi_i$.
	As consequences, if $p_i\in P_i$ correspond to each other in the sense that $p_i=f_i(r)$, then the normal spaces to the 
	orbits $T_{p_i}P_i/T_{p_i}(\G_i\cdot p_i)$, the stabilizer groups 
	$(\G_i)_{p_i}$, as well as their representations on the normal spaces, are identified.  Furthermore, one has  canonical identifications 
	\begin{equation}\label{eq:kernelidentification} \ker(T_{p_1}\Phi_1)\cong \ker(T_{p_2}\Phi_2),\end{equation}
	since these are identified with $\ker(T_r\Psi)$, and similarly for the cokernel.  
\end{example}

For further details and examples, we refer to \cite{hoy:lie}.

\subsection{Quasi-symplectic groupoids}
A 2-form $\omega\in \Omega^2(\G)$ on a Lie groupoid $\G\rra M$ is called  \emph{multiplicative} if 
\begin{equation}\label{eq:multiplicative} \Mult_\G^*\omega=\pr_1^*\omega+\pr_2^*\omega\end{equation}
where $\Mult_\G\colon \G^{(2)}\to \G$ is the groupoid multiplication and $\pr_1,\pr_2\colon \G^{(2)}\to \G$ are the two projections (here $\G^{(2)}$ denotes the space of composable arrows). This property implies that $\omega$ 
pulls back to $0$ on $M$ (see \cite{bur:int,xu:mom}). 
It therefore defines a bundle map 
\begin{equation}\label{eq:rhomap} \rho\colon \on{Lie}(\G)=\nu(\G,M)\to T^*M\end{equation}
such that $\l \rho([w]),\,v\r=\omega(w,v)$ for $v\in TM$ and $[w]\in \nu(\G,M)$
(represented by an element  $w\in T\G|_M$). Letting $\sigma^L,\sigma^R$ denote the left-,right-invariant vector fields defined by 
$\sigma\in \Gamma(\on{Lie}(G))$,  
\begin{equation}\label{eq:contraction}
 \iota(\sigma^L)\omega=\sz^*\rho(\sigma),\ \ \iota(\sigma^R)\omega=\tz^*\rho(\sigma).\end{equation}
%
%\footnote{Proof: write $\sigma^L_g=0_g\circ \sigma^L_{\sz(g)}$. 	Hence, 	\[\omega_g(\sigma^L_g,v_g)=\omega_g(0_g\circ \sigma^L_{\sz(g)},v_g\circ \sz(v_g))=	\omega_{\sz(g)}(\sigma^L_{\sz(g)},\sz(v_g))=\l \sz^*\rho(\sigma),v_g\r.\]	Similarly 	\[ \omega_g(\sigma^R_g,v_g)=\omega_g(\sigma^R_{\tz(g)}\circ 0_g,\tz(v_g)\circ v_g)=	\omega_{\tz(g)}(\sigma^R_{\tz(g)},\tz(v_g))=\l \tz^*\rho(\sigma),v_g\r.\]	}

The pair $(\G,\omega)$ is called a \emph{symplectic groupoid} \cite{cos:gro} if $\omega$ is a multiplicative symplectic 2-form. In this case, the manifold of units acquires a Poisson structure, and the symplectic groupoid is called its integration.  

\begin{example}\label{ex:cotangentgroupoid}
	The main example of a symplectic groupoid is the cotangent groupoid $T^*G\rra \g^*$ of a Lie group. Using left trivialization $T^*G=G\times \g^*$, the groupoid structure is that as an action groupoid for the coadjoint action; in particular, $\tz(k,\mu)=k\cdot \mu,\ \sz(k,\mu)=\mu$.  
	The 2-form is given by the canonical symplectic form of the cotangent bundle:
	%\footnote{Our sign conventions differ from those of \cite{CFM}.}
	%
	\begin{equation}\label{eq:cotangentbundle} \omega=-\d\l\mu,\theta^L\r
	%=-\l\d\mu,\theta^L\r+\hh \l\mu,[\theta^L,\theta^L]\r.
	\end{equation}
	with the left-invariant Maurer-Cartan form $\theta^L\in \Omega^1(G,\g)$. 
	The induced Poisson structure on $\g^*$ is the standard Kirillov Poisson structure. 
\end{example}

\emph{Quasi-symplectic groupoids} \cite{bur:int,lau:qua,xu:mom} (called \emph{twisted pre-symplectic groupoids} in \cite{bur:int}) are a generalization of symplectic groupoids. Here $\omega$ need not be closed, but instead should satisfy 
 \begin{equation}\label{eq:quasiclosed} \d\omega=\sz^*\eta-\tz^*\eta\end{equation}
 for a given closed 3-form $\eta\in \Omega^3(M)$. As a non-degeneracy condition, one requires that the map  
  \[ (\a,\rho)\colon \on{Lie}(\G)\to TM\oplus T^*M\]
given by the anchor $\a$ the map  \eqref{eq:rhomap}, is an inclusion as a Lagrangian (i.e., maximal isotropic) subbundle. 
 %(In  finite dimensions, this is equivalent to $\ker(\a)\cap\ker(\rho)=0$ together with $\dim\G=2\dim M$.)
 In this case, the range of $(\a,\rho)$ defines an $\eta$-twisted Dirac structure on $M$, with $(\G,\omega)$ its integration in the sense 
 of \cite{bur:int}.
 	
 \begin{remark}
 By \cite[Corollary 3.4]{bur:int}, a multiplicative 2-form $\omega$ on a source-connected 
  Lie groupoid $\G\rra M$, satisfying  \eqref{eq:quasiclosed}  for a given closed 3-form $\eta$, is  uniquely determined by its values along the units.  (This is not true for source-disconnected groupoids, in general.) 
  Since $\omega$ pulls back to zero on $M$, it follows that $\omega$ is uniquely determined by the associated map $\rho$.
 \end{remark}
 
 \begin{remark}
 For any quasi-symplectic groupoid $(\G,\omega)$, one may define a family of  quasi-symplectic groupoids $(\G,t\omega)$
 for all $t\neq 0$, with the corresponding 3-form $t\eta$. 
 The corresponding Dirac structures are related by the automorphism $(v,\mu)\mapsto (v,t\mu)$ 
 of $TM\oplus T^*M$. 
\end{remark} 

\subsection{Examples}
Many  examples of quasi-symplectic groupoids arise as action groupoids $\G=G\ltimes M\rra M$ for some $G$-action on $M$. 
Suppose $\eta\in \Omega^3(M)$ is a $G$-invariant 3-form, and $\alpha\colon \g\to \Omega^1(M)$ a $G$-equivariant map, 
so that $X\mapsto \eta-\alpha(X)$ is equivariantly closed (in the sense of Cartan's equivariant de Rham complex). That is, 
\[ \d\eta=0,\ \ \ \iota(X_M)\eta=-\d\alpha(X),\ \ \ \iota(X_M)\alpha(X)=0.\] 
Put $\chi(X,Y)=\iota(X_M)\alpha(Y)=-\chi(Y,X)$. 
\begin{proposition}\cite[Section 6.4]{bur:int} 
	\label{prop:actiongroupoidtype}
The 2-form 	\[ \omega=-\l\alpha,\theta^L\r+\hh \chi(\theta^L,\theta^L)\in \Omega^2(G\times M)\]
is a multiplicative 2-form satisfying $\d\omega=\sz^*\eta-\tz^*\eta$. 
The map $\varrho$ (cf.~ \eqref{eq:rhomap}) coincides with $\alpha$ under the identification $\on{Lie}(\G)=M\times \g$.
Hence, $\omega$ define a quasi-symplectic groupoid if and only if the range of 
the map $M\times \g\to TM\oplus T^*M,\ (m,X)\mapsto (X_M|_m,\alpha(X)|_m)$ is Lagrangian.
\end{proposition}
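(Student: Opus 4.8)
The two substantive assertions are that $\omega$ is multiplicative and that $\d\omega=\sz^*\eta-\tz^*\eta$; the identification $\varrho=\alpha$ and the resulting Lagrangian criterion then follow quickly. Throughout I write the action groupoid as $\G=G\ltimes M$, with $\sz(g,m)=m$ and $\tz(g,m)=g\cdot m=:a(g,m)$, and I read $\theta^L$ as the pullback of the Maurer--Cartan form under the projection $G\times M\to G$. I will use the three structural hypotheses $\d\eta=0$, $\iota(X_M)\eta=-\d\alpha(X)$, $\iota(X_M)\alpha(X)=0$, together with the Maurer--Cartan equation $\d\theta^L=-\hh[\theta^L,\theta^L]$ and the cocycle identity $\Mult_G^*\theta^L=\Ad_{g_2^{-1}}\pr_1^*\theta^L+\pr_2^*\theta^L$. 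Polarizing the third hypothesis gives $\iota(X_M)\alpha(Y)+\iota(Y_M)\alpha(X)=0$, i.e. $\chi$ is antisymmetric, which is precisely what makes the correction term $\hh\chi(\theta^L,\theta^L)$ do its job.

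For multiplicativity I would parametrize the composable pairs $\G^{(2)}$ by $(g_1,g_2,m)$, so that $\pr_2=(g_2,m)$ and $\Mult=(g_1g_2,m)$ both have base point $m$, while $\pr_1=(g_1,g_2\cdot m)$ has base point $a(g_2,m)$. Writing $\theta^L_i=\pr_i^*\theta^L$, $\beta=\Ad_{g_2^{-1}}\theta^L_1$ and $\gamma=\theta^L_2$, the cocycle gives $\Mult^*\theta^L=\beta+\gamma$, so $\Mult^*\omega$ expands into diagonal pieces plus a single cross piece $\chi(\beta,\gamma)$. The only delicate pullback is $\pr_1^*\l\alpha,\theta^L\r$: since the base point of $\pr_1$ is reached by acting with $g_2$, equivariance of $\alpha$ yields $a^*\alpha(X)=\alpha(\Ad_{g_2^{-1}}X)+\chi(\gamma,\Ad_{g_2^{-1}}X)$, where the second, $\theta^L_2$-valued, summand comes from the generating-vector-field part of $Ta$. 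Feeding this in produces exactly the cross piece $\chi(\gamma,\beta)$, and antisymmetry of $\chi$ gives $\chi(\gamma,\beta)=\chi(\beta,\gamma)$, so that $\Mult^*\omega-\pr_1^*\omega-\pr_2^*\omega=0$. (The overall sign depends on the convention for generating vector fields and for $\l\cdot,\cdot\r$; the point is that the factor $\hh$ in front of $\chi(\theta^L,\theta^L)$ is dictated by this matching.)

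For the exterior derivative the plan is to compute both sides and compare them according to the number of $\theta^L$-legs. On one side, Leibniz together with the Maurer--Cartan equation gives $\d\omega=-\l\d\alpha,\theta^L\r\pm\hh\l\alpha,[\theta^L,\theta^L]\r+\hh\,\d\chi(\theta^L,\theta^L)+\hh\chi\,\d(\theta^L\wedge\theta^L)$, whose summands carry one, two, two, and three $\theta^L$-legs respectively. On the other side $\sz^*\eta=\eta$ has no $\theta^L$-leg, while $\tz^*\eta=a^*\eta$ I would expand using $Ta(\delta g,\delta m)=(\Phi_g)_*(\delta m+\xi_M)$ with $\xi=\theta^L(\delta g)$ and the $G$-invariance $\Phi_g^*\eta=\eta$; this writes $a^*\eta=\eta+(\text{terms with }1,2,3\text{ factors }\xi_M)$. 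The one-leg terms match after substituting $\iota(X_M)\eta=-\d\alpha(X)$; the two-leg terms match after using, in addition, Cartan's formula $\L_{X_M}\alpha(Y)=\iota(X_M)\d\alpha(Y)+\d\chi(X,Y)$ and the equivariance of $\alpha$ (which evaluates $\L_{X_M}\alpha(Y)$ in terms of $\alpha(\ad_X Y)$, reproducing the $\l\alpha,[\theta^L,\theta^L]\r$ term). I expect the three-leg (cubic) term to be the main obstacle: matching $\eta(X_M,Y_M,Z_M)$ with the $\hh\chi\,\d(\theta^L\wedge\theta^L)$ contribution requires contracting $\iota(X_M)\eta=-\d\alpha(X)$ once more and then invoking both the vanishing $\iota(X_M)\alpha(X)=0$ and the $\Ad$-invariance of $\eta$ (a Jacobi-type relation). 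This is where all three hypotheses are used at once, and where the bookkeeping of signs and of the antisymmetrizations is most error-prone.

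Finally, the identification $\varrho=\alpha$ is a one-line contraction at the units: by the formula $\varrho(\sigma)=i_M^*\iota(\sigma^L)\omega$ recalled in the proof of Corollary \ref{cor:symplectic}, taking the constant section $X\in\g$ of $\on{Lie}(\G)=M\times\g$ and using $\iota(X^L)\theta^L|_{(e,m)}=X$ and $\a(X)=X_M$, every term of $\iota(X^L)\omega$ that survives restriction to $TM$ kills the $\theta^L$-legs, leaving $i_M^*\iota(X^L)\omega=\alpha(X)$. Hence the structure map $(\a,\varrho)\colon\on{Lie}(\G)\to TM\oplus T^*M$ is precisely $(m,X)\mapsto(X_M|_m,\alpha(X)|_m)$, and by the definition of a quasi-symplectic groupoid recalled in the appendix (namely that $(\a,\rho)$ be an inclusion as a Lagrangian subbundle) the pair $(\G,\omega)$ is quasi-symplectic exactly when the range of this map is Lagrangian in $TM\oplus T^*M$, which is the asserted criterion.
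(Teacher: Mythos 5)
The paper does not prove this proposition at all --- it is quoted from \cite[Section 6.4]{bur:int} --- so your attempt has to stand on its own. Your architecture is the natural one and surely the intended one: Maurer--Cartan cocycle plus equivariance of $\alpha$ for multiplicativity, grading by the number of $\theta^L$-legs for $\d\omega=\sz^*\eta-\tz^*\eta$, and contraction at the units for $\varrho=\alpha$. Your last two paragraphs (the identification $\varrho=\alpha$ and the Lagrangian criterion) are correct and complete.

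The multiplicativity argument, however, does not close with the signs you wrote, and this cannot be delegated to a parenthetical about conventions: your displayed formulas are mutually inconsistent. Test them against the paper's Examples \ref{ex:cotangentgroupoid} and \ref{ex:centralext}, where $\alpha(X)=\l\d\mu,X\r$, $\chi(X,Y)=\l\mu,[X,Y]\r$, and $\omega=-\d\l\mu,\theta^L\r$ is the (multiplicative) canonical form on $T^*G$: a direct computation of the pullback of $\alpha(X)$ under $a(g,\mu)=g\cdot\mu$ gives
\[ a^*\alpha(X)=\alpha(\Ad_{g^{-1}}X)-\chi\big(\theta^L,\Ad_{g^{-1}}X\big), \]
with a \emph{minus} sign, equivalently $Ta(\delta g,\delta m)=(\Phi_g)_*(\delta m-\xi_M)$, $\xi=\theta^L(\delta g)$, under the conventions that make the paper's hypotheses and examples hold (these force $X_M|_m=\tfrac{d}{dt}\big|_{t=0}\exp(-tX)\cdot m$). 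With the plus sign you state, $\pr_1^*\omega$ acquires the cross term $-\chi(\gamma,\beta)$ while $\Mult^*\omega$ contributes $+\chi(\beta,\gamma)$; since $\chi(\gamma,\beta)=\chi(\beta,\gamma)$ these do not cancel but add, and one finds
\[ \Mult^*\omega-\pr_1^*\omega-\pr_2^*\omega=2\,\chi(\beta,\gamma). \]
Indeed, if your version of $Ta$ were used consistently in the hypotheses as well, the multiplicative form would be $-\l\alpha,\theta^L\r-\hh\chi(\theta^L,\theta^L)$, not the one in the statement. So fixing the generating-vector-field convention is not an afterthought; it is precisely what decides the sign of the $\hh\chi(\theta^L,\theta^L)$ term, and a correct proof must fix it once and use it consistently in $Ta$, in $\iota(X_M)\eta=-\d\alpha(X)$ and $\chi(X,Y)=\iota(X_M)\alpha(Y)$, and in the anchor.

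Second, the identity $\d\omega=\sz^*\eta-\tz^*\eta$ is not actually proved: for the three-leg term you say only that you ``expect'' it to work and that it is error-prone. Until that step is carried out you have an outline, not a proof. What is needed there is the triple contraction (signs in the convention above)
\[ \iota(Z_M)\iota(Y_M)\iota(X_M)\eta=\chi([Z,Y],X)+\chi(Y,[Z,X])-\chi(Z,[Y,X]), \]
which follows from $\iota(X_M)\eta=-\d\alpha(X)$, Cartan's formula, and the infinitesimal equivariance $\L_{Z_M}\chi(Y,X)=\chi([Z,Y],X)+\chi(Y,[Z,X])$ (itself a consequence of equivariance of $\alpha$); this must then be matched, after total antisymmetrization, against the contribution $\hh\sum_{ij}\chi_{ij}\big(\d\theta^L_i\wedge\theta^L_j-\theta^L_i\wedge\d\theta^L_j\big)$ coming from the Maurer--Cartan equation. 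Note that the ingredient here is the equivariance of $\chi$, not an additional ``$\Ad$-invariance of $\eta$'' as you guessed --- invariance of $\eta$ is already spent in writing $\tz^*\eta=a^*\eta$ via $\Phi_g^*\eta=\eta$. The bookkeeping is routine, but it is exactly where the sign slip above would propagate, so it needs to be written out.
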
\

The groupoid itself is acted upon by $G\times G$, where 
$ (g_1,g_2)\cdot (g,m)=(g_1gg_2^{-1},g_2\cdot m)$. 
The 2-form $\omega$ is invariant under this action, and its contractions with the generating vector fields are 
\[ \iota\big((X_1,X_2)_{G\ltimes M}\big)\omega=-\tz^*\alpha(X_1)+\sz^*\alpha(X_2).\]
as a special case of \eqref{eq:contraction}.

\begin{example}
	For $M=\g^*$ with the coadjoint action, the choice 
	\[\eta=0,\ \ \ \alpha(X)=\l\d \mu,X\r\] reproduces the example \ref{ex:cotangentgroupoid} of a symplectic groupoid. In this case, 
	\[ \chi(X_1,X_2)=\l\mu,[X_1,X_2]\r.\] 
\end{example}

\begin{example}\label{ex:gvalued}
Let $G$ be a Lie group with an invariant nondegenerate symmetric bilinear form 	$\l\cdot,\cdot\r$ on its Lie algebra $\g$. Then 
\[\eta=\f{1}{12}\l\theta^L,\,[\theta^L,\theta^L]\r,\ \  \alpha(X)=\hh\,\l\theta^L+\theta^R, X\r\]
defines a closed equivariant 3-form, with respect to the conjugation action of $G$ on itself. 
In this case, 
\[ \chi(X_1,X_2)=\hh \l (\Ad_g-\Ad_{g^{-1}})X_1,\, X_2\r.\] 
The resulting 2-form on the action groupoid $\G=G\ltimes G\rra G$ was introduced in \cite{al:mom} as the q-Hamiltonian 2-form on the `double'. 
The interpretation of the double as quasi-symplectic groupoid is from \cite{bur:int,xu:mom}. 
\end{example}

\begin{example}\label{ex:twistedgvalued}
Given an automorphism $\epsilon$ of $G$, one may consider the  twisted conjugation action of $G$ on itself, $a\cdot g=a\,g\,\epsilon(a)^{-1}$. 
Suppose the induced automorphism of $\g$ (still denoted by $\epsilon$) preserves the metric. 
Taking $\eta$ to be the Cartan 3-form as in Example \ref{ex:gvalued}, but using $\alpha(X)=\hh\l \theta^L,\epsilon(X)\r+\hh\l\theta^R,\,X\r$, one obtains a quasi-symplectic groupoid. This appears in the theory of twisted group-valued moment maps \cite{boa:twi,me:conv}.
\end{example}

\begin{example}\label{ex:centralext}
	Suppose we are given a central extension 
	\[ 0\to \R\to \wh{\g}\to \g\to 1,\]
	with a $G$-action on $\wh{\g}$ lifting the adjoint action. 
	Dually, we have
	\[ 0\to \g^*\to \wh{\g}^*\to \R\to 0.\]
	All level sets $\wh{\g}^*_\mathsf{c}$ (pre-image of $\mathsf{c}\in\R$) are $G$-invariant affine-linear Poisson submanifolds. 
	The action groupoid  $G\times \wh{\g}^*_\mathsf{c}\rra  \wh{\g}^*_\mathsf{c}$ for the level $\mathsf{c}$ action becomes a symplectic groupoid, for the choice 
	\[ \eta=0,\ \alpha(X)=\l\d\mu,X\r\] 
	(thus, $\alpha$ is the `tautological 1-form on the affine space $\g^*_{\mathsf{c}}$). 
	The resulting $\chi$ depend on the level $\mathsf{c}$ (since it depends on the $G$-action). 
	To write explicit formulas, it is convenient to choose a splitting of the central extension. Thus suppose 
	 $\wh{\g}=\g\oplus \R$
	with the bracket  $ [(X_1,t_1),(X_2,t_2)]=([X_1,X_2],\sigma(X_1,X_2))$,
	for a Lie algebra cocycle 
	\[ \sigma\colon\g\times \g\to \R.\] 
	The $G$-action on 
	$\wh{\g}$ reads as $g\cdot(X,t)=(\Ad_gX,\ t+\lambda_g(X))$
	for a 1-cocycle on $G$ with with values in the coadjoint representation, 
	\[\lambda\colon G\to \g^*,\ g\mapsto \lambda_g.\] 
	(The cocycle property reads $\lambda_{g_1g_2}=\lambda_{g_2}+g_2^{-1}\cdot \lambda_{g_1}$.) The splitting also identifies
	$\wh{\g}^*_{\mathsf{c}}\cong \g^*$, with the level $\mathsf{c}$ coadjoint action 
	\[g\cdot\mu=\Ad_g(\mu-\mathsf{c}\lambda_g).\]
	A calculation shows 
	$\chi(X_1,X_2)=	
	\l \mu,[X_1,X_2]\r+\mathsf{c}\,\sigma(X_1,X_2)$, 
	so that 
	\begin{equation}\label{eq:omegaformula}
	 \omega=-\l\d\mu,\theta^L\r+\hh \l \mu,[\theta^L,\theta^L]\r+\f{\mathsf{c}}{2}\,\sigma(\theta^L,\theta^L).\end{equation}
\end{example}	

\begin{remark}
	Given a Lie group extension $\wh{G}\to G$ integrating the Lie algebra extension, this symplectic groupoid may also be obtained  
	by symplectic reduction from 
	the cotangent bundle $T^*\wh{G}\rra \wh{\g}^*$. 
\end{remark}

We conclude with a simple example of a symplectic groupoid that is not an action groupoid. 

\begin{example}\label{ex:baby2}
	Recall that $T^*\R=\R^2$, with the  standard symplectic form $\omega=\d x\wedge \d y$, and groupoid structure  
	given by fiberwise addition, is the source 1-connected symplectic groupoid integrating $\R$ with the zero Poisson structure. The other source connected  symplectic groupoids integrating $\R$ with the zero Poisson structure
	are obtained quotients of $T^*\R$ by closed subgroupoids $\ca{K}\rra \R$ with discrete fibers. Thus, each $\ca{K}_x\subset \R$ is either trivial, or is a cyclic subgroup. In order for the quotient to be a manifold, it is necessary and sufficient that there is some open neighborhood of the zero section in $T^*\R$ does not contain non-trivial elements of $\ca{K}$.  The
	resulting $\G\rra \R$ is a family of Lie groups $\G_x=\R/\ca{K}_x$. 
	The symplectic form $\d x\wedge \d y$ 
	%pulls back to $0$ on $\ca{K}$ (since $\dim\ca{K}=1$), hence it 
	descends to $\G$, making the latter into a $\sz$-connected symplectic groupoid. It is an action groupoid if and only if the family of subgroups  $\ca{K}_x$ is constant. 
\end{example}

\subsection{Hamiltonian spaces}
A (left) \emph{Hamiltonian space} $(P,\omega_P)$ for a quasi-symplectic groupoid $\G\rra M$, with 2-form $\omega\in \Omega^2(\G)$ and 3-form $\eta\in \Omega^3(M)$, is given by a manifold $P$ with a 2-form $\omega_P$, equipped with a (left)  $\G$-action along a map $\Phi\colon P\to M$, such that 
\begin{enumerate}
   \item\label{it:b1} $\A^*\omega_P=\pr_\G^*\omega+\pr_P^*\omega_P$	
	\item\label{it:b2} $\d\omega_P=-\Phi^*\eta$
	\item\label{it:b3} $\ker(\omega_P)\cap \ker(T\Phi)=0$. 
\end{enumerate}
Here $\pr_\G,\pr_P$ are the 
projections from $\G\times_M P$ to $\G$ and $P$, respectively, and $\A \colon \G\times_M P\to P$ is the action map. 
We remark that \eqref{it:b1} encodes both an invariance property of $\omega_P$ and  a moment map property for $\Phi$ (which enters 
the definition of the fiber product $\G\times_MP$). 
For the case of symplectic groupoids, conditions \eqref{it:b2} and \eqref{it:b3} are equivalent to $\omega_P$ being symplectic, and one recovers the notion   
of Hamiltonian actions of symplectic groupoids due to Mikami-Weinstein \cite{mik:mom}.  The generalization to quasi-symplectic groupoids is due to Xu \cite{xu:mom}. 

\begin{remark}
The conditions imply that $\mu\in M$ is a regular value of $\Phi$ if and only if the isotropy group $\G_\mu$ acts locally freely on
the level set $\Phi^{-1}(\mu)$. The symplectic form descends to the orbifold $P_\mu=\Phi^{-1}(\mu)/\G_\mu$ (`symplectic quotient').  
See \cite[Theorem 3.18]{xu:mom}
\end{remark}

\begin{remark}
	One may similarly define a right Hamiltonian space $(P,\omega_P)$, as a manifold $P$ with a right-action of $G\rra M$ along a map 
	$\Phi\colon P\to M$ satisfying the same conditions as above, but with (b) replaced by $\d\omega_P=\Phi^*\eta$.  Using inversion, a right Hamiltonian space
	for $\G$ may be regarded as a left Hamiltonian space for the 
 quasi-symplectic groupoid $(\G,-\omega)$.
\end{remark}

\begin{example}
	$(P,\omega_P)=(\G,\omega)$ a left/right Hamiltonian space for $\G\rra M$, using the left/right action of $\G$ on itself. 	
\end{example}
\begin{example}
	Consider the case of an action groupoid \ref{prop:actiongroupoidtype}. Using the identification 
 $\G\times_M P=G\times P$, the action map $\A$ represents an ordinary $G$-action $\A\colon G\times P\to P$.  Property \eqref{it:b1} is equivalent to the condition that $\omega_P$ is $G$-invariant, with 
	\[ \iota(X_P)\omega_P=-\Phi^* \alpha(X).\]
	This shows, in particular, that $X_P|_p$ with $\alpha(X)|_p=0$ are contained in the kernel of $\omega_P$; 
	Property  \eqref{it:b3} amounts to the condition that this is the entire kernel:
	\[ \ker(\omega_P)|_p=\{X_P|_p\colon \alpha(X)=0\}.\]
\end{example}

\begin{example}
	Let us describe the Hamiltonian spaces for the symplectic groupoid $\G=T^*\R/\ca{K}$ from example \ref{ex:baby2}. Using the quotient map $T^*\R\to \G$, these are in particular Hamiltonian spaces for $T^*\R$: 
	Thus, $P$ is a symplectic manifold with a Hamiltonian $\Phi\in C^\infty(P,\R)$ generating an $\R$-action. For such a space to 
	be a $\G$-space, the subgroupoid $\ca{K}$ must act trivially. This gives the condition on the stabilizer groups, 
	\[ \ca{K}_{\Phi(p)}\subset \G_p\]
	for all $p\in P$. Conversely, if this condition is satisfied, then there is a well-defined $\G$-action given by 
	$[(x,y)]\cdot p=y\cdot p$, so that $(P,\omega_P,\Phi)$ is a Hamiltonian $\G$-space. 
\end{example}
\medskip

%To prove these claims, let $i\colon P\to G\times P,,\ p\mapsto (e,p)$. We have \[ i^*\iota(-X^R)\A^*\omega_P=i^*\A^*\iota(X_P)\omega_P=\iota(X_P)\omega_P.\]On the other hand, $i^*\iota(-X^R)\pr_P^*\omega_P=0$, while (since $\pr_\G(g,p)=(g,\Phi(p))\in \G=G\times M$)\[ i^*\iota(-X^R)\pr_\G^*\omega=i^*\iota(-X^R)(-\Phi^*\l\alpha,\theta^L\r+\hh \Phi^*\chi(\theta^L,\theta^L)=-i^*\Phi^*\l\alpha,\Ad_{g^{-1}}X)=-\Phi^*\alpha(X). \]

\subsection{Morita equivalence of quasi-symplectic groupoids }
	Let $\G_1\rra M_1$ and $\G_2\rra M_2$ be quasi-symplectic groupoids, 
with 2-forms $\omega_i\in \Omega^2(\G_i)$ and twisting 3-forms $\eta_i\in \Omega^3(M_i)$. 
\begin{definition}[Xu \cite{xu:mom}]	

A \emph{Morita equivalence} 
\begin{equation}\label{eq:moritaequivalence2}
\xymatrix{ (\G_1,\omega_1)\ar[d]<2pt>\ar@<-2pt>[d] & (Q,\varpi)\ar[rd]_{\Phi_2}\ar[ld]^{\Phi_1}& (\G_2,\omega_2)\ar[d]<2pt>\ar@<-2pt>[d]\\
	(M_1,\eta_1) & & (M_2,\eta_2)	}
\end{equation}
of quasi-symplectic groupoids is given by a Hilsum-Skandalis bimodule $Q$ of the underlying groupoids, equipped with a 2-form $\varpi\in \Omega^2(Q)$, satisfying the following conditions:
\begin{enumerate}
	\item[(i)] \[ \A^*\varpi=\pr_{\G_1}^*\omega_1+\pr_Q^*\varpi+\pr_{\G_2}^*\omega_2\] 
	where $\A\colon \G_1\times_M Q\times_M \G_2\to Q,\ (g_1,q,g_2)\mapsto g_1\cdot q\cdot g_2$ is the bi-action map. 
%	and where $\pr_{\G_1},\pr_{\G_2},\pr_Q$ are the projections from $\G_1\times_M Q\times_M \G_2$ to the respective factors. 
	\item[(ii)] \[ \d\varpi=-\Phi_1^*\eta_1+\Phi_2^*\eta_2\]
	\item[(iii)] \[ \ker(\varpi)\cap \ker(T\Phi_1)\cap \ker(T\Phi_2)=0\] 
\end{enumerate}
\end{definition}
Some features: 
\begin{enumerate}
	\item A Morita equivalence of  quasi-symplectic groupoids gives rise to a 1-1 correspondence of their Hamiltonian spaces. In fact, if 
	$(P_2,\omega_{P_2})$ is a Hamiltonian $(\G_2,\omega_2)$-space, then the 2-form $\pr_Q^*\varpi+\pr_{P_2}^*\omega_{P_2}$ on 
	$Q\times_{M_2}P_2$ descends to the quotient $P_1=Q\diamond P_2=(Q\times_{M_2}P_2)/\G_2$, 
		making the latter into a Hamiltonian space
	\[ (P_1,\omega_{P_1})=(Q,\varpi)\diamond (P_2,\omega_{P_2}).\]	
	One may recover $(P_1,\omega_{P_2})$ as $(Q^\op,-\varpi)\diamond (P_1,\omega_{P_1})$. The Hamiltonian spaces share many properties; in particular, there is an identification of symplectic quotients \cite[Corollary 4.20]{xu:mom}
	\item Morita equivalence is an equivalence relation. 
	%Reflexivity: Every quasi-symplectic groupoid $(\G,\omega)$ is Morita equivalent to itself, with $(Q,\varpi)=(\G,\omega)$ as a bimodule. 	 Symmetry: 	Given a Morita equivalence $(Q,\varpi)$ from $(\G_1,\omega_1)$ to $(\G_2,\omega_2)$, we obtain a Morita equivalence 
	%$(Q^\op,-\varpi)$ in the opposite direction, where $Q^\op$ is equal to $Q$ as a manifold, with the action $\A^\op(g_2,q,g_1)=\A(g_1,q,g_2)$. Transitivity: Given Morita equivalences $(Q,\varpi)$ from $(\G_1,\omega_1)$ to $(\G_2,\omega_2)$ and $(Q',\varpi')$ from 	$(\G_2,\omega_2)$ to $(\G_3,\omega_3)$, the 2-form $\pr_Q^*\varpi+\pr_{Q'}\varpi'\in \Omega^2(Q\times_{M_2} Q')$ 	descends to the quotient $Q\diamond Q'$, and defines a  Morita equivalence	\[(Q,\varpi)\diamond (Q',\varpi')\] 	from $(\G_1,\omega_1)$ to $(\G_3,\omega_3)$. 
   \item Given a Morita equivalence $(Q,\varpi)$ from $(\G_1,\omega_1)$ to $(\G_2,\omega_2)$, one recovers the groupoids as
   \begin{equation}\label{eq:recovery}
    (\G_1,\omega_1)=(Q,\varpi)\diamond (Q^\op,-\varpi),\ \ \ 
   (\G_2,\omega_2)=(Q^\op,-\varpi)\diamond (Q,\varpi).\end{equation}
\end{enumerate}

%\begin{example}	\begin{enumerate}	\item 	Let $G$ be a Lie group, with an invariant metric on its Lie algebra $\g$. Let $U_0\subset \g$ be an open neighborhood of $0$ on 	which the exponential map $\exp\colon \g\to G$ restricts to a diffeomorphism. Then the restriction of the quasi-symplectic groupoid $G\ltimes G\rra G$ to $U=\exp(U_0)$ is Morita equivalent to the restriction of $G\ltimes \g^*\rra \g^*$ 		to $U_0$. (Here the metric is being used to identify $\g^*\cong\g$.) 	\item 	Similarly, suppose $H\subset G$ is a closed subgroup and that $S\subset H$ is an $H$-invariant submanifold such that 	the map $G\times_H S\to G$ is a diffeomorphism onto $G\cdot S$. Then the restriction of  the quasi-symplectic groupoid 	$G\ltimes G\rra G$ to $G\cdot S$ is Morita equivalent to the restriction of $H\ltimes H\rra H$ to $S$.	\end{enumerate}\end{example}
	\end{appendix}

\bigskip

\bibliographystyle{amsplain} 
%\bibliography{../../../Biblio/ref}
\def\cprime{$'$} \def\polhk#1{\setbox0=\hbox{#1}{\ooalign{\hidewidth
			\lower1.5ex\hbox{`}\hidewidth\crcr\unhbox0}}} \def\cprime{$'$}
\def\cprime{$'$} \def\cprime{$'$} \def\cprime{$'$} \def\cprime{$'$}
\def\polhk#1{\setbox0=\hbox{#1}{\ooalign{\hidewidth
			\lower1.5ex\hbox{`}\hidewidth\crcr\unhbox0}}} \def\cprime{$'$}
\def\cprime{$'$} \def\cprime{$'$} \def\cprime{$'$} \def\cprime{$'$}
\providecommand{\bysame}{\leavevmode\hbox to3em{\hrulefill}\thinspace}
\providecommand{\MR}{\relax\ifhmode\unskip\space\fi MR }
% \MRhref is called by the amsart/book/proc definition of \MR.
\providecommand{\MRhref}[2]{%
	\href{http://www.ams.org/mathscinet-getitem?mr=#1}{#2}
}
\providecommand{\href}[2]{#2}

\end{document}